\setlist[enumerate]{parsep=0pt}
\renewcommand{\Re}{\operatorname{Re}}
\theoremstyle{plain}
\newtheorem{thm}{Theorem}[section]
\newtheorem{prop}[thm]{Proposition}
\newtheorem{lem}[thm]{Lemma}
\newtheorem{cor}[thm]{Corollary}
\theoremstyle{definition}
\newtheorem{defi}[thm]{Definition}
\newtheorem{ex}[thm]{Example}
\newtheorem{assumption}[thm]{Assumption}
\theoremstyle{remark}
\newtheorem{remark}[thm]{Remark}
\newcommand{\C}{\mathbb C}
\newcommand{\R}{\mathbb R}
\newcommand{\N}{\mathbb N}
\newcommand{\dom}{\mathrm{dom}}
\renewcommand{\i}{\operatorname{i}}
\newcommand{\e}{\mathrm{e}}
\newcommand{\dx}[1][x]{\,\mathrm{d}#1}
\newcommand{\dd}{\mathrm{d}}
\newcommand{\Real}{\operatorname{Re}}
\newcommand{\ran}{
\mathchoice%
    {\rm ran\,}% \displaystyle
    {\rm ran\,}% \textstyle
    {\rm ran}%   \scriptstyle
    {\rm ran}%   \scriptscriptstyle
}
\renewcommand{\ker}{
\mathchoice%
    {\rm ker\,}% \displaystyle
    {\rm ker\,}% \textstyle
    {\rm ker}%   \scriptstyle
    {\rm ker}%   \scriptscriptstyle
}
\newcommand{\inv}{^{-1}}
\newcommand{\ainv}{^{-\ast}}
\title{Integrated Semigroups for abstract differential-algebraic equations}
\author{Mehmet Erbay, Birgit Jacob and Timo Reis}
\date{\today}
\begin{document}
\maketitle

\begin{abstract}
We study integrated semigroups for infinite-dimensional differential-algebraic equations (DAEs) admitting a resolvent index. Building on the notion of integrated semigroups for the abstract Cauchy problem $\frac{\dd}{\dd t}x=Ax$, we extend this concept to the DAE setting. The resulting framework is used to analyze inhomogeneous DAEs and plays a central role in characterizing their solutions.
\end{abstract}

\section{Introduction} 
We consider infinite-dimensional differential-algebraic equations (short DAEs) 
\begin{equation}\label{eq:dae-introduction}
    \begin{aligned}
        \tfrac{\dd}{\dd t} Ex(t) &= Ax(t) + f(t), \quad t\in [0,\infty),\\
        x(0) &= x_0.
    \end{aligned}
\end{equation}
Here, $X$, $Z$ denote complex Banach spaces, $E$ is bounded from $X$ to $Z$, denoted by $E\in L(X,Z)$, $A\colon \dom(A)\subseteq X\to Z$ is closed and densely defined, $x_0\in X$ and $f\colon [0,\infty)\to Z$. Such DAEs arise in a variety of fields, such as boundary-coupled partial differential equations with finite-dimensional DAEs, of partial differential equations with algebraic constraints \cite{erbay_index_2024,gernandt_pseudo-resolvent_2023,reis_controllability_2008,reis_frequency_2005,thesis-reis,HankeReis2009HeatExchanger,trostorff_higher_2018}. These constraints impose additional restrictions on the dynamics and, in most cases, endow the operator 
$E$ with a nontrivial kernel, thereby complicating the search for solutions. 

In general, a solution of \eqref{eq:dae-introduction} can be explicitly constructed, provided the inhomogeneity is sufficiently smooth and the DAE satisfies appropriate regularity assumptions; that is, if the resolvent $(\lambda E-A)\inv$, for $\lambda \in \rho(E,A)\coloneqq \{\lambda \in \C \, \vert \, (\lambda E-A)\inv \in L(Z,X)\}$, grows with respect to $\lambda$ at most polynomially on a right-half plane of the complex numbers \cite{reis_frequency_2005}. 
The drawback is that, although existence is ensured, the solution must be obtained via an inverse Laplace transform, which is often a substantial obstacle. It is therefore desirable to adopt a time-domain approach.
This is the point where integrated semigroups come into play: A linear operator $A\colon \dom(A)\subseteq X\to X$ generates an $n$-times integrated semigroup (for an $n\in \N$), if
\begin{equation}\label{eq:growth-rate-A}
    \Vert (\lambda I_X-A)\inv\Vert \leq C \vert \lambda \vert^{n-1}, \quad \lambda \in \C_{\Real >\omega},
\end{equation}
for some $C>0$, $\omega\in \R$ and $\C_{\Re >\omega}=\{\lambda \in \C \, \vert \, \Re \lambda >\omega\}$ \cite[Prop.~2.3 \& Thm.~4.2]{neubrander_integrated_1988}. (For $n=0$ and $C=1$ this yields that $A$ is the generator of an analytic semigroup.) The integrated semigroup is then defined on the closure of $\ran((\lambda I_X-A)\inv)^{n+2}$, which is equal to $X$ if $A$ is densely defined \cite{oharu71}.  
Further, it is possible to show that for every initial value of $\dom(A^{n+2}) = \ran((\lambda I_X-A)\inv)^{n+2}$ the Cauchy problem $\frac{\dd}{\dd t}x = Ax$ has a classical solution, which can be described with the help of such an integrated semigroup. For further information regarding integrated semigroups for the classical Cauchy problem we recommend \cite{neubrander_integrated_1988, arendt_vector-valued_1987, arendt_vector-valued_2011, engel_one-parameter_2000, thiemea_integrated_1990}.

Note that the growth condition \eqref{eq:growth-rate-A} coincides with the \textit{complex resolvent index}, which is in \cite{erbay_index_2024}  defined for the DAE case. 
In fact, the index of a DAE can be defined in various ways, such as the {\em differentiation index} \cite{campbell2, campbell_index_1995, Griepentrog1992Toward, kunkel_differential-algebraic_2006, Lucht1999IndexesAS}, the {\em perturbation index} \cite{campbell_index_1995, mehrmann_index_2015}, the {\em nilpotency index} \cite{kunkel_differential-algebraic_2006}, the {\em geometric index} \cite{kunkel_differential-algebraic_2006, mehrmann_index_2015, rabier_theoretical_2002, reich_geometrical_1990}, the {\em radiality index} \cite{sviridyuk_linear_2003, jacob_solvability_2022} and the aforementioned {\em resolvent index}  \cite{gernandt_pseudo-resolvent_2023, gernandt_linear_2021, sviridyuk_linear_2003, trostorff_semigroups_2020, trostorff_higher_2018}.
Note, that in infinite dimensions all these index notions do not have to be equivalent (see \cite{erbay_index_2024}).

This raises the question to what extent the concept of integrated semigroups can be extended to DAEs, which we dealt with in this article. 
So far the generation of integrated semigroups for DAEs have been analysed in \cite{melnikova_abstract_nodate, melnikova_properties_1996}, where they studied the well-posedness of such systems for $n=1$. 
We now build upon that work and extend it by considering DAEs with inhomogeneities, which is substantial for future research and applications, such as DAEs with input and output (in particular port-Hamiltonian DAEs). 
While doing so, we make use of already known concepts and theories concerning classical integrated semigroups \cite[Ch.~3.2]{arendt_vector-valued_2011}. 

We subsequently focus on the study of \eqref{eq:dae-introduction} using pseudo-resolvents, as was done in \cite{gernandt_pseudo-resolvent_2023}. To be more precise, instead of studying the DAE \eqref{eq:dae-introduction} directly, it is possible to study for $\lambda \in \rho(E,A)$ the altered DAE
\begin{align}\label{eq:pseudo-dae-0}
    \tfrac{\dd}{\dd t}R(\lambda) w_\lambda(t) = w_\lambda(t) + f_\lambda(t),
\end{align}
where $R(\lambda)$ either denotes the \textit{left-$E$-resolvent} or the \textit{right-$E$-resolvent} of $(E,A)$. These are respectively given by the functions $\lambda\mapsto E(\lambda E-A)\inv$, $\lambda\mapsto (\lambda E-A)\inv E$ defined on the resolvent set of $(E,A)$, and with adapted initial condition and inhomogeneity (both depending on $\lambda$). 
In \cite{gernandt_pseudo-resolvent_2023}, it was shown—under an index concept slightly stronger than the resolvent index—that \(R(\lambda)\) is a resolvent on the closed subspace \(\overline{\operatorname{ran} R(\lambda)^k}\). Specifically, there exists an operator
\begin{equation*}
    A_R \colon \dom(A_R)\subseteq \overline{\operatorname{ran} R(\lambda)^k}\to \overline{\operatorname{ran} R(\lambda)^k}
\end{equation*}
such that
\begin{equation*}
    R(\lambda)\big|_{\overline{\operatorname{ran} R(\lambda)^k}} = (\lambda I - A_R)^{-1}.
\end{equation*}
Further, they have shown that this operator $A_R$ contributes directly to solving \eqref{eq:pseudo-dae-0} by generating a strongly continuous semigroup, which extends to a degenerate semigroup on the whole space. More details regarding this subject can be found in \cite{krejn_linear_1971, melnikova_abstract_nodate, favini_degenerate_1998, showalter_monotone_1997}.
In the following, we take on the same task under slightly weaker assumptions and show that $A_R$ always generates a strongly continuous semigroup on a subspace.

The paper is organized as follows. In Section \ref{section:solutions} we introduce the concept of integrated semigroups for DAEs and investigate several of their properties. 
In Section \ref{section:inhomogeneous-daes} we analyse the representations of solutions of inhomogeneous DAEs imposing various conditions on the inhomogeneity. To be more precise, we investigate the cases where the inhomogeneity maps into the images or the kernels of the right- or left-$E$-resolvent, which is often the case for a DAE with Weierstraß form. Subsequently we consider an example which demonstrates that these assumptions are generally too restrictive and provide an approach in Hilbert spaces in which these solutions can be characterized for arbitrary inhomogeneities.
In Section \ref{section:semigroup-generation} we conclude our research on integrated semigroups by showing, when they generate a strongly continuous semigroup on a subspace, as is usually the case for ordinary partial differential equations. In particular, for DAEs, this is the case when certain conditions are imposed on the kernel of $E$.

\textbf{Notation.} If not mentioned otherwise, $X$ and $Z$ denote complex Banach spaces. The norm in $X$ and $Z$ will be denoted by $\Vert \cdot\Vert_X$ and $\Vert\cdot\Vert_Z$, or simply $\Vert \cdot\Vert$, if it is clear from context. Similarly, we denote by $I_X$ and $I_Z$ the identity map in $X$ and $Z$ and write only $I$, if it is clear from context. 
For $\omega \in \R$ we set $\C_{\Real \geq \omega}\coloneqq \{\lambda \in \C \,\vert\, \Real \lambda \geq \omega\}$. 

For an interval $J\subseteq\R$ and a Banach space $X$, we write $C(J;X)$ for the space of continuous $X$-valued functions and $C^{p}(J;X)$ (with $p\in\N$) for the space of $p$-times continuously differentiable maps. Moreover, $L^{p}(J;X)$ ($1\le p\le\infty$) denotes the Lebesgue–Bochner space of $p$-integrable $X$-valued functions (all integrals are understood in the Bochner sense \cite{Diestel77}), and $H^{k}(J;X)\subseteq L^{2}(J;X)$ is the Sobolev space of $X$-valued functions whose weak derivatives up to order $k$ belong to $L^{2}(J;X)$.
By $L(X,Z)$ we denote the space of bounded linear operators mapping from $X$ to $Z$ and we abbreviate $L(X)\coloneqq L(X,X)$. 
By $\dom(A)$ we denote the domain of a (not necessary bounded) linear operator $A\colon\dom(A)\subseteq X\to Z$. In this context, $A$ is called \textit{densely defined}, if $\dom(A)$ is dense in $X$, and \textit{closed}, if the graph of $A$ 
%, i.e.~$\{(x,Ax)\in X\times Z \,\vert \, x\in \dom(A)\}$, 
is a closed subspace of $X\times Z$.

Let $E\in L(X,Z)$ and $A\colon \dom(A)\subseteq X\to Z$ be closed and densely defined. We call $(\lambda I-A)\inv $ \textit{the resolvent of $A$} for all $\lambda \in \rho(A)\coloneqq \{\lambda \in \C \, \vert \, (\lambda I-A) \text{ boundedly invertible}\}$. Further, we call $(\lambda E-A)\inv$ \textit{the (generalised) resolvent of $(E,A)$} for all $\lambda \in \rho(E,A)\coloneqq \{ \lambda \in \C \, \vert \, (\lambda E-A)\inv \text{ boundedly invertible}\}$.
For notational convenience, we define the \textit{right} and \textit{left resolvents} of $(E,A)$ by
\begin{equation*}
    R_r(\lambda) \coloneqq (\lambda E - A)^{-1}E,
    \qquad
    R_l(\lambda) \coloneqq E(\lambda E - A)^{-1}.
\end{equation*}
In the case when $X=Z$ is a Hilbert space, we call $E\in L(X)$ \textit{positive}, if $\langle Ex,x\rangle\geq 0$ for all $x\in X$ and we call $A\colon\dom(A)\subseteq X\to X$ \textit{dissipative}, if $\Real \langle Ax,x\rangle\leq 0$ for all $x\in \dom(A)$.

\section{Differential-algebraic integrated semigroups} \label{section:solutions}
\subsection{Preliminaries}\label{sec:prelim}
We start by focusing on the DAE 
\begin{equation}\label{eq:dae-0}
    \begin{aligned}
        \tfrac{\dd}{\dd t} Ex(t) &= Ax(t) + f(t), \quad t\in [0,\infty),\\
        x(0) &= x_0,
    \end{aligned}
\end{equation}
with $x_0 \in X$ and $f\colon [0,\infty)\to Z$. In this context, the operator pair $(E,A)$ has the following properties.

\begin{assumption}\label{assumption:1}\hfill
    \begin{enumerate}[label=({\alph*)}]
        \item $X$ and $Z$ are Banach spaces.
        \item $E$ is a bounded linear operator from $X$ to $Z$.
        \item $A\colon\dom(A)\subseteq X\to Z$ is closed and densely defined.
        \item The operator pair $(E,A)$ has a \textit{complex resolvent index}, i.e.~the smallest number $p_{\mathrm{res}}^{(E,A)} \in \N_0$, such that there exists a $\omega \in\R$, $C>0$ with $\C_{\Re>\omega}\subseteq \rho(E,A)$ and 
        \begin{equation}\label{eq:complex-resolvent-index}
            \Vert (\lambda E-A)^{-1} \Vert \leq C\vert\lambda\vert^{p_{\mathrm{res}}^{(E,A)}-1}\quad \text{for all }\lambda\in\C_{\Re>\omega}.
        \end{equation}     
    \end{enumerate}
\end{assumption}

% \begin{defi}[Resolvent index]\hfill\\
%     The \textit{complex resolvent index} of $(E,A)$ is the smallest number $p=p_{\mathrm{res}}^{(E,A)} \in \N_0$, such that there exists a $\omega \in\R$, $C>0$ with $\C_{\Re>\omega}\subseteq \rho(E,A)$ and 
%     \begin{equation}\label{eq:complex-resolvent-index}
%         \Vert (\lambda E-A)^{-1} \Vert \leq C\vert\lambda\vert^{p-1}\quad \text{for all $\lambda\in\C_{\Re>\omega}$.}
%     \end{equation}    
% \end{defi}

\begin{defi}[Solutions]\hfill
    \begin{enumerate}[label=({\roman*)}]
        \item We call $x\colon [0,\infty) \to X$ a \textit{classical solution} of \eqref{eq:dae-0}, if $x\in C([0,\infty); X)$, $Ex\in C^1([0,\infty); Z)$, $x(t)\in \dom(A)$, $t\geq 0$ and $x$ solves \eqref{eq:dae-0}.
        \item We call $x\colon [0,\infty) \to X$ a \textit{mild solution} of \eqref{eq:dae-0}, if $x\in L^1([0,\infty); X)$, $Ex\in C([0,\infty); Z)$, $f\in L^1([0,\infty); Z)$, $\int_0^tx(\tau)\dx[\tau]\in \dom(A)$ and
        \begin{equation}\label{eq:mild-sol}
            Ex(t) - Ex(0) = A\int_0^t x(\tau)\dx[\tau] + \int_0^t f(\tau)\dx[\tau].
        \end{equation}
    \end{enumerate}
\end{defi}

The following theorem was formulated under the assumption that $X$ and $Z$ are complex Hilbert spaces. After examining its proof, it becomes clear that the result also holds in the Banach space case.

\begin{thm}\cite[Thm.~2.2 \& Rem.~2.3]{erbay_jacob_morris24}\label{thm:solutions-complex-resolvent-index}\hfill\\
    Let $(E,A)$ satisfy Assumption \ref{assumption:1}, $f=0$ and $\mu \in \rho(E,A)$.
    \begin{enumerate}[label=({\alph*)}]
        \item If $x_0 \in \ran R_r(\mu)^{p_{\mathrm{res}}^{(E,A)}+1}$, then there exist a unique mild solution $x$ of \eqref{eq:dae-0}.
        \item If $x_0 \in \ran R_r(\mu)^{p_{\mathrm{res}}^{(E,A)}+2}$, then there exist a unique classical solution $x$ of \eqref{eq:dae-0}.
    \end{enumerate}
In both cases, let $\omega\in\R$ and $C>0$ be such that \eqref{eq:complex-resolvent-index} holds with $p=p_{\mathrm{res}}$. Then, for every $\omega_0>\omega$, the solution is given by
    \begin{equation*}
        x(t)=\frac{(-1)^p}{2\pi \i} \int_{\omega_0-\i \infty}^{\omega_0+\i \infty} \e^{\lambda t} \frac{R_r(\lambda)z_0}{(\lambda-\mu)^p}\dx[\lambda], \quad t\geq 0,
    \end{equation*}
where $\mu \in \rho(E,A)\cap\C_{\Re>\omega_0}$ and $z_0\in X$ satisfies $x_0=R_r(\mu)^p z_0$, with $p=p_{\mathrm{res}}^{(E,A)}+1$ for the mild solution and $p=p_{\mathrm{res}}^{(E,A)}+2$ for the classical solution, respectively.\end{thm}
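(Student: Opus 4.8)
The plan is to reconstruct the solution from its Laplace transform. Formally transforming \eqref{eq:dae-0} with $f=0$ gives $(\lambda E-A)\hat{x}(\lambda)=Ex_0$, hence $\hat{x}(\lambda)=R_r(\lambda)x_0$, so one is tempted to set $x(t)=\frac{1}{2\pi\i}\int_{\omega_0-\i\infty}^{\omega_0+\i\infty}\e^{\lambda t}R_r(\lambda)x_0\dx[\lambda]$. The obstruction is that \eqref{eq:complex-resolvent-index} only yields $\Vert R_r(\lambda)\Vert\leq C\Vert E\Vert\,\vert\lambda\vert^{p_{\mathrm{res}}-1}$, so this integral need not converge. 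First I would record the pseudo-resolvent identity
\begin{equation*}
    R_r(\lambda)-R_r(\mu)=(\mu-\lambda)R_r(\lambda)R_r(\mu),
\end{equation*}
which follows from the generalised resolvent identity for $(\lambda E-A)^{-1}$ by right-multiplication with $E$; it forces the family $R_r(\cdot)$ to commute and the subspaces $\ran R_r(\mu)^k$ to be independent of $\mu\in\rho(E,A)$, so that the representation used in the formula is available for any admissible $\mu$.

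From this identity a short induction yields the power-reduction formula
\begin{equation*}
    R_r(\lambda)R_r(\mu)^p=\frac{R_r(\lambda)}{(\mu-\lambda)^p}-\sum_{k=1}^{p}\frac{R_r(\mu)^k}{(\mu-\lambda)^{p-k+1}}.
\end{equation*}
Writing $x_0=R_r(\mu)^p z_0$ with $\mu\in\rho(E,A)\cap\C_{\Re>\omega_0}$, this splits $R_r(\lambda)x_0$ into the term $R_r(\lambda)z_0/(\mu-\lambda)^p$ and finitely many terms that are constant in $\lambda$ and whose only singularity lies at $\lambda=\mu$. Since $\Vert R_r(\lambda)z_0/(\mu-\lambda)^p\Vert=O(\vert\lambda\vert^{p_{\mathrm{res}}-1-p})$, the choice $p=p_{\mathrm{res}}+1$ makes the Bromwich integral of this term absolutely convergent (decay $O(\vert\lambda\vert^{-2})$), whereas $p=p_{\mathrm{res}}+2$ gives $O(\vert\lambda\vert^{-3})$ and permits differentiation under the integral. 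Because $\mu$ lies to the right of the contour $\Re\lambda=\omega_0$, closing the remaining constant terms to the left—where $\e^{\lambda t}$ decays for $t>0$—and invoking Jordan's lemma shows they contribute nothing. This is exactly how the stated formula emerges, with $(\lambda-\mu)^p=(-1)^p(\mu-\lambda)^p$ accounting for the prefactor $(-1)^p$.

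It then remains to verify that the function $x$ defined by this convergent integral is a genuine solution and is unique. For the mild case I would insert $x$ into \eqref{eq:mild-sol}: applying $E$ under the integral, integrating in $t$ (which introduces the factor $(\e^{\lambda t}-1)/\lambda$), and using $AR_r(\lambda)=\lambda ER_r(\lambda)-E$—immediate from $(\lambda E-A)R_r(\lambda)=E$—reduces \eqref{eq:mild-sol} to the vanishing of a scalar integrand whose pole at $\lambda=0$ is cancelled by the zero of $\e^{\lambda t}-1$, leaving only the pole at $\lambda=\mu$ to the right of the contour, which again disappears upon closing to the left. The classical case uses the extra power of decay to show in addition that $Ex\in C^1$ and that the equation holds pointwise. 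Uniqueness is cleanest via the Laplace transform: the difference $x$ of two mild solutions satisfies $Ex(t)=A\int_0^t x(\tau)\dx[\tau]$ with $x\in L^1$, so its transform obeys $(\lambda E-A)\hat{x}(\lambda)=0$ for $\Re\lambda>\omega$; invertibility of $\lambda E-A$ forces $\hat{x}\equiv0$, and injectivity of the Laplace transform gives $x=0$.

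The step I expect to be the main obstacle is the verification that $x$ solves \eqref{eq:mild-sol}, and in particular the interchange of the closed, unbounded operator $A$ with the Bochner integral; this is justified by the closedness of $A$ together with the absolute convergence secured by the growth bound, but it must be carried out with care, as must the operator-valued contour-shifting arguments that dispose of the terms singular only at $\lambda=\mu$.
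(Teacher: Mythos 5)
Your construction is the right one and, for the existence part, it is essentially the argument behind the result (which this paper only imports by citation; the solution formula in the statement is exactly the regularized Bromwich integral you arrive at). The pseudo-resolvent identity, the power-reduction formula, the $O(\vert\lambda\vert^{-2})$ decay for $p=p_{\mathrm{res}}^{(E,A)}+1$, the independence of $\ran R_r(\mu)^k$ from $\mu$, and the scheme for verifying \eqref{eq:mild-sol} via $AR_r(\lambda)=\lambda ER_r(\lambda)-E$, closedness of $A$, and contour closing are all correct. But one step of the existence proof is missing: you never verify the initial condition. Your verification of \eqref{eq:mild-sol} identifies $Ex(t)-Ex(0)$ with $A\int_0^t x(\tau)\dx[\tau]$, where $x(0)$ is whatever value your integral happens to take at $t=0$; nothing shows $x(0)=x_0$, which is part of \eqref{eq:dae-0}. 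This is fillable with tools you already have: at $t=0$ close the contour to the \emph{right} (the integrand is $O(\vert\lambda\vert^{-2})$ and its only singularity in $\C_{\Re>\omega}$ is the pole at $\mu$), and evaluate the residue using the pseudo-resolvent derivative identity $\tfrac{\dd^{p-1}}{\dd\lambda^{p-1}}R_r(\lambda)=(-1)^{p-1}(p-1)!\,R_r(\lambda)^{p}$, which yields $x(0)=R_r(\mu)^p z_0=x_0$. As written, however, this is a gap.

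More seriously, the uniqueness argument does not cover part (b). A classical solution carries no integrability or growth hypothesis whatsoever ($x$ continuous, $Ex\in C^1$, nothing more), so the Laplace transform of the difference of two classical solutions need not exist on any right half-plane, and the step ``$(\lambda E-A)\hat{x}(\lambda)=0$ for $\Re\lambda>\omega$'' is simply not available. The same problem hits the mild case once the $L^1([0,\infty);X)$ condition is read in the only sensible (local) way — note that your own constructed solution satisfies $\Vert x(t)\Vert\leq C\e^{\omega_0 t}\Vert z_0\Vert$ and is therefore \emph{not} globally $L^1$, so under the literal global reading your existence and uniqueness arguments are mutually incompatible. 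The standard repair is a finite-horizon version of your computation: for a solution with $x_0=0$, $f=0$, integration by parts on $[0,T]$ and closedness of $A$ give $(\lambda E-A)\int_0^T\e^{-\lambda t}x(t)\dx[t]=-\e^{-\lambda T}Ex(T)$, hence $\bigl\Vert\int_0^T\e^{\lambda(T-t)}x(t)\dx[t]\bigr\Vert\leq C\vert\lambda\vert^{p_{\mathrm{res}}^{(E,A)}-1}\Vert Ex(T)\Vert$ for all $\lambda\in\C_{\Re>\omega}$; since $\lambda\mapsto\int_0^T\e^{\lambda(T-t)}x(t)\dx[t]$ is entire of exponential type, bounded on the imaginary and negative real directions, a Phragmén--Lindelöf/Liouville argument forces it to vanish identically, whence $x\equiv0$ on $[0,T]$ for every $T$. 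Without this (or an equivalent localization), the uniqueness claims — in particular for classical solutions — are not established.
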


\begin{remark}
    The proof of Theorem \ref{thm:solutions-complex-resolvent-index} requires that the right-resolvent grows at most polynomially and, thus, the assumption that $p^{(E,A)}_{\mathrm{res}}$ exists can be weakened to
    \begin{equation}\label{eq:complex-pseudoresolvent-index}
        \Vert (\lambda E-A)\inv E\Vert \leq C \vert \lambda \vert^{p-1}, \quad \lambda \in C_{\Re >\omega},
    \end{equation}
    for a $\omega \in \R$ and $p\in \N$.
\end{remark}

Let $p\in \N$ and let $H^p([0,\infty);Z)$ 
be the Sobolev space of $Z$-valued functions with $p$ weak derivatives in $L^2$.
Consider the space     \begin{equation}\label{eq:H0}
        H^p_{0,l}([0,\infty); Z)\coloneqq \{ f \in H^p([0,\infty); Z) \, \vert \, f(0)=\ldots= f^{(p-1)}(0) = 0\}.
    \end{equation}
\begin{thm}\cite[Thm.~2.4 \& 2.6]{thesis-reis}\label{thm:inhomog-solutions-complex-resolvent-index}\hfill\\
    Let $(E,A)$ satisfy Assumption \ref{assumption:1}.
    \begin{enumerate}[label=({\alph*)}]
        \item If $f\in H_{0,l}^{p_{\mathrm{res}}^{(E,A)}}([0,\infty); Z)$, then there exist a unique mild solution of \eqref{eq:dae-0} with $x_0=0$.
            \item If $f\in H_{0,l}^{p_{\mathrm{res}}^{(E,A)}+1}([0,\infty); Z)$, then there exist a unique classical solution of \eqref{eq:dae-0}  with $x_0=0$.
    \end{enumerate}
\end{thm}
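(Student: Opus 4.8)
The plan is to build the solution explicitly through an inverse Laplace transform, in the same spirit as Theorem~\ref{thm:solutions-complex-resolvent-index}, but with the decay needed to invert the transform now coming from the smoothness of $f$ rather than from the initial value. Writing $\widehat{\,\cdot\,}$ for the Laplace transform and transforming \eqref{eq:dae-0} with $x_0=0$ formally gives $(\lambda E-A)\widehat{x}(\lambda)=\widehat{f}(\lambda)$, hence $\widehat{x}(\lambda)=(\lambda E-A)\inv\widehat{f}(\lambda)$ for $\lambda\in\rho(E,A)$. By \eqref{eq:complex-resolvent-index} the resolvent grows like $\vert\lambda\vert^{p_{\mathrm{res}}^{(E,A)}-1}$ along a vertical line, which is far from integrable, so the transform cannot be inverted directly. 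The membership $f\in H^{p}_{0,l}$ with vanishing traces $f(0)=\dots=f^{(p-1)}(0)=0$ is exactly what repairs this: the differentiation rule produces no boundary terms, so $\widehat{f^{(p)}}(\lambda)=\lambda^{p}\widehat{f}(\lambda)$ and therefore $\widehat{f}(\lambda)=\lambda^{-p}\widehat{f^{(p)}}(\lambda)$.

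First I would fix $\omega_0>\max\{\omega,0\}$ and set
\begin{equation*}
    x(t)=\frac{1}{2\pi\i}\int_{\omega_0-\i\infty}^{\omega_0+\i\infty}\e^{\lambda t}\,\frac{(\lambda E-A)\inv\,\widehat{f^{(p)}}(\lambda)}{\lambda^{p}}\dx[\lambda],
\end{equation*}
with $p=p_{\mathrm{res}}^{(E,A)}$ for (a) and $p=p_{\mathrm{res}}^{(E,A)}+1$ for (b), and prove absolute convergence. On $\lambda=\omega_0+\i s$ the integrand is bounded in norm by $\e^{\omega_0 t}\,C\,\vert\lambda\vert^{p_{\mathrm{res}}^{(E,A)}-1-p}\Vert\widehat{f^{(p)}}(\lambda)\Vert$, i.e.\ by $\vert\lambda\vert^{-1}$ (case (a)) or $\vert\lambda\vert^{-2}$ (case (b)) times $\Vert\widehat{f^{(p)}}(\lambda)\Vert$. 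Since $f^{(p)}\in L^{2}([0,\infty);Z)$, the map $s\mapsto\widehat{f^{(p)}}(\omega_0+\i s)$ lies in $L^{2}(\R;Z)$ by a Plancherel-type bound, so Cauchy--Schwarz against $\int_{\R}\vert\omega_0+\i s\vert^{-2}\dx[s]<\infty$ yields absolute convergence and continuity of $x$; in case (b) the surplus factor $\vert\lambda\vert^{-1}$ additionally licenses differentiating once under the integral sign.

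Next comes the verification. For (a) I would integrate the representation over $[0,t]$, use $\int_0^t\e^{\lambda\tau}\dx[\tau]=\lambda\inv(\e^{\lambda t}-1)$, pull the closed operator $A$ inside the absolutely convergent integral, and substitute the identity $A(\lambda E-A)\inv=\lambda E(\lambda E-A)\inv-\Id$; the two resulting pieces reassemble into $Ex(t)-Ex(0)$ and $-\int_0^t f(\tau)\dx[\tau]$, which is precisely \eqref{eq:mild-sol}, while $Ex(0)=0$ follows by closing the contour to the right. For (b) I would instead differentiate $Ex$ in $t$ (licensed by the $\vert\lambda\vert^{-2}$ bound) and use $\lambda E(\lambda E-A)\inv=\Id+A(\lambda E-A)\inv$ to split $\tfrac{\dd}{\dd t}Ex(t)$ into the inverse transform of $\lambda^{-p}\widehat{f^{(p)}}=\widehat{f}$, namely $f(t)$, and the term $Ax(t)$; the same decay shows $x(t)\in\dom(A)$ with $Ax$ continuous, so $x$ is a classical solution. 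Uniqueness in both cases is immediate: a homogeneous ($f=0$, $x_0=0$) solution satisfies $(\lambda E-A)\widehat{x}(\lambda)=0$ for $\Re\lambda$ large, hence $\widehat{x}\equiv0$ on a half-plane and $x\equiv0$ by injectivity of the Laplace transform.

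The main obstacle is the borderline character of the convergence estimate. At the critical smoothness the integrand decays only like $\vert\lambda\vert^{-1}$, so the polynomial resolvent bound alone is insufficient and one genuinely has to pair it with the $L^{2}$-decay of $\widehat{f^{(p)}}$; this is what forces the exponent $p_{\mathrm{res}}^{(E,A)}$ for a mild and $p_{\mathrm{res}}^{(E,A)}+1$ for a classical solution. The accompanying technical difficulty is the rigorous interchange of the unbounded closed operator $A$ and of $\tfrac{\dd}{\dd t}$ with the contour integral, together with the bookkeeping of the boundary terms guaranteed by the trace conditions in \eqref{eq:H0}; both are controlled by the improved decay rates just described.
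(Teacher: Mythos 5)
Your route is the intended one: the paper itself contains no proof of Theorem~\ref{thm:inhomog-solutions-complex-resolvent-index} but defers to \cite{thesis-reis}, and the argument there is exactly the frequency-domain construction you describe --- use the trace conditions in \eqref{eq:H0} to write $\widehat{f}(\lambda)=\lambda^{-p}\widehat{f^{(p)}}(\lambda)$ with no boundary terms, pair the polynomial resolvent bound \eqref{eq:complex-resolvent-index} with the decay of $\widehat{f^{(p)}}$, and invert the Laplace transform. Your algebraic verification (closedness of $A$ to pull it through the contour integral, the identity $A(\lambda E-A)\inv=\lambda E(\lambda E-A)\inv - I$ to reassemble \eqref{eq:mild-sol}) and your uniqueness argument via injectivity of the Laplace transform are sound.

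There is, however, one genuine gap, and it sits exactly at the step you lean on hardest: the claim that $s\mapsto\widehat{f^{(p)}}(\omega_0+\i s)$ lies in $L^{2}(\R;Z)$ ``by a Plancherel-type bound.'' The theorem is stated under Assumption~\ref{assumption:1}, where $Z$ is only a Banach space, and vector-valued Plancherel fails there: by a theorem of Kwapie\'n, the Fourier transform is bounded on $L^{2}(\R;Z)$ if and only if $Z$ is isomorphic to a Hilbert space. For general Banach $Z$, the hypothesis $f^{(p)}\in L^{2}([0,\infty);Z)$ yields only $\sup_{s}\Vert\widehat{f^{(p)}}(\omega_0+\i s)\Vert<\infty$ (Cauchy--Schwarz against $\e^{-\omega_0 t}$), plus Riemann--Lebesgue decay to $0$; neither makes a bounded factor times your borderline $\vert\lambda\vert^{-1}$ integrable, so the defining integral need not converge, and everything downstream (the interchange of $A$ with the contour integral, the differentiation under the integral in case (b)) collapses with it. Note that this is precisely the asymmetry with Theorem~\ref{thm:solutions-complex-resolvent-index}: there the integrand decays like $\vert\lambda\vert^{-2}$ in norm, a pure $L^{1}$ estimate needing no Hilbert space structure, which is why the authors' remark about dropping the Hilbert space assumption is attached to that theorem and not to this one. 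Your proof is correct in the setting of the cited source \cite{thesis-reis}, namely $X,Z$ complex Hilbert spaces; to prove the statement as formulated here you would have to either add that hypothesis explicitly or replace the Plancherel pairing by a genuinely Banach-space argument, which at this sharp smoothness level your route does not provide (demanding one extra derivative of $f$ would restore an absolute $\vert\lambda\vert^{-2}$ bound, but proves a weaker statement).
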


\subsection{Construction of differential-algebraic integrated semigroups}
Let $X$, $Z$, $E$, and $A$ be as in Assumption \ref{assumption:1}. 
% Throughout this and the following sections, we assume that $(E,A)$ admits a complex resolvent index $p_{\mathrm{res}}^{(E,A)}$, and we set $p:=p_{\mathrm{res}}^{(E,A)}+1$. 
Our goal is to develop the solution theory of DAEs and to present it using different approaches. In particular, we extend the theory of integrated semigroups in order to represent the resolvent of $(E,A)$ in a Laplace-transform-like form, which is possible whenever the complex resolvent index exists.

We start by considering the homogeneous system
\begin{equation}\label{eq:dae}
    \begin{aligned}
        \tfrac{\dd}{\dd t} Ex(t) &= Ax(t), \quad t\in [0,\infty),\\
        x(0) &= x_0.
\end{aligned}
    \end{equation}
For the standard abstract Cauchy problem (i.e.~with $X=Z$ and $E=I_X$), the solutions can be represented by a strongly continuous semigroup, provided that the operator $A$ generates such a semigroup. If this is not the case, then, provided the resolvent of $A$ is polynomially bounded on some right half-plane, the solutions can be described by an integrated semigroup, which in turn yields representation formulas for the resolvent of $A$ (see \cite{neubrander_integrated_1988,arendt_vector-valued_1987,arendt_vector-valued_2011}). 
To be more precise, as long as the resolvent of $A$ grows polynomially, it is possible to show that for every initial value in $\dom(A^k)=\ran ((\lambda I_X-A)\inv)^k$ (for a $\lambda \in \rho(A)$) there is a classical solution of the Cauchy problem (where $k\in \N$ depends on the growth rate of the resolvent) (see \cite{krejn_linear_1971, BEALS1972281}). In this case, $A$ is the generator of an integrated semigroup, i.e.~for all $\lambda \in \rho(A)\cap \C_{\Re >\omega}$ (for $\omega>0$ large enough)
\begin{equation*}
    (\lambda I_X-A)\inv x = \lambda^p \int_0^\infty \e^{-\lambda t} S(t) x \dx[t], \quad x\in X,
\end{equation*}
holds, where $(S(t))_{t\geq 0}$ is a family of bounded operators on $X$. 
% Now, for the generation of the integrated semigroup, the initial values from $\dom(A^k)$ are mapped to the $k$-times integral of the corresponding solution. If now $A$ is also densely defined, the same holds for $A^k$. So, if one now shows that these integrals of the solutions are bounded it is possible to extend the integrated semigroup to the whole space.

Now, we go back to our original problem \eqref{eq:dae}. Since we assumed the existence of the complex resolvent index throughout this section, one always finds a unique (mild) solution of \eqref{eq:dae} for all $x_0 \in \ran R_r(\lambda)^p$ by Theorem \ref{thm:solutions-complex-resolvent-index} (with $\lambda \in \rho(E,A)$). The difference to the Cauchy problem is that these initial values are not dense in the whole space $X$. In general, under our given assumptions it is not even possible to split the space into $\overline{\ran R_r(\lambda)^p}\oplus \ker R_r(\lambda)^p$, as it is possible in finite-dimensions thanks to the Weierstraß form (see \cite{kunkel_differential-algebraic_2006}). However, this is not a problem as the solutions of \eqref{eq:dae} always lie in $\overline{\ran R_r(\lambda)^{p}}$ (see \cite[Prop.~4.1\&Prop.~5.1]{trostorff_semigroups_2020}).

\begin{thm}\label{thm:1}\hfill\\
    Let $(E,A)$ satisfy Assumption \ref{assumption:1} and set $p:=p_{\mathrm{res}}^{(E,A)}+1$. Then, there exists a family of operators $(S_r(t))_{t\geq 0}$ on $X_{\ran} \coloneqq \overline{\ran R_r(\lambda)^{p}}$ 
    and $\omega\in \R$, $C>0$ with $\Vert S_r(t)\Vert \leq C\e^{\omega t}$ and
    \begin{equation}\label{eq:rechts-res}
        R_r(\lambda)x_0 = \lambda^p \int_0^\infty \e^{-\lambda t} S_r(t)x_0 \dx[t], \quad \text{ for all $\lambda \in \rho(E,A)\cap \C_{\Re >\omega}$ and $x_0\in X_{\ran}$.}
    \end{equation}
    
\end{thm}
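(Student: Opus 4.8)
The plan is to construct $(S_r(t))_{t\ge 0}$ directly as the complex (Bromwich) inverse Laplace transform of the right resolvent, in the spirit of the representation formula in Theorem~\ref{thm:solutions-complex-resolvent-index}, and then to read off \eqref{eq:rechts-res} as the assertion that the forward Laplace transform inverts this construction. Two structural facts drive the argument. First, the second resolvent identity gives, for $\lambda,\mu\in\rho(E,A)$,
\begin{equation*}
    R_r(\lambda)-R_r(\mu)=(\lambda E-A)^{-1}\bigl[(\mu E-A)-(\lambda E-A)\bigr](\mu E-A)^{-1}E=(\mu-\lambda)R_r(\lambda)R_r(\mu),
\end{equation*}
so $R_r$ is a pseudo-resolvent; in particular the $R_r(\lambda)$ commute and, as for any pseudo-resolvent, $\ran R_r(\lambda)^{p}$ and hence $X_{\ran}=\overline{\ran R_r(\lambda)^{p}}$ are independent of $\lambda$. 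Second, since $E\in L(X,Z)$, Assumption~\ref{assumption:1}(d) with $p=p_{\mathrm{res}}^{(E,A)}+1$ yields
\begin{equation*}
    \|R_r(\lambda)\|=\|(\lambda E-A)^{-1}E\|\le \|E\|\,C\,|\lambda|^{\,p_{\mathrm{res}}^{(E,A)}-1}=\|E\|\,C\,|\lambda|^{\,p-2},\qquad \lambda\in\C_{\Re>\omega},
\end{equation*}
so that the $L(X)$-valued map $g(\lambda):=\lambda^{-p}R_r(\lambda)$ is holomorphic on $\C_{\Re>\max\{\omega,0\}}$ and satisfies $\|g(\lambda)\|\le \|E\|\,C\,|\lambda|^{-2}$ there.

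Fixing $\omega_0>\max\{\omega,0\}$, I would then define
\begin{equation*}
    S_r(t)x_0:=\frac{1}{2\pi\i}\int_{\omega_0-\i\infty}^{\omega_0+\i\infty}\e^{\lambda t}\,g(\lambda)\,x_0\dx[\lambda],\qquad t\ge 0,\ x_0\in X.
\end{equation*}
The decay $\|g(\lambda)\|\le \|E\|\,C\,|\lambda|^{-2}$ makes the integrand $\sim|\lambda|^{-2}$ on the line $\Re\lambda=\omega_0$, so the Bochner integral converges absolutely and, parametrising $\lambda=\omega_0+\i s$, gives a family of bounded operators with
\begin{equation*}
    \|S_r(t)\|\le \frac{\e^{\omega_0 t}}{2\pi}\int_{-\infty}^{\infty}\|g(\omega_0+\i s)\|\dx[s]=:C'\e^{\omega_0 t}.
\end{equation*}
To see that $S_r(t)$ leaves $X_{\ran}$ invariant I would use the pseudo-resolvent structure: for $x_0\in\ran R_r(\mu)^{p}$ the integrand $\lambda^{-p}R_r(\lambda)x_0$ lies in $\ran R_r(\mu)^{p}$, since $R_r(\lambda)$ commutes with $R_r(\mu)^{p}$, whence $S_r(t)x_0\in\overline{\ran R_r(\mu)^{p}}=X_{\ran}$; by density of $\ran R_r(\mu)^{p}$ in $X_{\ran}$ together with the above bound, $S_r(t)$ restricts to a bounded operator on $X_{\ran}$ with $\|S_r(t)\|\le C'\e^{\omega_0 t}$.

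It remains to establish \eqref{eq:rechts-res}, which is exactly the statement that the Laplace transform of the so-constructed $S_r$ returns $g$. Here I would invoke the complex inversion theorem for vector-valued Laplace transforms \cite[Ch.~3.2]{arendt_vector-valued_2011}: because $g$ is holomorphic on $\C_{\Re>\omega_0}$ and decays like $|\lambda|^{-2}$ on vertical lines, the Bromwich integral defining $S_r$ is supported in $[0,\infty)$ and satisfies
\begin{equation*}
    \int_0^\infty\e^{-\lambda t}S_r(t)\dx[t]=g(\lambda)=\lambda^{-p}R_r(\lambda),\qquad \Re\lambda>\omega_0,
\end{equation*}
which upon multiplication by $\lambda^{p}$ is \eqref{eq:rechts-res} with $\omega:=\omega_0$ and $C:=C'$, noting that $\rho(E,A)\cap\C_{\Re>\omega_0}=\C_{\Re>\omega_0}$ by Assumption~\ref{assumption:1}(d). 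I expect the main obstacle to be precisely this inversion step: one must justify interchanging the $t$- and $\lambda$-integrations (a Fubini and contour-deformation argument in the Bochner setting) and verify the hypotheses of the inversion theorem for the operator-valued, pseudo-resolvent-based integrand rather than for a genuine resolvent. The secondary point requiring care is the codomain bookkeeping above, where the $\lambda$-independence of $\ran R_r(\lambda)^{p}$ and the commutation of the $R_r(\lambda)$ are what confine the construction to the subspace $X_{\ran}$.
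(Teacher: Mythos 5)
Your proof is correct, and it takes a genuinely different route from the paper's. The paper does not use a Bromwich integral in this proof at all: it writes $x_0=R_r(\mu)^pz_0$, invokes Theorem~\ref{thm:solutions-complex-resolvent-index} to obtain a mild solution $x$ of the homogeneous DAE, and defines $S_r(t)x_0$ as the $p$-th antiderivative of $x$. The $p$-fold integration there is quantitative, not cosmetic: the mild solution is only bounded by $C_0\e^{\omega_0 t}\Vert z_0\Vert$, and each integration (using that $R_r(\mu)x$ is again a mild solution) gains one factor of $R_r(\mu)$, so after $p$ steps one reaches a bound in $\Vert R_r(\mu)^pz_0\Vert=\Vert x_0\Vert$, which is what allows the continuous extension to $X_{\ran}$; the identity \eqref{eq:rechts-res} is then verified through the closedness of $A$ and integration by parts, producing along the way the relation $AS_r(t)x_0=E\bigl(\tfrac{\dd}{\dd t}S_r(t)x_0-\tfrac{t^{p-1}}{(p-1)!}x_0\bigr)$ that later results reuse. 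Your construction instead exploits only the resolvent decay: since $\Vert\lambda^{-p}R_r(\lambda)\Vert=O(\vert\lambda\vert^{-2})$ holds on the whole half-plane $\C_{\Re>\max\{\omega,0\}}$ (not merely on vertical lines, which is exactly what the contour deformation needs), the Bromwich integral converges absolutely, defines $S_r(t)$ as a bounded operator on all of $X$ with the exponential bound for free, and \eqref{eq:rechts-res} follows from Fubini plus a rightward contour push collecting the residue at the pole --- an argument that works for Bochner integrals in any Banach space, so the inversion step you flag as the main obstacle does go through. What your route buys is self-containedness (no appeal to the solution theory behind Theorem~\ref{thm:solutions-complex-resolvent-index} or to mild-solution lemmas from the literature) and an operator family defined on all of $X$ rather than only on $X_{\ran}$. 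What it gives up is the structural link to the DAE that the paper's proof establishes en route --- that $S_r(t)x_0$ is the $p$-th integral of the mild solution, that $S_r(t)x_0\in\dom(A)$ for $x_0\in\ran R_r(\mu)^p$, and that $S_r(0)=0$ --- facts on which Lemma~\ref{lem:continuity-of-int-semigroup}, Lemma~\ref{lem:properties-integrated-semigroup} and Theorem~\ref{thm:semigroup-on-a-subspace} rely; with your construction these must be recovered afterwards, most easily by uniqueness of the Laplace transform, which identifies your $S_r$ with the paper's on $X_{\ran}$.
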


\begin{proof}
    First assume that $x_0 \in \ran R_r(\mu)^p$. Then there exists some 
    $z_0\in X$ with $x_0 = R_r(\mu)^p z_0$
    By Theorem \ref{thm:solutions-complex-resolvent-index} there exists a mild solution $x$ of \eqref{eq:dae}. The solution formula in that theorem implies that, for some $\omega_0\in\R$,
    \begin{equation*}
        \Vert x(t) \Vert \leq \frac{1}{2\pi} \int_{\omega_0 -\i \infty}^{\omega_0 + \i \infty} \e^{\omega_0 t} \frac{\Vert R_r(\lambda) z_0\Vert}{\vert \lambda -\mu\vert^p}\dx[\lambda] \leq C_0 \e^{\omega_0 t} \Vert z_0\Vert\; \text{ for all $t\ge0$},
    \end{equation*}
    for some suitable $C_0>0$. 
    Using $\mu R_r(\mu) - (\mu E-A)\inv A=I_{\dom(A)}$ we obtain 
    \begin{align*}
        x_1(t) \coloneqq \int_0^t x(t_0) \dx[t_0] &= \left(\mu R_r(\mu) - (\mu E-A)\inv A\right) \int_0^t x(t_0) \dx[t_0] \\ 
        &= \mu\int_0^t R_r(\mu)x(t_0) \dx[t_0] - R_r(\mu)x(t) + R_r(\mu)x(0)\; \text{ for all $t\ge0$}.
    \end{align*}
    By \cite[Lem.~8.1]{gernandt_pseudo-resolvent_2023} $R_r(\mu)x$ is a mild solution of \eqref{eq:dae} with initial value $R_r(\mu)x_0$ and, thus, for some $\omega_1\in \R$ and $C_1 >0$,
    \begin{equation*}
        \Vert x_1(t)\Vert \leq C_1\e^{\omega_1 t}  \Vert R_r(\mu)z_0\Vert\; \text{ for all $t\ge0$}.
    \end{equation*}
    Inductively, we define 
\begin{equation}\label{eq:x_k}
        x_k(t) \coloneqq \int_0^t \frac{(t-s)^{k-1}}{(k-1)!} x(\tau)\dx[\tau] = \int_0^t \left( \int_0^{t_{p-1}} \ldots \left( \int_0^{t_{1}} x(t_0) \dx[t_0]\right) \ldots \dx[t_{p-2}] \right) \dx[t_{p-1}],
    \end{equation} 
    % for $k\in \{1,\ldots,p\}$ 
    and we obtain, for some $k\in \{1,\ldots,p\}$ and some $\omega_k \in \R$, $C_k>0$,    \begin{equation}\label{eq:a}
        \Vert x_k(t)\Vert \leq C_k \e^{\omega_k t} \Vert R_r(\mu)^kz_0\Vert  \leq C_k \e^{\omega_k t} \Vert x_0 \Vert.
    \end{equation}
     Thus, for every $t\geq 0$ the mapping
    \begin{equation*}
        S_r(t)\colon \ran R_r(\mu)^p \to X_{\ran}, \quad x_0 \mapsto x_p(t)
    \end{equation*}
    is bounded with
    \begin{equation}\label{eq:estimate-1}
        \Vert S_r(t)x_0\Vert \leq C_p \e^{\omega_p t} \Vert R_r(\mu)^p z_0\Vert =e^{\omega t} \Vert x_0\Vert,
    \end{equation}
    for an appropriate $\omega\in \R$. By \cite[Prop.~4.1 \& Prop.~5.1]{trostorff_semigroups_2020} every mild solution maps into $X_{\ran}$. Therefore, $S_r(t)$ is indeed well-defined.
    Since $\ran R_r(\mu)^p$ is obviously dense in $X_{\ran}$, we extend $S_r(t)$ to $X_{\ran}$ continuously and denote it again by $S_r(t)$.
    
    As $x$ is a mild solution of \eqref{eq:dae} one has $x_1(t)\in \dom(A)$ and $Ax_1(t) = Ex(t)-Ex_0$. Hence, $x_1$ and $Ax_1$ are both integrable and, since $A$ is closed, $x_2(t)\in \dom(A)$, $t\geq 0$, and $Ax_2(t) = A\int_0^t x_1(\tau)\dx[\tau] = \int_0^t Ax_1(\tau)\dx[\tau]$. By integrating $Ax_1 = Ex-Ex_0$ and using again the closedness of $A$ one shows $x_3(t)\in \dom(A)$, $t\geq 0$, and $Ax_3(t) = \int_0^t Ax_2(\tau)\dx[\tau]$. Inductively, $S_r(t)x_0 \in \dom(A)$, $t\geq 0$ and, by integration by parts,
    \begin{align*}
        AS_r(t) x_0 &= \int_0^t \frac{(t-\tau)^{p-2}}{(p-2)!} \underbrace{A\int_0^\tau x(\sigma)\dx[\sigma]}_{=Ex(\tau) - Ex_0}\\
        &= E\underbrace{\int_0^t\frac{(t-\tau)^{p-2}}{(p-2)!}x(\tau)\dx[\tau]}_{=\frac{\dd}{\dd t} S_r(t)x_0} -\frac{t^{p-1}}{(p-1)!} Ex_0 \\
        &= E\left(\frac{\dd}{\dd t}S_r(t)x_0 - \frac{t^{p-1}}{(p-1)!}x_0\right).
    \end{align*}
Since $A$ is closed, we obtain $\lambda^p \int_0^T \e^{-\lambda t} S_r(t)x_0\dx[t] \in \dom(A)$, $T\geq 0$, and
    \begin{align*}
        A \Big(\lambda^p \int_0^T & \e^{-\lambda t} S_r(t)x_0\dx[t]\Big) \\ 
        &= \lambda^p \int_0^T \e^{-\lambda t} AS_r(t)x_0\dx[t] \\
        &= \lambda^p \int_0^T \e^{-\lambda t} E\left(\frac{\dd}{\dd t}S_r(t)x_0 - \frac{t^{p-1}}{(p-1)!}x_0\right) \dx[t] \\
        &= \lambda^p \e^{-\lambda T} ES_r(T)x_0 + \lambda E \lambda^{p} \int_0^T \e^{-\lambda t}S_r(t)x_0 \dx[t] - \lambda^p \int_0^T \e^{-\lambda t} \frac{t^{p-1}}{(p-1)!} Ex_0 \dx[t]\\
        &\stackrel{T\to \infty}{\to} 0 + \lambda E \tilde R(\lambda)x_0 + Ex_0,
    \end{align*}
    where
    \begin{equation*}
        \tilde R(\lambda)x_0\coloneqq \lambda^p \int_0^\infty \e^{-\lambda t} S_r(t)x, \dx[t], \quad x_0\in X_{\ran},
    \end{equation*}
    which is well defined and bounded by \eqref{eq:estimate-1}.
    Hereby, we have used $S_r(0)x_0 = 0$ and integration by parts. Again, by using the closedness of $A$, we obtain $\tilde R(\lambda)x_0 \in \dom(A)$ and $A\tilde R(\lambda)x_0 = \lambda E\tilde R(\lambda)x_0 - Ex_0$. This is equivalent to $R_r(\lambda)x_0 = \tilde R(\lambda)x_0$ for all $x_0 \in \ran R_r(\mu)^p$. By using the closedness of $A$ once again, we can extend this to all $x_0\in X_{\ran}$ and obtain 
    \begin{equation*}
        R_r(\lambda) x_0 = \tilde R(\lambda)x_0 = \lambda^p \int_0^\infty \e^{-\lambda t} S_r(t) x_0\dx[t], \quad x_0\in X_{\ran}. \qedhere
    \end{equation*}
\end{proof}

\begin{remark}\hfill
Assume the hypotheses of Theorem~\ref{thm:1}.    \begin{enumerate}[label=({\roman*)}]
        \item One has $S_r(0)=0$, as $S_r(0)$ is already $0$ on $\ran R_r(\mu)^p$.        
        \item It follows from the argumentation in the above proof that $S_r(t)x_0\in \dom(A)$ for all $x_0\in \ran R_r(\mu)^p$.
        \item If $\overline{\ran R_r(\mu)^p} \cap \ker R_r(\mu)^p = \{0\}$, then $R_r(\mu)$ is a resolvent for an operator $A_0$ on $\ran R_r(\mu)^p$ and one can directly make use of the theory of integrated semigroups as seen in \cite{neubrander_integrated_1988, arendt_vector-valued_2011}. This is the case, for instances, if $(E,A)$ has a radiality index \cite{sviridyuk_linear_2003, erbay_jacob_morris24} or if 
        \begin{equation*}
            \Vert R_r(\lambda)x_0\Vert \leq \frac{M}{\lambda - \omega} \Vert x_0 \Vert, \;\text{ for all $\lambda \in (\omega, \infty)$,\; $x_0 \in \ran R(\omega)^{k-1}$.}
        \end{equation*}
        The latter condition was intensively studied in \cite{gernandt_pseudo-resolvent_2023}. 
    \end{enumerate}
\end{remark}

The next step is to show that the left-resolvent also admits a representation as the right-resolvent does in \eqref{eq:rechts-res}.

\begin{cor}\label{cor:1}\hfill\\
    Let $(E,A)$ satisfy Assumption \ref{assumption:1} and set $p:=p_{\mathrm{res}}^{(E,A)}+1$. Then, there exists a family of operators $(S_l(t))_{t\geq 0}$ on $Z_{\ran}\coloneqq \overline{\ran R_l(\lambda)^{p}}$ 
    and $\tilde \omega \in \R$, $\tilde C>0$ with $\Vert S_l(t)\Vert \leq \tilde C \e^{\tilde \omega t}$ and 
    \begin{equation}
        R_l(\lambda)z = \lambda^p \int_0^\infty \e^{-\lambda t} S_l(t)z \dx[t]\; \text{ for all $\lambda \in \rho(E,A)\cap \C_{\Re >\tilde\omega}$, $ z\in Z_{\ran}$.}
    \end{equation}
     
\end{cor}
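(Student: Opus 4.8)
The plan is to deduce Corollary~\ref{cor:1} from Theorem~\ref{thm:1} by transporting the family $(S_r(t))_{t\ge0}$ from $X_{\ran}$ to $Z_{\ran}$ through the operator $E$. The key structural fact is the intertwining relation $E R_r(\lambda)=R_l(\lambda)E$ for all $\lambda\in\rho(E,A)$, which follows at once from reading $E(\lambda E-A)\inv E$ in its two groupings; iterating it gives $E R_r(\mu)^p=R_l(\mu)^p E$, so that $E$ maps $\ran R_r(\mu)^p$ into $\ran R_l(\mu)^p$ and, by continuity, $X_{\ran}$ into $Z_{\ran}$. For $z_0=Ex_0$ with $x_0\in\ran R_r(\mu)^p$ I would define
\begin{equation*}
    S_l(t)z_0 := E\,S_r(t)x_0,
\end{equation*}
equivalently, writing $x$ for the mild solution of \eqref{eq:dae} with $x(0)=x_0$ and $x_p$ for its $p$-fold iterated integral as in \eqref{eq:x_k}, $S_l(t)z_0=Ex_p(t)$.

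The first thing to check is that $S_l(t)$ is well defined and bounded with respect to $\|z_0\|$, and not merely $\|x_0\|$. To this end I would rerun the estimates \eqref{eq:a} from the proof of Theorem~\ref{thm:1} with $y:=Ex$ in place of $x$: applying $E$ to the recursion for the iterated integrals and using $E R_r(\mu)=R_l(\mu)E$ converts every occurrence of $R_r(\mu)$ into $R_l(\mu)$ and, writing $x_0=R_r(\mu)^p\zeta$, turns the controlling vector $R_r(\mu)^k\zeta$ into $R_l(\mu)^k v_0$, where $v_0:=E\zeta$ and $z_0=Ex_0=R_l(\mu)^p v_0$. This yields $\|Ex_k(t)\|\le C_k\e^{\omega_k t}\|R_l(\mu)^k v_0\|$ and, for $k=p$, the clean bound
\begin{equation*}
    \|S_l(t)z_0\|=\|Ex_p(t)\|\le C_p\,\e^{\omega_p t}\,\|R_l(\mu)^p v_0\|=C_p\,\e^{\omega_p t}\,\|z_0\|.
\end{equation*}
Since this controls $S_l(t)z_0$ by $\|z_0\|=\|Ex_0\|$, applying it to differences of two preimages of the same $z_0$ also shows independence of the chosen $x_0$, hence well-definedness on $E(\ran R_r(\mu)^p)$.

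The representation is then immediate on this subspace: by Theorem~\ref{thm:1} and the intertwining relation,
\begin{equation*}
    \lambda^p\int_0^\infty\e^{-\lambda t}S_l(t)z_0\dx[t]=E\Big(\lambda^p\int_0^\infty\e^{-\lambda t}S_r(t)x_0\dx[t]\Big)=E R_r(\lambda)x_0=R_l(\lambda)Ex_0=R_l(\lambda)z_0.
\end{equation*}
It then remains to extend $S_l(t)$ continuously to all of $Z_{\ran}$ and to pass this identity to the limit, which is harmless once density is known, and to note that $\|R_l(\lambda)\|\le\|E\|\,\|(\lambda E-A)\inv\|$ is polynomially bounded, so that all Laplace integrals converge on some $\C_{\Re>\tilde\omega}$.

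I expect the density of $E(\ran R_r(\mu)^p)$ in $Z_{\ran}=\overline{\ran R_l(\mu)^p}$ to be the main obstacle. Using $\ran R_l(\mu)\subseteq\ran E$ (clear from $R_l(\mu)=E(\mu E-A)\inv$) together with $E R_r(\mu)^p=R_l(\mu)^p E$, one obtains
\begin{equation*}
    \ran R_l(\mu)^{p+1}\subseteq \ran\big(R_l(\mu)^p E\big)=E\big(\ran R_r(\mu)^p\big)\subseteq\ran R_l(\mu)^p,
\end{equation*}
so the required density reduces to the stabilization $\overline{\ran R_l(\mu)^{p+1}}=\overline{\ran R_l(\mu)^p}$. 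I would justify this from the existence of the complex resolvent index, with $p=p^{(E,A)}_{\mathrm{res}}+1$ exceeding the index, via the analysis of the closed ranges in \cite{trostorff_semigroups_2020} that already underlies the definition of $X_{\ran}$ and $Z_{\ran}$. As a cross-check, an alternative dual route is available: since $R_l(\lambda)^\ast=(\bar\lambda E^\ast-A^\ast)\inv E^\ast$ is the right resolvent of $(E^\ast,A^\ast)$, one may apply Theorem~\ref{thm:1} to the adjoint pair and take adjoints; this, however, works only in the reflexive (in particular Hilbert space) setting, where $A^\ast$ is again densely defined, which is why the transport argument above is preferable in general Banach spaces.
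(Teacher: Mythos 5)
Your proof is correct, but it takes a genuinely different route from the paper's. The paper proves Corollary~\ref{cor:1} by changing the operator pair rather than by transporting $S_r$ through $E$: for fixed $\mu\in\rho(E,A)$ it sets $\tilde E\coloneqq E(\mu E-A)\inv$ and $\tilde A\coloneqq A(\mu E-A)\inv$ (both bounded on $Z$), computes $(\lambda\tilde E-\tilde A)\inv=(\mu E-A)(\lambda E-A)\inv$, so that the right resolvent of $(\tilde E,\tilde A)$ is exactly $R_l(\lambda)$, and then applies Theorem~\ref{thm:1} to $(\tilde E,\tilde A)$; the resulting integrated semigroup lives on $\overline{\ran R_l(\mu)^p}=Z_{\ran}$ by construction, so no separate well-definedness or density argument is needed. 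Your transport argument instead reopens the proof of Theorem~\ref{thm:1}: you must redo the estimates to get a bound by $\Vert z_0\Vert$ (this does work, since applying $E$ and using $ER_r(\lambda)=R_l(\lambda)E$ turns every controlling vector $R_r(\mu)^k\zeta$ into $R_l(\mu)^kE\zeta$, exactly as you describe), check well-definedness on $E(\ran R_r(\mu)^p)$, and prove density there via the sandwich $\ran R_l(\mu)^{p+1}\subseteq E(\ran R_r(\mu)^p)\subseteq\ran R_l(\mu)^p$ together with the stabilization of $\overline{\ran R_l(\mu)^k}$ at $k=p$ --- the stabilization is indeed available, being the same fact from \cite{trostorff_semigroups_2020,sviridyuk_linear_2003} that the paper itself invokes elsewhere, and $R_l$ is a pseudo-resolvent with polynomial growth, so the cited result applies. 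What your route buys: it never leaves the original pair, and it delivers the intertwining $S_l(t)E=ES_r(t)$ as a by-product of the construction, a fact the paper only establishes afterwards in Lemma~\ref{lem:properties-integrated-semigroup}~\ref{lem:properties-integrated-semigroup-b}; it also sidesteps the slightly delicate claim that $(\tilde E,\tilde A)$ inherits the same complex resolvent index --- since $(\lambda\tilde E-\tilde A)\inv=I+(\mu-\lambda)R_l(\lambda)$, only the right-resolvent growth \eqref{eq:complex-pseudoresolvent-index} is literally inherited, which is what the proof of Theorem~\ref{thm:1} actually needs. What the paper's route buys: brevity, and the use of Theorem~\ref{thm:1} as a black box. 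Two minor points in your write-up: the polynomial bound on $\Vert R_l(\lambda)\Vert$ is what makes the contour-integral estimate of $\Vert Ex(t)\Vert$ work (the Laplace integrals converge already from the exponential bound on $S_l$), and your adjoint alternative is rightly discarded, as it requires reflexivity.
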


\begin{proof}
    Let $\mu\in \rho(E,A)$ and consider
    \begin{equation*}
        \frac{\dd}{\dd t} \underbrace{E(\mu E-A)\inv}_{\eqqcolon \tilde E} z(t) = \underbrace{A(\mu E-A)\inv}_{\eqqcolon \tilde A} z(t).
    \end{equation*}
    Then, for all $\lambda \in \rho(\tilde E,\tilde A)$ it holds that 
    \begin{equation*}
        (\lambda \tilde E-\tilde A)\inv = (\mu E-A)(\lambda E-A)\inv,
    \end{equation*}
    and therefore
    \begin{equation*}
        \tilde E (\lambda \tilde E-\tilde A)\inv = E(\lambda E-A)\inv = (\lambda\tilde E-\tilde A)\inv \tilde E.
    \end{equation*}
    Since $(E,A)$ has complex resolvent index $p_{\mathrm{res}}^{(E,A)}$ the same holds for $(\tilde E, \tilde A)$. Using Theorem \ref{thm:1} there exists a family $(S_l(t))_{t\geq 0}$ with $\Vert S_l(t)\Vert \leq \tilde C \e^{\tilde \omega t}$ (for an $\tilde \omega \in \R$ and $\tilde C>0$) with
    \begin{equation*}
        R_l(\lambda) z = (\lambda \tilde E-\tilde A)\inv \tilde E z = \lambda^p \int_0^\infty \e^{-\lambda t} S_l(t) z \dx[t], \quad 
        z\in \overline{\ran ((\mu  \tilde E-\tilde A)\inv \tilde E)^p} = \overline{\ran R_l(\mu)^p}.\qedhere
    \end{equation*}
\end{proof}

\begin{defi}[Integrated Semigroup]\hfill\\
    We call $(S_r(t))_{t\geq 0}$ and $(S_l(t))_{t\geq 0}$ from Theorem \ref{thm:1} and Corollary \ref{cor:1} the \textit{$p$-times integrated semigroups} of $(E,A)$ and call $(E,A)$ the \textit{generator} of $(S_r(t))_{t\geq 0}$ and $(S_l(t))_{t\geq 0}$.
\end{defi}

In the following we denote with $\omega$ the maximum of $\omega$ and $\tilde \omega$ from Theorem \ref{thm:1} and Corollary \ref{cor:1}.

\begin{lem}\label{lem:continuity-of-int-semigroup}\hfill\\
    Let $(E,A)$ satisfy Assumption \ref{assumption:1} and set $p:=p_{\mathrm{res}}^{(E,A)}+1$. For every $x_0\in X_{\ran}$ and $z_0\in Z_{\ran}$ the mappings $t\mapsto S_r(t)x_0$ and $t\mapsto S_l(t)z_0$ are continuous.
\end{lem}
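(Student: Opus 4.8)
The plan is to establish the continuity first on the dense subspace $\ran R_r(\mu)^p$, where $S_r(t)$ is given by an explicit (iterated) integral, and then to propagate it to all of $X_{\ran}$ by a density argument resting on the uniform bound $\Vert S_r(t)\Vert\le C\e^{\omega t}$ from Theorem~\ref{thm:1}. The assertion for $S_l$ will then follow at no extra cost: by the proof of Corollary~\ref{cor:1}, the family $(S_l(t))_{t\ge0}$ is exactly the right-integrated semigroup $(S_r(t))_{t\ge0}$ associated with the transformed pair $(\tilde E,\tilde A)$ (indeed $R_r^{(\tilde E,\tilde A)}(\mu)=\tilde E=R_l(\mu)$, so that $Z_{\ran}$ is the corresponding $X_{\ran}$). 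Hence it suffices to treat $S_r$.

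First I would record continuity on the dense subspace. For $x_0=R_r(\mu)^p z_0\in\ran R_r(\mu)^p$ one has $S_r(t)x_0=x_p(t)$, the $p$-fold iterated integral of the mild solution $x$ of \eqref{eq:dae}, as in \eqref{eq:x_k}. Since the proof of Theorem~\ref{thm:1} furnishes the pointwise bound $\Vert x(t)\Vert\le C_0\e^{\omega_0 t}\Vert z_0\Vert$, the function $x$ is locally bounded, hence locally Bochner-integrable. I would then argue by induction on the level of integration: $t\mapsto x_1(t)=\int_0^t x(\tau)\dx[\tau]$ is continuous (an indefinite Bochner integral is even absolutely continuous), and if $x_k$ is continuous then so is $x_{k+1}(t)=\int_0^t x_k(\tau)\dx[\tau]$, being the indefinite integral of a continuous function. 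Thus $t\mapsto S_r(t)x_0=x_p(t)$ is continuous for every $x_0$ in the dense subspace.

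To reach a general $x_0\in X_{\ran}$ I would run the standard $3\eps$ transfer. Fix $t_0\ge0$, pick $T>t_0$, and let $y\in\ran R_r(\mu)^p$. For $t\in[0,T]$ the estimate \eqref{eq:estimate-1} gives $\Vert S_r(t)\Vert\le M$ with $M:=C\e^{\omega T}$ uniformly in $t$, so that
\begin{equation*}
    \Vert S_r(t)x_0-S_r(t_0)x_0\Vert \;\le\; 2M\,\Vert x_0-y\Vert \;+\; \Vert S_r(t)y-S_r(t_0)y\Vert .
\end{equation*}
Letting $t\to t_0$ and using the continuity of $t\mapsto S_r(t)y$ established above yields $\limsup_{t\to t_0}\Vert S_r(t)x_0-S_r(t_0)x_0\Vert\le 2M\Vert x_0-y\Vert$; choosing $y$ with $\Vert x_0-y\Vert$ arbitrarily small (possible since $\ran R_r(\mu)^p$ is dense in $X_{\ran}$) proves continuity at $t_0$, and $t_0$ was arbitrary.

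The only point that genuinely needs the exponential estimate is the local uniform boundedness of $t\mapsto\Vert S_r(t)\Vert$ required in the splitting; but this is delivered directly by $\Vert S_r(t)\Vert\le C\e^{\omega t}$, so no real obstacle remains and the proof is the routine propagation of strong continuity from a dense subspace under a locally equibounded family.
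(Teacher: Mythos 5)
Your proposal is correct and takes essentially the same route as the paper: the paper's (one-sentence) proof likewise combines the exponential bound $\Vert S_r(t)\Vert\le C\e^{\omega t}$ with the fact that on the dense subspace $\ran R_r(\mu)^p$ the map $t\mapsto S_r(t)x_0$ is the $p$-th integral of the mild solution of \eqref{eq:dae}, hence continuous, and then propagates continuity by density; your reduction of the $S_l$ case to $S_r$ for the pair $(\tilde E,\tilde A)$ is exactly the mechanism of Corollary~\ref{cor:1}. You have merely written out the details (local integrability of the mild solution, the $3\eps$ argument) that the paper leaves implicit.
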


\begin{proof}
    This is a direct consequence of the fact that the integrated semigroup is exponentially bounded and for elements in the dense subset $x_0\in \ran R_r(\mu)^p$ the map $t\mapsto S_r(t)x_0$ coincides with the $p$-th integral of the mild solution of \eqref{eq:dae} with initial value $x_0$.
\end{proof}

\begin{lem}\label{lem:properties-integrated-semigroup}\hfill\\
    Let $(E,A)$ satisfy Assumption \ref{assumption:1}, set $p:=p_{\mathrm{res}}^{(E,A)}+1$ and let $(S_l(t))_{t\geq 0}$ and $(S_r(t))_{t\geq 0}$  be the $p$-times integrated semigroups of $(E,A)$.
    \begin{enumerate}[label=({\alph*)}]
        \item \label{lem:properties-integrated-semigroup-a}For all $\mu \in \rho(E,A)\cap \C_{\Re >\omega}$ and $t\geq 0$ it holds that $R_r(\mu)S_r(t) = S_r(t)R_r(\mu)$ on $X_{\ran}$ and $R_l(\mu)S_l(t) = S_l(t)R_l(\mu)$ on $Z_{\ran}$.
        
        \item \label{lem:properties-integrated-semigroup-b}For all $t \geq 0$ one has
        \begin{equation*}
          ES_r(t)x_0 = S_l(t)Ex_0, \qquad x_0 \in X_{\ran},
        \end{equation*}
        and
        \begin{equation*}
          AS_r(t)z_0 = S_l(t)Az_0, \qquad z_0 \in \ran R_r(\mu)^p.
        \end{equation*}
        
        \item \label{lem:properties-integrated-semigroup-c}$(S_r(t))_{t\geq 0}$ maps all elements from $x_0 \in \ran R_r(\mu)^p$ to the solution of the \textit{$p$-times integrated differential-algebraic equation}, i.e.
        \begin{equation} \label{eq:integrated-cp}
            \frac{\dd}{\dd t}ES_r(t) x_0 = A S_r(t) x_0 + \frac{t^{p-1}}{(p-1)!}Ex_0.
        \end{equation}
        
        \item \label{lem:properties-integrated-semigroup-d}For all $x\in X_{\ran}$ one has $\int_0^t S_r(\tau)x\dx[\tau]\in \dom(A)$ and, in particular, 
        \begin{equation*}
            A\int_0^t S_r(\tau)x\dx[\tau] = ES_r(t)x - \frac{t^p}{p!}Ex, \quad x\in X_{\ran}.
        \end{equation*} 
        
        \item \label{lem:properties-integrated-semigroup-e}One has
        \begin{equation*}
            A(\mu E-A)\inv \int_0^t S_l(\tau)z\dx[\tau] = R_l(\mu)S_l(t)z - \frac{t^p}{p!}R_l(\mu)z, \quad z\in Z_{\ran}.
        \end{equation*}
        
        \item \label{lem:properties-integrated-semigroup-f}
        One has
        \begin{align*}
            S_r(t)S_r(s) &= \int_0^t \frac{1}{(p-1)!} \left( (t-\tau)^{p-1} S_r(\tau+s) - (t+s-\tau)^{p-1} S_r(\tau) \right) \dx[\tau] \\ 
            &= \int_0^t \frac{(t-\tau)^{p-1}}{(p-1)!} \left( S_r(\tau+s)-S_r(\tau)\right) \dx[\tau] - \sum_{k=1}^{p-1}\frac{s^k}{k!} \int_0^t \frac{(t-\tau)^{p-1-k}}{(p-1-k)!} S_r(\tau)\dx[\tau], \\
            S_l(t)S_l(s) &= \int_0^t \frac{1}{(p-1)!} \left( (t-\tau)^{p-1} S_l(\tau+s) - (t+s-\tau)^{p-1} S_l(\tau) \right) \dx[\tau] \\ 
            &= \int_0^t \frac{(t-\tau)^{p-1}}{(p-1)!} \left( S_l(\tau+s)-S_l(\tau)\right) \dx[\tau] - \sum_{k=1}^{p-1}\frac{s^k}{k!} \int_0^t \frac{(t-\tau)^{p-1-k}}{(p-1-k)!} S_l(\tau)\dx[\tau]
        \end{align*}
        on $X_{\ran}$ and $Z_{\ran}$, respectively. Moreover, for all $x_0 \in X_{\ran}$ and $z_0 \in Z_{\ran}$, it holds that
        \begin{equation*}
          S_r(t)S_r(s)x_0 = S_r(s)S_r(t)x_0,
          \qquad
          S_l(t)S_l(s)z_0 = S_l(s)S_l(t)z_0.
        \end{equation*}
    \end{enumerate}
\end{lem}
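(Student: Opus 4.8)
The unifying tool throughout is uniqueness of the Laplace transform combined with the continuity of $t\mapsto S_r(t)x_0$ and $t\mapsto S_l(t)z_0$ (Lemma~\ref{lem:continuity-of-int-semigroup}): to identify two exponentially bounded, continuous functions it suffices to check that their transforms agree on a right half-plane, and by Theorem~\ref{thm:1} and Corollary~\ref{cor:1} one has $\int_0^\infty \e^{-\lambda t}S_r(t)x_0\dx[t]=\lambda^{-p}R_r(\lambda)x_0$ and $\int_0^\infty \e^{-\lambda t}S_l(t)z\dx[t]=\lambda^{-p}R_l(\lambda)z$. The algebraic backbone is the generalised resolvent identity $(\lambda E-A)^{-1}-(\mu E-A)^{-1}=-(\lambda-\mu)(\lambda E-A)^{-1}E(\mu E-A)^{-1}$, which yields $R_r(\lambda)R_r(\mu)=\tfrac{R_r(\mu)-R_r(\lambda)}{\lambda-\mu}=R_r(\mu)R_r(\lambda)$ (and likewise for $R_l$). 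For \ref{lem:properties-integrated-semigroup-a}, both $R_r(\mu)S_r(\cdot)x_0$ and $S_r(\cdot)R_r(\mu)x_0$ transform to $\lambda^{-p}R_r(\mu)R_r(\lambda)x_0=\lambda^{-p}R_r(\lambda)R_r(\mu)x_0$, so commutativity of the pseudoresolvent gives the claim after noting $R_r(\mu)X_{\ran}\subseteq X_{\ran}$; the left version is identical. For \ref{lem:properties-integrated-semigroup-b}, the transform of $ES_r(\cdot)x_0$ is $\lambda^{-p}E(\lambda E-A)^{-1}Ex_0=\lambda^{-p}R_l(\lambda)Ex_0$, matching that of $S_l(\cdot)Ex_0$ once one observes $ER_r(\mu)^p=R_l(\mu)^pE$, so that $Ex_0\in Z_{\ran}$. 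The second relation is proved the same way from $A(\lambda E-A)^{-1}Ez_0=\lambda R_l(\lambda)Ez_0-Ez_0=R_l(\lambda)Az_0$; the only delicate point is that $S_l(t)Az_0$ be defined, i.e.\ $Az_0\in Z_{\ran}$, which one verifies by writing $Az_0=\mu R_l(\mu)^pEw-R_l(\mu)^{p-1}Ew\in\ran R_l(\mu)^{p-1}$ (for $z_0=R_r(\mu)^pw$) and using that range closures stabilise at the resolvent index $p-1$.

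Parts \ref{lem:properties-integrated-semigroup-c}--\ref{lem:properties-integrated-semigroup-e} rest on the proof of Theorem~\ref{thm:1}. Statement \ref{lem:properties-integrated-semigroup-c} is the identity $AS_r(t)x_0=E\bigl(\tfrac{\dd}{\dd t}S_r(t)x_0-\tfrac{t^{p-1}}{(p-1)!}x_0\bigr)$ established there, rewritten by moving the bounded operator $E$ across $\tfrac{\dd}{\dd t}$, which is exactly \eqref{eq:integrated-cp}. For \ref{lem:properties-integrated-semigroup-d} I would integrate \eqref{eq:integrated-cp} over $[0,t]$, use $S_r(0)=0$, and pull $A$ out of the integral via its closedness — legitimate on $\ran R_r(\mu)^p$ since both $S_r(\cdot)x_0$ (values in $\dom(A)$) and $AS_r(\cdot)x_0$ are continuous, hence integrable — obtaining $A\int_0^tS_r(\tau)x_0\dx[\tau]=ES_r(t)x_0-\tfrac{t^p}{p!}Ex_0$, and then extend to all $x\in X_{\ran}$ by density together with a second appeal to closedness of $A$. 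Part \ref{lem:properties-integrated-semigroup-e} is \ref{lem:properties-integrated-semigroup-d} applied verbatim to the transformed pair $(\tilde E,\tilde A)=(E(\mu E-A)^{-1},A(\mu E-A)^{-1})$ of Corollary~\ref{cor:1}, whose integrated semigroup is $S_l$ and for which $\tilde E=R_l(\mu)$; substituting these identifications back produces the stated formula.

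The heart of the lemma is \ref{lem:properties-integrated-semigroup-f}. I would establish the first composition formula by computing the double Laplace transform (in $t$ with variable $\lambda$, in $s$ with variable $\mu$) of each side, writing $F(\lambda):=\lambda^{-p}R_r(\lambda)x_0$. The left-hand side transforms to
\[
\lambda^{-p}\mu^{-p}R_r(\lambda)R_r(\mu)x_0=\tfrac{1}{\lambda-\mu}\bigl(\lambda^{-p}F(\mu)-\mu^{-p}F(\lambda)\bigr).
\]
On the right-hand side the term containing $S_r(\tau+s)$ is a convolution in $t$, and after the substitution $r=\tau+s$ and interchanging the order of the two Bochner integrals its $s$-transform equals $\tfrac{F(\mu)-F(\lambda)}{\lambda-\mu}$; expanding $(t+s-\tau)^{p-1}$ binomially turns the other term into a finite sum of convolutions transforming to $\lambda^{-p}F(\lambda)\sum_{k=0}^{p-1}\lambda^k\mu^{-k-1}$, and summing this geometric series reproduces exactly the discrepancy $\lambda^{-p}F(\lambda)\tfrac{\lambda^p-\mu^p}{\mu^p(\lambda-\mu)}$. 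The two transforms coincide, so uniqueness plus continuity gives the identity; the second (rearranged) formula is the same statement after that binomial expansion, with $\tfrac{1}{(p-1)!}\binom{p-1}{k}=\tfrac{1}{k!(p-1-k)!}$, and commutativity $S_r(t)S_r(s)=S_r(s)S_r(t)$ follows since $\lambda^{-p}\mu^{-p}R_r(\lambda)R_r(\mu)x_0$ is symmetric under $(\lambda,t)\leftrightarrow(\mu,s)$. The left-resolvent versions are entirely analogous. I expect the main obstacle to be precisely this double-transform computation: correctly handling the shifted object $S_r(\cdot+s)$ and rigorously justifying the interchange of the two integrals on a half-plane, which is where the exponential bounds of Theorem~\ref{thm:1} enter.
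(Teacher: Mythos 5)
Your proposal is correct and follows essentially the same route as the paper: uniqueness of the (double) Laplace transform combined with the pseudo-resolvent identities for (a), (b) and (f), the identity $AS_r(t)x_0=E\bigl(\tfrac{\dd}{\dd t}S_r(t)x_0-\tfrac{t^{p-1}}{(p-1)!}x_0\bigr)$ from the proof of Theorem~\ref{thm:1} for (c), integration of \eqref{eq:integrated-cp} together with closedness of $A$ and density of $\ran R_r(\mu)^p$ for (d), and the transformed pair $(\tilde E,\tilde A)$ of Corollary~\ref{cor:1} for (e). The only deviations are cosmetic: in (d) you pull the closed operator $A$ through the integral directly, where the paper instead multiplies by the bounded operator $(\mu E-A)^{-1}$, extends by density, and applies $(\mu E-A)$ at the end; in (f) you carry out the double-transform computation that the paper delegates to \cite[Prop.~5.1]{neubrander_integrated_1988} (your bookkeeping, including the geometric-series identity, checks out); and in (b) you explicitly justify $Az_0\in Z_{\ran}$ via range-closure stabilization, a point the paper passes over silently.
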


For $X=Z$ and $E=I_X$ the equation \eqref{eq:integrated-cp} is known as the $p$-times integrated Cauchy problem (see \cite{neubrander_integrated_1988, arendt_vector-valued_2011}).

\begin{proof}\
    \begin{enumerate}[label=({\alph*)}]
        \item Let $\mu, \lambda \in \rho(E,A)\cap \C_{\Re >\omega}$ and $x_0 \in X_{\ran}$. Since $R_r(\mu) R_r(\lambda) = R_r(\lambda) R_r(\mu)$ one has
        \begin{equation*}
            \mu^p \int_0^\infty \e^{-\mu t} R_r(\lambda) S(t) x_0 \dx[t] =R_r(\lambda)R_r(\mu)x_0 = R_r(\mu)R_r(\lambda)x_0 = \mu^p \int_0^\infty \e^{-\mu t} S(t) R_r(\lambda)x_0 \dx[t].
        \end{equation*}
        Then the uniqueness of the Laplace transform implies the assertion. It can be analogously obtained that $R_l(\mu)S_l(t) = S_l(t)R_l(\mu)$ on $Z_{\ran}$.
        \item Let $x_0\in X_{\ran}$. Then $Ex_0\in Z_{\ran}$, and, since $E$ is bounded, we have
        \begin{equation*}
            \mu^p \int_0^\infty \e^{\mu t} ES_r(t)x_0 \dx[t] = E R_r(\mu)x_0 = R_l(\mu) Ex_0 = \mu^p \int_0^\infty \e^{\mu t} S_l(t) Ex_0\dx[t],
        \end{equation*}
        and we conclude from uniqueness of the Laplace transform that $ES_r(t)x_0 = S_l(t) Ex_0$. Analogously, for $x_0 \in \ran R_r(\mu)^p$ one has $AS_r(t) x_0= S_l(t) Ax_0$. 
        
        \item This follows by exactly the same argumentation as in the proof of Theorem \ref{thm:1}.
        
        \item Let $\lambda, \mu \in \rho(E,A)\cap\C_{\Re >\omega}$ and $x_0\in \ran R_r(\lambda)^p$. Since $(\mu E-A)\inv$ is bounded \eqref{eq:integrated-cp} is equivalent to
        \begin{align*}
            \frac{\dd}{\dd t} R_r(\mu) S_r(t)x_0 &= (\mu E-A)\inv A S_r(t)x_0 + \frac{t^{p-1}}{(p-1)!} R_r(\mu)x_0 \\ 
            &= \mu R_r(\mu) S_r(t)x_0 - S_r(t)x_0 + \frac{t^{p-1}}{(p-1)!}R_r(\mu)x_0,
        \end{align*}
        where we have used that $(\mu E-A)\inv A = \mu R_r(\mu) - I_{\dom(A)}$. An integration gives, by using $S_r(0)=0$,
        \begin{align}\label{eq:integral-integrated-semigroup}
            \int_0^t S_r(\tau)x_0 \dx[\tau] = -R_r(\mu) S_r(t) x_0 + \mu R_r(\mu) \int_0^t S_r(\tau)x_0\dx[\tau] + \frac{t^p}{p!} R_r(\mu)x_0,
        \end{align}
        and especially $\int_0^t S_r(\tau)x_0\dx[\tau] \in \dom(A)$. Since $R_r(\mu)$ is bounded and $(S_r(t))_{t\geq 0}$ is exponentially bounded, \eqref{eq:integral-integrated-semigroup} extends to all $x\in \overline{R_r(\mu)^p} = X_{\ran}$.
        Note that the right-hand side of \eqref{eq:integral-integrated-semigroup} maps into $\dom(A)$.
        Multiplying both sides with $(\mu E-A)$ and rearranging the equation a last time, we obtain the assumption.
        
        \item For $\lambda \in \rho(E,A)\cap \C_{\Re >\omega}$ we define $\tilde E \coloneqq E (\lambda E-A)\inv$ and $\tilde A \coloneqq A(\lambda E-A)\inv$. The remainder follows analogously to the proof of \ref{lem:properties-integrated-semigroup-d}.
        
        \item The proof proceeds analogously to \cite[Prop.~5.1]{neubrander_integrated_1988} for $(S_r(t))_{t\geq 0}$. 
        For $(S_l(t))_{t\geq 0}$, one first transforms \eqref{eq:dae} into
        \begin{equation*}
          \tfrac{\dd}{\dd t}\, E(\mu E-A)^{-1} z(t) = A(\mu E-A)^{-1} z(t),
        \end{equation*}
        as in Corollary~\ref{cor:1}, and then repeats the previous steps. 
        Finally, the commutativity of the integrated semigroups can be verified directly by straightforward calculations.        \qedhere
    \end{enumerate}
\end{proof}

\section{Solvability of inhomogeneous DAEs}\label{section:inhomogeneous-daes}
Next we consider the more general case of DAEs with inhomogeneity, i.e.,
\begin{equation}\label{eq:inh-without-initial-value}    \begin{split}
        \tfrac{\dd}{\dd t}Ex(t) &= Ax(t) + f(t), \quad t\in [0,\infty),
    \end{split}
\end{equation}
where $f\colon [0,\infty)\to Z$ and $X$, $Z$, $E$ and $A$ as in Assumption \ref{assumption:1}. 
As we are only examining linear systems, we are going to neglect the initial value for now and only give a presentation for the particular solution of \eqref{eq:inh-without-initial-value}. Before analyzing the solutions for any arbitrary function $f$, we first distinguish between two scenarios. Specifically, we examine the cases where $f(t)$ resides either in $\overline{\ran R_l(\mu)^p}$ or in $\ker R_l(\mu)^p$. The latter case is derived from the finite dimensional setting \cite[Ch.~2.3]{kunkel_differential-algebraic_2006}, with a few additional steps. 

\subsection{Inhomogeneity in \texorpdfstring{$Z_{\ker}$}{Zker}}
We start by showing that $A$ restricted to $Z_{\ker} = \ker R_r(\mu)^p$ is invertible, as long as the complex resolvent index exists. This result can be found in \cite[Ch.~2.2]{sviridyuk_linear_2003} under stronger conditions, namely by assuming weak radiality of $(E,A)$ \cite[Ch.~2.1]{sviridyuk_linear_2003}. To simplify the notation we set $X_{\ker} \coloneqq \ker R_r(\mu)^p$ and $Z_{\ker} \coloneqq \ker R_l(\mu)^p$.

The following proof makes use of the index theory for infinite-dimensional systems. In particular, it uses the chain index (see \cite{berger_quasi-weierstrass_2012} for the finite dimensional case or \cite{erbay_index_2024, sviridyuk_linear_2003} for the infinite-dimensional case). To briefly recall the definition, we say $x_1, \ldots, x_q\in X$ is a \textit{chain of $(E,A)$ of length $q$}, if $x_1\in \ker E\backslash \{0\}$ and $E x_{i+1} = Ax_i$, $i=1,\ldots,q-1$. Further, we denote the supremum over all chain lengths of $(E,A)$ as the \textit{chain index of $(E,A)$}.

\begin{lem}\label{lem:A-invertible-on-ker}\hfill\\
    Let $(E,A)$ satisfy Assumption \ref{assumption:1} and set $p:=p_{\mathrm{res}}^{(E,A)}+1$.
    Then, the operator 
    \begin{equation*}
        A_{\ker}\colon \dom(A_{\ker})\subseteq X_{\ker} \to Z_{\ker}, \quad x\mapsto Ax,
    \end{equation*}
    with
    \begin{equation*}
        \dom(A_{\ker})\coloneqq \dom(A)\cap X_{\ker}
    \end{equation*}
    has a bounded inverse. Furthermore, $E_{\ker} A\inv_{\ker} \in L(Z_{\ker})$ and $A\inv_{\ker} E_{\ker} \in L(X_{\ker})$ are nilpotent of degree not exceeding $p$, where $E_{\ker}\coloneqq E|_{X_{\ker}} \in L(X_{\ker}, Z_{\ker})$.
\end{lem}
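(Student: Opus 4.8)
The plan is to work entirely inside the algebra generated by $(\mu E-A)\inv$, $E$ and $A$ for a fixed $\mu\in\rho(E,A)\cap\C_{\Re>\omega}$ with $\mu\neq 0$, which is cleaner than invoking the chain-index machinery. The identities I rely on are $E R_r(\mu)^k = R_l(\mu)^k E$ and $R_r(\mu)^k(\mu E-A)\inv = (\mu E-A)\inv R_l(\mu)^k$ (both by induction from $ER_r(\mu)=R_l(\mu)E$ and $R_r(\mu)(\mu E-A)\inv=(\mu E-A)\inv R_l(\mu)$), together with $A(\mu E-A)\inv = \mu R_l(\mu)-I_Z$ on $Z$ and $\ker R_r(\mu)=\ker E$ (the latter because $(\mu E-A)\inv$ is injective). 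First I would verify well-definedness of $A_{\ker}$ and $E_{\ker}$. For $x\in\dom(A)\cap X_{\ker}$ one has $R_r(\mu)^{p-1}x\in\ker R_r(\mu)=\ker E$, so $R_l(\mu)^p Ax = (\mu R_l(\mu)-I)\,ER_r(\mu)^{p-1}x = 0$, i.e. $Ax\in Z_{\ker}$; likewise $R_l(\mu)^pEx = ER_r(\mu)^p x = 0$ shows $E$ maps $X_{\ker}$ into $Z_{\ker}$, so $E_{\ker}\in L(X_{\ker},Z_{\ker})$ is legitimate.

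Injectivity of $A_{\ker}$ is immediate: if $x\in\dom(A)\cap X_{\ker}$ with $Ax=0$, then $(\mu E-A)x=\mu Ex$ gives $x=\mu R_r(\mu)x$, hence $R_r(\mu)^p x = \mu^{-p}x$, which forces $x=0$ since $x\in\ker R_r(\mu)^p$. For surjectivity and boundedness of the inverse the decisive observation is that $R_l(\mu)$ leaves $Z_{\ker}=\ker R_l(\mu)^p$ invariant and is nilpotent of degree $\le p$ there; consequently $\mu R_l(\mu)-I_Z$ is boundedly invertible on $Z_{\ker}$, with inverse $-\sum_{k=0}^{p-1}\mu^k R_l(\mu)^k$. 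Given $z\in Z_{\ker}$ I then set
\[
    x \coloneqq -(\mu E-A)\inv\sum_{k=0}^{p-1}\mu^k R_l(\mu)^k z \in \dom(A).
\]
Using $A(\mu E-A)\inv=\mu R_l(\mu)-I$ one computes $Ax = (\mu R_l(\mu)-I)(\mu R_l(\mu)-I)\inv z = z$, and applying $R_r(\mu)^p$ and the intertwining relation gives $R_r(\mu)^p x = (\mu E-A)\inv R_l(\mu)^p\big[(\mu R_l(\mu)-I)\inv z\big]=0$, since the bracket lies in $Z_{\ker}$. This exhibits the bounded right inverse $A_{\ker}\inv = -(\mu E-A)\inv\sum_{k=0}^{p-1}\mu^k R_l(\mu)^k$ on $Z_{\ker}$, which together with injectivity is the desired two-sided bounded inverse.

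The nilpotency statements then follow by feeding this formula back into the intertwining identities. For $z\in Z_{\ker}$ one has $E_{\ker}A_{\ker}\inv z = E(\mu E-A)\inv(\mu R_l(\mu)-I)\inv z = R_l(\mu)(\mu R_l(\mu)-I)\inv z$, so $E_{\ker}A_{\ker}\inv$ is $R_l(\mu)$ times the unit $(\mu R_l(\mu)-I)\inv$ on $Z_{\ker}$; since these two commute and $R_l(\mu)^p=0$ on $Z_{\ker}$, its $p$-th power vanishes. Symmetrically, using $R_l(\mu)^kE=ER_r(\mu)^k$ and $(\mu E-A)\inv E=R_r(\mu)$, one finds $A_{\ker}\inv E_{\ker} = R_r(\mu)(\mu R_r(\mu)-I)\inv$ on $X_{\ker}$, again nilpotent of degree not exceeding $p$.

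I expect the main obstacle to be the surjectivity step, specifically verifying that the natural candidate preimage $x$ actually lies in $X_{\ker}$ rather than merely in $\dom(A)$. This is precisely where the intertwining relation $R_r(\mu)^p(\mu E-A)\inv=(\mu E-A)\inv R_l(\mu)^p$ combined with the nilpotency of $R_l(\mu)$ on $Z_{\ker}$ is indispensable; without it one obtains only an abstract algebraic inverse with no control over its domain or norm. The nilpotency of the resolvents on the respective kernels, which underlies both the inversion of $\mu R_l(\mu)-I$ and the final degree bound, is in turn a direct consequence of the defining relations $R_l(\mu)^p|_{Z_{\ker}}=0$ and $R_r(\mu)^p|_{X_{\ker}}=0$.
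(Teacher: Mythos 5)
Your proof is correct, but it takes a genuinely different route from the paper's. The paper argues via the chain-index machinery of \cite{sviridyuk_linear_2003}: well-definedness of $A_{\ker}$ and $E_{\ker}$ is quoted from there, then the complex resolvent bound (used only along the real axis, letting $\mu\to\infty$) shows every chain of $(E,A)$ has length at most $p$, and finally $X_{\ker}$ is identified with the linear span of all chains, so that invertibility and nilpotency follow as in the corresponding lemmas of that reference. You replace all of this with pure pseudo-resolvent algebra: the nilpotency of $R_l(\mu)$ on $Z_{\ker}=\ker R_l(\mu)^p$ is definitional, so $\mu R_l(\mu)-I$ is invertible there via the finite Neumann series $-\sum_{k=0}^{p-1}\mu^k R_l(\mu)^k$, and the intertwining relations $ER_r(\mu)^k=R_l(\mu)^kE$ and $R_r(\mu)^k(\mu E-A)^{-1}=(\mu E-A)^{-1}R_l(\mu)^k$ carry the rest. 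I checked the individual steps (well-definedness, injectivity, the explicit right inverse, membership of the candidate preimage in $X_{\ker}$, and both nilpotency claims, including the needed invariance of $Z_{\ker}$ and $X_{\ker}$ under $R_l(\mu)$ and $R_r(\mu)$); they are all sound. What your approach buys: it is self-contained, it produces the explicit formula $A_{\ker}^{-1}=-(\mu E-A)^{-1}\sum_{k=0}^{p-1}\mu^k R_l(\mu)^k\big|_{Z_{\ker}}$, and, notably, it never invokes the resolvent growth estimate at all — only $\rho(E,A)\neq\emptyset$ — so it actually proves the statement for $\ker R_r(\mu)^k$ and $\ker R_l(\mu)^k$ with arbitrary $k\in\N$, which is more general than what Assumption \ref{assumption:1} is used for in the paper. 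What the paper's route buys instead is the structural byproduct $p_{\mathrm{chain}}^{(E,A)}\leq p$ (the chain-index bound), which the authors exploit in the remark immediately following the lemma and which your argument does not deliver.
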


\begin{proof}
    Let $x\in \dom(A_{\ker})$. By \cite[Remark 2.1.4]{sviridyuk_linear_2003} we have
    \begin{equation*}
        R_l(\mu)^p(A_{\ker} x) = A (R_r(\mu)^p x) = 0.
    \end{equation*}
    Thus, $A_{\ker}$ is well-defined. Analogously, by using that $E$ is bounded, 
    \begin{equation*}
        R_l(\mu)^p(Ex) = E ( R_r(\mu)^p x)\;\text{ for all $x\in X_{\ker}$,}
    \end{equation*}
    we obtain that $E_{\ker} \in L(X_{\ker}, Z_{\ker})$. 
	
    Next, we show that the chain index of $(E,A)$ is bounded by $p$. Let $(x_1, \ldots, x_q)$ be a chain of $(E,A)$ of length $q=p+1 = p_{\mathrm{res}}^{(E,A)}+2$. By \cite[Thm.~2.1.3]{sviridyuk_linear_2003} one has
    \begin{equation*}
        - R_r(\mu) x_{q} =  x_{q-1} + \mu x_{q-2} + \ldots  + \mu^{q-2}x_1,
    \end{equation*}
    for some $\mu \in \rho(E,A)$. Rearranging this equation and using the complex resolvent index one obtains
    \begin{equation*}
        \Vert x_1 \Vert \leq \frac{C \vert \mu \vert^{p_{\mathrm{res}}^{(E,A)} -1}}{\vert \mu \vert^{q-2}} \Vert x_q\Vert + \frac{1}{\vert\mu\vert^{q-2}} \Vert x_{q-1} \Vert + \ldots + \frac{1}{\vert \mu \vert} \Vert x_2\Vert.
    \end{equation*}
    Assuming $\mu \in \R$ and letting $\mu \to \infty$, one obtains $x_1 = 0$. This is a contradiction and shows that all chains of $(E,A)$ are at most of length $p$, i.e.~$p_{\mathrm{chain}}^{(E,A)}\leq p$.
	
    By \cite[Thm.~2.1.2]{sviridyuk_linear_2003} $X_{\ker}$ is the linear span of all chains of $(E,A)$ whose lengths do not exceed $p$. Then the rest follows analogously to the proofs of \cite[Lem.~2.2.4\&2.2.5]{sviridyuk_linear_2003}.
\end{proof}

\begin{remark}
    In the proof of Lemma \ref{lem:A-invertible-on-ker} one only uses the resolvent growth of $(E,A)$ along the real-axis. Therefore, it would be sufficient to assume only 
    \begin{equation*}
        \Vert (\lambda E-A)\inv \Vert \leq C \vert \lambda \vert^p, \quad \lambda \in [\omega,\infty),
    \end{equation*}
    for a $\omega \in \R$, $C>0$. This is also known as the \textit{(real) resolvent index} (see \cite{erbay_index_2024}). Further, it becomes clear that the chain index is always bounded by the (real) resolvent index $+1$ (see \cite{erbay_index_2024} for further information regarding the index of a DAE).
\end{remark}

With Lemma \ref{lem:A-invertible-on-ker} we now have all the tools to present solutions with inhomogeneities in $Z_{\ker}$ (see \cite[Ch.~2]{kunkel_differential-algebraic_2006} for finite dimensions).

\begin{prop}\label{prop:sol-with-inh-in-ker}\hfill\\
    Let $(E,A)$ satisfy Assumption \ref{assumption:1} and set $p:=p_{\mathrm{res}}^{(E,A)}+1$. Let $f\in C^p([0,\infty); Z_{\ker})$. Then \eqref{eq:inh-without-initial-value} has a classical solution of the form
    \begin{equation}\label{eq:sol-with-inh-in-ker}
        x(t) = - \sum_{i=0}^{p} A\inv_{\ker} (E_{\ker} A\inv_{\ker})^i f^{(i)}(t), \quad t\geq 0.
    \end{equation}
\end{prop}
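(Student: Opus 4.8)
The plan is to verify directly that the candidate function \eqref{eq:sol-with-inh-in-ker} is a classical solution, exploiting the structure provided by Lemma \ref{lem:A-invertible-on-ker}. The key observation is that on $Z_{\ker}$ everything lives in finite-index nilpotent territory: $A_{\ker}$ is boundedly invertible and $N\coloneqq A_{\ker}\inv E_{\ker}$ satisfies $N^{p}=0$. Since $f(t)\in Z_{\ker}$ for all $t$ and $f\in C^{p}$, the proposed $x(t)$ is a finite sum of continuous (indeed $C^{p}$) functions and hence lies in $C([0,\infty);X)$; moreover each summand $A_{\ker}\inv(E_{\ker}A_{\ker}\inv)^{i}f^{(i)}(t)$ takes values in $\dom(A_{\ker})\subseteq\dom(A)$, so $x(t)\in\dom(A)$ for all $t\geq0$ as required by the definition of a classical solution.

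First I would compute $Ax(t)$ and $Ex(t)$ using the relations $A_{\ker}A_{\ker}\inv=I_{Z_{\ker}}$ and the definitions $E_{\ker}=E|_{X_{\ker}}$, $A_{\ker}x=Ax$ for $x\in X_{\ker}$. Applying $A$ to \eqref{eq:sol-with-inh-in-ker} telescopes: the $i=0$ term gives $-A A_{\ker}\inv f(t)=-f(t)$, while for $i\geq1$ one has $A A_{\ker}\inv(E_{\ker}A_{\ker}\inv)^{i}=E_{\ker}A_{\ker}\inv(E_{\ker}A_{\ker}\inv)^{i-1}=(E_{\ker}A_{\ker}\inv)^{i}$, so
\begin{equation*}
    Ax(t)=-f(t)-\sum_{i=1}^{p}(E_{\ker}A_{\ker}\inv)^{i}f^{(i)}(t).
\end{equation*}
On the other hand, applying $E$ (equivalently $E_{\ker}$, since all arguments lie in $X_{\ker}$) gives $Ex(t)=-\sum_{i=0}^{p}E_{\ker}A_{\ker}\inv(E_{\ker}A_{\ker}\inv)^{i}f^{(i)}(t)=-\sum_{i=0}^{p}(E_{\ker}A_{\ker}\inv)^{i+1}f^{(i)}(t)$, which shows $Ex\in C^{1}$ since $f\in C^{p}$. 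Differentiating termwise,
\begin{equation*}
    \tfrac{\dd}{\dd t}Ex(t)=-\sum_{i=0}^{p}(E_{\ker}A_{\ker}\inv)^{i+1}f^{(i+1)}(t)=-\sum_{j=1}^{p+1}(E_{\ker}A_{\ker}\inv)^{j}f^{(j)}(t).
\end{equation*}

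The final step is to compare $\tfrac{\dd}{\dd t}Ex(t)$ with $Ax(t)+f(t)$. Subtracting, the $f(t)$ terms cancel and the sums over the overlapping range $j=1,\dots,p$ cancel, leaving only the boundary term $-(E_{\ker}A_{\ker}\inv)^{p+1}f^{(p+1)}(t)$ from the top of the differentiated sum. Here is exactly where the nilpotency from Lemma \ref{lem:A-invertible-on-ker} enters: since $E_{\ker}A_{\ker}\inv\in L(Z_{\ker})$ is nilpotent of degree at most $p$, we have $(E_{\ker}A_{\ker}\inv)^{p}=0$, hence $(E_{\ker}A_{\ker}\inv)^{p+1}=0$ and the leftover term vanishes. (This also shows that the $i=p$ summand in \eqref{eq:sol-with-inh-in-ker} contributes $(E_{\ker}A_{\ker}\inv)^{p}$ to $Ex$, which already vanishes, so no smoothness beyond the stated $C^{p}$ is actually needed.) This establishes $\tfrac{\dd}{\dd t}Ex(t)=Ax(t)+f(t)$, completing the verification.

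The routine part is the bookkeeping of the telescoping sums and the index shifts; the one genuine ingredient—and the step I would single out as essential rather than obstacle—is that nilpotency of degree at most $p$ is precisely what truncates the formal Neumann-type series at order $p$ and makes the candidate an exact solution rather than an approximate one. The only subtlety to check carefully is that $A$ commutes past the bounded operators $E_{\ker}A_{\ker}\inv$ in the way claimed; this is justified because each intermediate expression $(E_{\ker}A_{\ker}\inv)^{i}f^{(i)}(t)$ lies in $\ran E_{\ker}\subseteq Z_{\ker}$ and $A_{\ker}\inv$ maps back into $\dom(A_{\ker})$, so every application of $A$ is to an element of $\dom(A)$ and the identity $AA_{\ker}\inv=I$ on $Z_{\ker}$ applies literally.
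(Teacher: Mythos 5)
Your proof is correct and takes essentially the same route as the paper: a direct verification that the candidate formula solves \eqref{eq:inh-without-initial-value}, with the nilpotency of $E_{\ker}A_{\ker}^{-1}$ from Lemma \ref{lem:A-invertible-on-ker} killing the top-order term. The paper invokes the nilpotency \emph{before} differentiating $Ex$ (truncating the sum to $i\le p-1$ so that $f^{(p+1)}$ never appears, even formally), which is slightly cleaner than your order of operations, but your parenthetical observation that the $i=p$ summand of $Ex$ already vanishes closes that gap.
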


\begin{proof}
    By applying $E$ to the solution $x(t)$ one obtains
    \begin{equation*}
        Ex(t) = - \sum_{i=0}^p (E_{\ker} A\inv_{\ker})^{i+1} f^{(i)}(t) = - \sum_{i=0}^{p-1} (E_{\ker} A\inv_{\ker})^{i+1} f^{(i)}(t), \quad t\geq 0.
    \end{equation*}
    Note that the nilpotency of $E_{\ker} A\inv_{\ker}$ was used in the second equation. Since $f$ is $p$-times differentiable, $Ex$ becomes differentiable as well and one has
    \begin{align*}
        \frac{\dd}{\dd t}Ex(t) &= - \sum_{i=0}^{p-1} (E_{\ker} A\inv_{\ker})^{i+1}f^{(i+1)}(t) = - \sum_{i=1}^{p} (E_{\ker} A\inv_{\ker})^if^{(i)}(t)  \\
        &= - A_{\ker} \left( \sum_{i=1}^p A\inv_{\ker} (E_{\ker} A\inv_{\ker})^i f^{(i)}(t) \right) \pm f(t) = A x(t) + f(t). \qedhere
    \end{align*}
\end{proof}

\begin{remark}\label{remark:sol-with-inh-in-ker}
By \eqref{eq:sol-with-inh-in-ker}, it is clear that the initial value of the classical solution depends on the derivatives of $f$ under the operators $A^{-1}_{\ker} (E_{\ker} A^{-1}_{\ker})^i$.
%This restriction can be avoided by choosing the space of $f$ appropriately.
In particular, if $f \in H^p_{0,l}([0,\infty); Z_{\ker})$ (as defined in \eqref{eq:H0}), then the initial value of the solution necessarily vanishes at  $t=0$.
\end{remark}

\subsection{Inhomogeneity in \texorpdfstring{$Z_{\ran}$}{Zran}}
In order to deal with inhomogeneities in $Z_{\ran}$, we are going to generalise the solution theory of the inhomogeneous abstract Cauchy problem $\frac{\dd}{\dd t} x(t) = Ax(t) + f(t)$, $x(0) = x_0$, and extend the proofs of \cite[Chapter~2.3]{arendt_vector-valued_2011} to the inhomogeneous DAE case with the help of Lemma \ref{lem:properties-integrated-semigroup}.

\begin{lem}\label{lem:solutions-live-in-xran}\hfill\\
    Let $(E,A)$ satisfy Assumption \ref{assumption:1} and set $p:=p_{\mathrm{res}}^{(E,A)}+1$. Let $f\in L^1([0,\infty);Z_{\ran})$ and let $x\colon [0,\infty)\to X$ be a continuous mild solution of \eqref{eq:dae-0}. Then $x(t) \in X_{\ran}$ for all $t\geq 0$.
\end{lem}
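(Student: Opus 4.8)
The plan is to reduce the mild-solution identity \eqref{eq:mild-sol} to a single relation for $R_r(\mu)$ and then propagate membership in $X_{\ran}$ downward through the powers $R_r(\mu)^j$ by repeatedly differentiating. Fix $\mu\in\rho(E,A)$ and abbreviate $y(t)\coloneqq \int_0^t x(\tau)\dx[\tau]$, which is well defined and, since $x$ is continuous, lies in $C^1([0,\infty);X)$ with $y'=x$. First I would record three elementary invariance facts: (i) $R_r(\mu)$ maps $X_{\ran}$ into itself, since $R_r(\mu)\ran R_r(\mu)^p=\ran R_r(\mu)^{p+1}\subseteq \ran R_r(\mu)^p$ and $X_{\ran}$ is closed; (ii) $(\mu E-A)\inv$ maps $Z_{\ran}$ into $X_{\ran}$, since $(\mu E-A)\inv R_l(\mu)^p=R_r(\mu)^p(\mu E-A)\inv$ and again $X_{\ran}$ is closed; and (iii) $\int_0^t f(\tau)\dx[\tau]\in Z_{\ran}$ for every $t$, because the Bochner integral of a $Z_{\ran}$-valued function stays in the closed subspace $Z_{\ran}$.

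Next I would apply $(\mu E-A)\inv$ to \eqref{eq:mild-sol}. Using $(\mu E-A)\inv E=R_r(\mu)$ and $(\mu E-A)\inv A=\mu R_r(\mu)-I$ on $\dom(A)$ (and $y(t)\in\dom(A)$, by closedness of $A$), this yields the reduced relation
\begin{equation*}
    R_r(\mu)\bigl(x(t)-x(0)\bigr)=\mu R_r(\mu)y(t)-y(t)+g(t),\qquad g(t)\coloneqq (\mu E-A)\inv\!\int_0^t f(\tau)\dx[\tau],
\end{equation*}
where by facts (ii) and (iii) we have $g(t)\in X_{\ran}$ for all $t$.

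The core is a downward induction on $j\in\{p,p-1,\dots,0\}$ proving that $R_r(\mu)^j y(t)\in X_{\ran}$ and $R_r(\mu)^j x(t)\in X_{\ran}$ for all $t\ge0$. The base case $j=p$ is immediate since $\ran R_r(\mu)^p\subseteq X_{\ran}$. For the inductive step I would apply $R_r(\mu)^{j-1}$ to the reduced relation and solve for $R_r(\mu)^{j-1}y(t)$; the right-hand side then consists of $\mu R_r(\mu)^j y(t)$, $R_r(\mu)^j(x(t)-x(0))$, and $R_r(\mu)^{j-1}g(t)$, each lying in $X_{\ran}$ by the induction hypothesis together with fact (i) and $g(t)\in X_{\ran}$. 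This gives $R_r(\mu)^{j-1}y(t)\in X_{\ran}$. The decisive move is then to differentiate: since $y\in C^1$ and $R_r(\mu)^{j-1}$ is bounded, the curve $t\mapsto R_r(\mu)^{j-1}y(t)$ is $C^1$ with derivative $R_r(\mu)^{j-1}x(t)$, and as it takes values in the closed subspace $X_{\ran}$, so does its derivative. Hence $R_r(\mu)^{j-1}x(t)\in X_{\ran}$, closing the step. At $j=0$ the claim reads $x(t)\in X_{\ran}$ for all $t$, which is the assertion.

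The main obstacle I anticipate is exactly this propagation from higher to lower powers: a single application of $(\mu E-A)\inv$ only controls $R_r(\mu)^{p-1}y$, not $y$ itself, and one cannot naively strip off factors of $R_r(\mu)$. The differentiation trick — converting pointwise information about the integrated quantity $R_r(\mu)^{j}y$ into information about $R_r(\mu)^{j}x$ via $y'=x$ — is what makes the induction close, and it is precisely here that the continuity hypothesis on $x$ is essential. A secondary point requiring care is the commutation of $(\mu E-A)\inv$ with the Bochner integral and with $A$ on $\dom(A)$, which rests on boundedness of $(\mu E-A)\inv$ and closedness of $A$; these are routine but should be stated. Finally, the independence of $X_{\ran}$ and $Z_{\ran}$ from the chosen $\mu$, a consequence of the pseudo-resolvent identity, makes fixing a single $\mu$ harmless.
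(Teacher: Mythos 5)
Your proof is correct, but it is organized genuinely differently from the paper's. The paper works with increments: for $t,h\geq 0$ it writes $Ex(t+h)-Ex(t)=A\int_t^{t+h}x(\tau)\,\mathrm{d}\tau+\int_t^{t+h}f(\tau)\,\mathrm{d}\tau$, applies $A\inv$ (assuming w.l.o.g.\ $0\in\rho(E,A)$), observes that the left-hand side lies in $\overline{\ran R_r(\lambda)}$, and passes to the limit of difference quotients to get $x(t)\in\overline{\ran R_r(\lambda)}$; it then iterates this \emph{upward} in $k$, concluding $x(t)\in\overline{\ran R_r(\lambda)^k}$ for every $k$, and finishes via the stagnation of the range sequence at $k=p$ (cited from the literature). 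You instead keep the identity on $[0,t]$, apply $(\mu E-A)\inv$ once to obtain a single reduced relation, and run the induction in the opposite direction: the target space $X_{\ran}$ stays fixed while the number of resolvent factors in front of $x(t)$ and $y(t)$ decreases from $p$ to $0$, membership being transferred from $R_r(\mu)^{j-1}y$ to $R_r(\mu)^{j-1}x$ by differentiating a $C^1$ curve with values in a closed subspace — the same closedness-plus-continuity mechanism as the paper's difference quotients, merely packaged as derivatives. Your variant needs neither the normalization $0\in\rho(E,A)$ nor the stagnation result, only the nesting $\ran R_r(\mu)^{p+1}\subseteq\ran R_r(\mu)^p$ and your intertwining facts (i)--(ii), so it is more self-contained; the paper's variant yields the slightly stronger intermediate conclusion $x(t)\in\bigcap_{k}\overline{\ran R_r(\lambda)^k}$. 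One nitpick: $y(t)\in\dom(A)$ does not follow "by closedness of $A$" — it is part of the definition of a mild solution, so you should simply invoke that.
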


\begin{proof}
    For simplicity, assume that $\lambda = 0\in \rho(E,A)$.
    Let $x$ be a mild solution of \eqref{eq:dae-0}. Then, for $t,h\geq 0$,
    \begin{equation*}
        Ex(t+h)-Ex(t) = A\int_t^{t+h} x(\tau)\dx[\tau] + \int_t^{t+h}f(\tau)\dx[\tau],
    \end{equation*}
    or equivalently
    \begin{equation}\label{eq:4}
        A\inv E\left(x(t+h)-x(t)\right)- A\inv \int_t^{t+h} f(\tau)\dx[\tau] = \int_t^{t+h}x(\tau)\dx[\tau].
    \end{equation}
    Observe that
    \begin{equation}\label{eq:5}
        A\inv (Z_{\ran}) \subseteq X_{\ran}\subseteq \overline{\ran R_r(\lambda)^k}, \quad k \in \N_0.
    \end{equation}
    Thus, $A\inv \int_t^{t+h}f(\tau)\dx[\tau] \in X_{\ran}$. Altogether, the left-hand side of \eqref{eq:4} maps into $\overline{\ran R_r(\lambda)}$. By dividing both sides by $h$ and letting $h\to 0$, one deduces $x(t)\in \overline{\ran R_r(\lambda)}$.
    Repeating this iteratively, one obtains $x(t)\in \overline{\ran R_r(\lambda)^k}$, for $k\in \N$. Thus,
    \begin{equation*}
        x(t)\in \bigcap_{k\in \N_0} \overline{\ran R_r(\lambda)^k}.
    \end{equation*}
    As seen in \cite[Prop.~5.1]{sviridyuk_linear_2003} the sequence $\overline{\ran R_r(\lambda)^k}$ stagnates at $k=p$. This leads to $x(t) \in \overline{\ran R_r(\lambda)^p} = X_{\ran}$.
\end{proof}

Since this is the case we are interested in, we assume $f\colon[0,\infty) \to Z_{\ran}$. Further, we define for $x_0 \in X_{\ran}$ the function
\begin{equation}\label{eq:inhom-int-sol}
    v(t)\coloneqq S_l(t)Ex_0 + \int_0^t S_l(s)f(t-\tau)\dx[\tau], \quad t\in [0, \infty),
\end{equation}
where $(S_l(t))_{t\geq 0}$ denotes the $p$-times integrated semigroup from Corollary \ref{cor:1}

\begin{lem}\label{lem:solution-equals-derivative-of-v}\hfill\\ 
    Let $(E,A)$ satisfy Assumption \ref{assumption:1}, set $p:=p_{\mathrm{res}}^{(E,A)}+1$ and let $(S_l(t))_{t\geq 0}$ and $(S_r(t))_{t\geq 0}$  be the $p$-times integrated semigroups of $(E,A)$. Let $f\in L^1([0, \infty); Z_{\ran})$, $x_0 \in \ran R_r(\mu)^p$ and $v$ as in \eqref{eq:inhom-int-sol}.
    \begin{enumerate}[label=({\alph*)}]
        \item If there exists a mild solution $x\colon [0,\infty) \to X$ of \eqref{eq:dae-0}, then $v\in C^p([0,\infty); Z_{\ran})$ and $Ex = v^p$. \label{lem:solution-equals-derivative-of-v-a}
        \item If there exists a classical solution $x\colon [0,\infty)\to X$ of \eqref{eq:dae-0}, then $v \in C^{p+1}([0,\infty) ; Z_{\ran})$ and $\frac{\dd}{\dd t}Ex = v^{p+1}$.
    \end{enumerate}
\end{lem}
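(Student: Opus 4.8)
The plan is to pass to Laplace transforms, where the representation formulas of Theorem~\ref{thm:1} and Corollary~\ref{cor:1} turn everything into algebra, and then to invert. I read the integral in \eqref{eq:inhom-int-sol} as the convolution $(S_l * f)(t)=\int_0^t S_l(\tau)f(t-\tau)\dx[\tau]$, so that $v=S_l(\cdot)Ex_0 + S_l*f$. Since $x_0\in\ran R_r(\mu)^p$ one has $Ex_0=R_l(\mu)^pEz_0\in\ran R_l(\mu)^p\subseteq Z_{\ran}$, and because $f$ is $Z_{\ran}$-valued the convolution is a well-defined, exponentially bounded, continuous $Z_{\ran}$-valued function (using $\Vert S_l(t)\Vert\le\tilde C\e^{\tilde\omega t}$ from Corollary~\ref{cor:1}, $f\in L^1$, and Lemma~\ref{lem:continuity-of-int-semigroup}). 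Hence $v$ admits a Laplace transform on a right half-plane, and the whole lemma reduces to identifying that transform.

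First I would compute $\widehat{Ex}$. Applying the Laplace transform to the mild-solution identity \eqref{eq:mild-sol}, and noting that both $\int_0^\cdot x\dx[\tau]$ (bounded, since $x\in L^1$) and $A\int_0^\cdot x\dx[\tau]=Ex-Ex(0)-\int_0^\cdot f\dx[\tau]$ are Laplace transformable, the closedness of $A$ permits pulling $A$ out of the Laplace integral. With $Ex(0)=Ex_0$ this gives, for $\Re\lambda$ large,
\[
  (\lambda E-A)\widehat x(\lambda)=Ex_0+\widehat f(\lambda),\qquad\text{hence}\qquad \widehat{Ex}(\lambda)=R_l(\lambda)\bigl(Ex_0+\widehat f(\lambda)\bigr).
\]
On the other hand, Corollary~\ref{cor:1} reads $\widehat{S_l(\cdot)z}(\lambda)=\lambda^{-p}R_l(\lambda)z$, and the convolution theorem (valid since $S_l$ is exponentially bounded and $f\in L^1$, via Fubini and pulling the bounded operator $\lambda^{-p}R_l(\lambda)$ through the Bochner integral) gives $\widehat{S_l*f}(\lambda)=\lambda^{-p}R_l(\lambda)\widehat f(\lambda)$. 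Adding the two contributions,
\[
  \widehat v(\lambda)=\lambda^{-p}R_l(\lambda)\bigl(Ex_0+\widehat f(\lambda)\bigr)=\lambda^{-p}\,\widehat{Ex}(\lambda).
\]

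Finally I would invert. Writing $j_p(t)=t^{p-1}/(p-1)!$, so that $\widehat{j_p}(\lambda)=\lambda^{-p}$, the function $\tilde v:=j_p*Ex$ satisfies $\widehat{\tilde v}(\lambda)=\lambda^{-p}\widehat{Ex}(\lambda)=\widehat v(\lambda)$; as both are continuous and exponentially bounded, uniqueness of the Laplace transform forces $v=j_p*Ex$. For part (a), $Ex\in C([0,\infty);Z)$, so this $p$-fold primitive lies in $C^p$ with $(j_p*Ex)^{(p)}=Ex$ and vanishing derivatives up to order $p-1$ at $0$; since $v$ takes values in the closed subspace $Z_{\ran}$, so do all its derivatives, giving $v\in C^p([0,\infty);Z_{\ran})$ and $v^{(p)}=Ex$. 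For part (b) the classical solution has $Ex\in C^1$, so one further derivative is available: from $v^{(p)}=Ex\in C^1$ I obtain $v\in C^{p+1}$ and $v^{(p+1)}=\tfrac{\dd}{\dd t}Ex$.

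The delicate points — and where I expect to spend the most care — are the Laplace-transform manipulations rather than the final inversion: justifying that $A$ commutes with the Laplace integral through its closedness (which requires the integrability of both $\int_0^\cdot x$ and $A\int_0^\cdot x$), verifying the hypotheses of the convolution theorem for the operator-valued kernel $S_l$, and checking that every object is exponentially bounded and $Z_{\ran}$-valued so that the uniqueness theorem for the Laplace transform applies on the relevant half-plane.
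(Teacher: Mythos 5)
Your proof is correct, but it takes a genuinely different route from the paper's. The paper argues entirely in the time domain: for fixed $t$ it introduces the auxiliary function $w(\tau)=ES_r(t-\tau)\int_0^\tau x(\sigma)\dx[\sigma]$, computes its derivative via Lemma \ref{lem:properties-integrated-semigroup} \ref{lem:properties-integrated-semigroup-b} and \ref{lem:properties-integrated-semigroup-c}, integrates over $[0,t]$ to obtain an identity between $(p+1)$-fold primitives, and then differentiates $p+1$ times to read off $Ex=v^{(p)}$ --- an adaptation of the classical argument of \cite[Ch.~3.2]{arendt_vector-valued_2011}, local in time. You instead work in the frequency domain: you Laplace-transform the mild-solution identity (pulling the closed operator $A$ through the integral) to get $\widehat{Ex}(\lambda)=R_l(\lambda)\bigl(Ex_0+\widehat f(\lambda)\bigr)$, combine Corollary \ref{cor:1} with the convolution theorem to get $\widehat v(\lambda)=\lambda^{-p}\widehat{Ex}(\lambda)$, and invert by uniqueness to obtain the closed form $v=j_p\ast Ex$ with $j_p(t)=t^{p-1}/(p-1)!$, from which (a) and (b) are immediate. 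Your route buys an explicit formula and bypasses Lemma \ref{lem:properties-integrated-semigroup} and the commutation machinery altogether; what it costs is an essential reliance on global Laplace transformability of $x$, which is available only because the paper's definition of a mild solution requires $x\in L^1([0,\infty);X)$ (the paper's time-domain argument would survive a weakening to local integrability). Two small points you should make explicit: the application of Corollary \ref{cor:1} inside the convolution theorem needs $\widehat f(\lambda)\in Z_{\ran}$, which follows from $f$ being $Z_{\ran}$-valued and $Z_{\ran}$ being closed (you gesture at this, but it is the one place the range assumption on $f$ enters); and in part (b), using $v^{(p)}=Ex$ presupposes that the classical solution is also a mild solution --- the paper covers this by citing \cite[Thm.~2.1]{gernandt_pseudo-resolvent_2023}, and your argument needs it too, not least because the $L^1$ bound underlying your transform computations comes from mildness.
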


\begin{proof}
    \begin{enumerate}[label=({\alph*)}]
        \item Let $0\leq \tau \leq t <\infty$ and define $w(\tau)\coloneqq ES_r(t-\tau) \int_0^\tau x(\sigma)\dx[\sigma]$. By Lemma \ref{lem:solutions-live-in-xran} this is indeed well defined, since $x$ is a continuous mild solution and $x(t) \in X_{\ran}$ for all $t\geq 0$. In addition $S_r(t)$ is closed (as a bounded operator) and, thus, $\int_0^\tau x(\sigma)\dx[\sigma] \in \ran R_r(\mu)^p\cap\dom(A)$ for all $\tau\geq 0$. Hence, by Lemma \ref{lem:properties-integrated-semigroup} \ref{lem:properties-integrated-semigroup-b} and \ref{lem:properties-integrated-semigroup-c} we have
        \begin{align*}
            \frac{\dd}{\dd t}w(\tau) &= -AS_r(t-\tau)\int_0^\tau x(\sigma)\dx[\sigma] - \frac{(t-\tau)^{p-1}}{(p-1)!}E\int_0^\tau x(\sigma)\dx[\sigma] + ES_r(t-\tau)x(\tau)\\
            &=S_l(t-\tau)\left( Ex(\tau)-A\int_0^\tau x(\sigma)\dx[\sigma] \right) - \frac{(t-\tau)^{p-1}}{(p-1)!}E\int_0^\tau x(\sigma)\dx[\sigma]\\
            &= S_l(t-\tau) \left( \int_0^\tau f(\sigma)\dx[\sigma] + Ex(0)\right) - \frac{(t-\tau)^{p-1}}{(p-1)!}E\int_0^\tau x(\sigma)\dx[\sigma].
        \end{align*}
        Since $\int_0^t \tfrac{\dd}{\dd t}w(\tau)\dx[\tau] = w(t)-w(0)=0$ we obtain
        \begin{equation*}
            \int_0^t S_l(t-\tau) \left( \int_0^\tau f(\sigma)\dx[\sigma] + Ex(0)\right) \dx[\tau] = \int_0^t \frac{(t-\tau)^{p-1}}{(p-1)!} E\int_0^\tau x(\sigma)\dx[\sigma] \dx[\tau].
        \end{equation*}
        Thus, by taking the $(p+1)$-th derivative, we obtain
        \begin{align*}
            Ex(t) &= \frac{\dd^{p+1}}{\dd t^{p+1}} \left( \int_0^t \frac{(t-\tau)^{p-1}}{(p-1)!} E\int_0^\tau x(\sigma)\dx[\sigma] \dx[\tau] \right)\\
            &= \frac{\dd^{p+1}}{\dd t^{p+1}} \left( \int_0^t S_l(t-\tau) \left( \int_0^\tau f(\sigma)\dx[\sigma] + Ex(0)\right) \dx[\tau]\right)\\
            &= \frac{\dd^{p+1}}{\dd t^{p+1}} \left( \int_0^t S_l(\tau) \left( \int_0^{t-\tau} f(\sigma)\dx[\sigma] + Ex(0)\right) \dx[\tau]\right)\\
            &= \frac{\dd^p}{\dd t^p} \left( \int_0^t S_l(\tau) f(t-\tau)\dx[\tau] + S_l(t)Ex(0)\right) = \frac{\dd^p}{\dd t^p}v(t).
        \end{align*}
        Note that we used \cite[Prop.~1.3.6]{arendt_vector-valued_2011} in the last equation.
        
        \item By \cite[Thm.~2.1]{gernandt_pseudo-resolvent_2023} every classical solution is also a mild solution and $Ex$ is continuously differentiable. Together with Part \ref{lem:solution-equals-derivative-of-v-a} we obtain $Ex(t)=\frac{\dd^p}{\dd t^p}v(t)$ and thus $v\in C^{p+1}([0,\infty); Z_{\ran})$. \qedhere
    \end{enumerate}
\end{proof}

Next, we generalise \cite[Lem.~3.2.10]{arendt_vector-valued_2011}. To do so, we have to assume, that $f$ maps into the range of $E$. Thus, we assume that there exists a $\tilde f\in L^1([0,\infty); X_{\ran})$, such that $f= E \tilde f$. Furthermore, we define
\begin{equation}\label{eq:inhom-int-sol-2}
    \tilde v(t) \coloneqq S_r(t) x_0 + \int_0^t S_r(t-\tau) \tilde f(\tau) \dx[\tau], \quad  t \geq 0.
\end{equation}
By Lemma \ref{lem:properties-integrated-semigroup} \ref{lem:properties-integrated-semigroup-b} we have $ES_r(t) = S_l(t) E$ and, thus, $v(t) = E \tilde v(t)$. 

\begin{lem}\label{lem:solution-equals-derivative-of-v-part-2}\hfill\\ 
    Let $(E,A)$ satisfy Assumption \ref{assumption:1}, set $p:=p_{\mathrm{res}}^{(E,A)}+1$ and let $(S_l(t))_{t\geq 0}$ and $(S_r(t))_{t\geq 0}$  be the $p$-times integrated semigroups of $(E,A)$.
    Let $\tilde f\in L^1([0,\infty); X_{\ran})$ and $f=E\tilde f$, $x_0 \in \ran R_r(\mu)^p$ and $\tilde v$ as in \eqref{eq:inhom-int-sol-2}. 
    \begin{enumerate}[label=({\alph*)}]
        \item If $\tilde v \in C^p([0,\infty);X_{\ran})$, then $x=\frac{\dd^p}{\dd t^p}\tilde v$ is a mild solution  of \eqref{eq:dae-0}.
        \item If $\tilde v \in C^{p+1}([0,\infty);X_{\ran})$, then $x=\frac{\dd^p}{\dd t^p}\tilde v$ is a classical solution  of \eqref{eq:dae-0}.
    \end{enumerate}
\end{lem}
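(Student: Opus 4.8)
The plan is to establish the converse of Lemma~\ref{lem:solution-equals-derivative-of-v}, mirroring \cite[Lem.~3.2.10]{arendt_vector-valued_2011} but carrying the operator $E$ through every step; the engine is Lemma~\ref{lem:properties-integrated-semigroup}\ref{lem:properties-integrated-semigroup-d} together with the closedness of $A$. First I would establish, for the full function $\tilde v$, the integrated identity
\[
 A\int_0^t \tilde v(\tau)\dx[\tau] = E\tilde v(t) - \frac{t^p}{p!}Ex_0 - \int_0^t \frac{(t-\tau)^p}{p!}f(\tau)\dx[\tau], \qquad t\ge 0 .
\]
To get this, split $\int_0^t\tilde v$ into the homogeneous part $\int_0^t S_r(\tau)x_0\dx[\tau]$ and, after Fubini, the convolution $\int_0^t\bigl(\int_0^{t-\tau}S_r(\sigma)\dx[\sigma]\bigr)\tilde f(\tau)\dx[\tau]$; apply Lemma~\ref{lem:properties-integrated-semigroup}\ref{lem:properties-integrated-semigroup-d} to each inner integral (legitimate since $x_0,\tilde f(\tau)\in X_{\ran}$), pass $A$ under the outer integral by closedness and the exponential bound on $S_r$, and recombine using $E\tilde f=f$ and $ES_r=S_lE$ (Lemma~\ref{lem:properties-integrated-semigroup}\ref{lem:properties-integrated-semigroup-b}) to recognize $E\tilde v(t)$.

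The second step is to differentiate this identity $p$ times. The tool is the elementary closedness lemma: if $A$ is closed, $y\in C^1$ with $y(t)\in\dom(A)$ for all $t$ and $Ay$ is differentiable at $t_0$, then $y'(t_0)\in\dom(A)$ and $Ay'(t_0)=(Ay)'(t_0)$ — proved by applying closedness to the difference quotients of $y$. Writing $w(t)=\int_0^t\tilde v$ and $R(t)$ for the right-hand side above, one has $w\in C^{p+1}$, $w(t)\in\dom(A)$, and $R\in C^p$ (with $R^{(k)}(t)=E\tilde v^{(k)}(t)-\tfrac{t^{p-k}}{(p-k)!}Ex_0-\int_0^t\tfrac{(t-\tau)^{p-k}}{(p-k)!}f(\tau)\dx[\tau]$ for $k\le p$). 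Iterating the closedness lemma $p$ times yields $\tilde v^{(p-1)}(t)\in\dom(A)$ and, with $x:=\tilde v^{(p)}$,
\[
 A\tilde v^{(p-1)}(t) = Ex(t) - Ex_0 - \int_0^t f(\tau)\dx[\tau].
\]
Since $\int_0^t x\dx[\tau]=\tilde v^{(p-1)}(t)-\tilde v^{(p-1)}(0)\in\dom(A)$ and, evaluating the identity at $t=0$, $A\tilde v^{(p-1)}(0)=Ex(0)-Ex_0$, subtracting gives the mild-solution identity $Ex(t)-Ex(0)=A\int_0^t x\dx[\tau]+\int_0^t f\dx[\tau]$; together with $x\in C\subseteq L^1_{\mathrm{loc}}$ and $Ex=E\tilde v^{(p)}\in C$ this proves (a).

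For (b), the extra hypothesis $\tilde v\in C^{p+1}$ gives $x=\tilde v^{(p)}\in C^1$ and $Ex\in C^1$. Applying the closedness lemma once more to the last displayed identity (with $y=\tilde v^{(p-1)}\in C^2$) yields $x(t)\in\dom(A)$ and $\tfrac{\dd}{\dd t}Ex(t)=Ax(t)+f(t)$, which is exactly the classical-solution requirement. I expect the main obstacle to be precisely this repeated differentiation through the unbounded operator $A$: it cannot be performed termwise and must be routed through closedness at each stage, which is why the intermediate objects $\tilde v^{(k-1)}(t)$ have to be shown to lie in $\dom(A)$ before the next derivative can be taken. A secondary subtlety is the regularity of $f$: the term $\int_0^t f$ is differentiable only at Lebesgue points, so the pointwise equation in (b) is first obtained almost everywhere; since $x$ is continuous and $(Ex)'-f$ is continuous once $f$ is (as holds for the smooth inhomogeneities under consideration), a final closedness argument along sequences of Lebesgue points upgrades both $x(t)\in\dom(A)$ and the identity to all $t\ge 0$.
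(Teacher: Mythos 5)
Your proposal follows essentially the same route as the paper's proof: first the integrated identity \eqref{eq:1} for $A\int_0^t \tilde v(\tau)\dx[\tau]$, obtained from Fubini, Lemma~\ref{lem:properties-integrated-semigroup}\,\ref{lem:properties-integrated-semigroup-d} and the closedness of $A$, and then repeated differentiation through the closed operator $A$ (the paper's identity \eqref{eq:2}) down to $k=p-1$, which yields the mild-solution identity, with one further differentiation giving the classical case. If anything, your write-up is slightly more careful than the paper's at two points it glosses over: the subtraction of the $t=0$ evaluation to match the term $Ex(0)$ in \eqref{eq:mild-sol}, and the Lebesgue-point issue when differentiating $\int_0^t f(\tau)\dx[\tau]$ in part (b).
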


\begin{proof}
    \begin{enumerate}[label=({\alph*)}]
        \item We start by integrating $\tilde v$ and using Fubini's Theorem
        \begin{align*}
            \int_0^t \tilde v(\tau)\dx[\tau] &= \int_0^t S_r(\tau) x_0 \dx[\tau] + \int_0^t \int_0^\tau S_r(\sigma) \tilde f(\tau-\sigma)\dx[\sigma]\dx[\tau]\\ 
            &= \int_0^t S_r(\tau) x_0 \dx[\tau] + \int_0^t \int_0^{t-\sigma} S_r(\tau) \tilde f(\sigma) \dx[\tau] \dx[\sigma].
        \end{align*}
        Applying $A$ from the left and using Lemma \ref{lem:properties-integrated-semigroup} \ref{lem:properties-integrated-semigroup-d} we deduce
        \begin{align}\label{eq:1}
            \nonumber A \int_0^t \tilde v(\tau)\dx[\tau] &= E S_r(t) x_0 - \frac{t^p}{p!} Ex_0 + \int_0^t E S_r(t-\tau) \tilde f(\tau) - \frac{(t-\tau)^p}{p!} \underbrace{E \tilde f(\tau)}_{=f(\tau)} \dx[\tau] \\
            &= E \tilde v(t) - \frac{t^p}{p!} Ex_0 - \int_0^t \frac{(t-\tau)^p}{p!} f(\tau)\dx[\tau].
        \end{align}
        Since $E\tilde v = v \in C^p([0,\infty); Z_{\ran})$, $\tilde v(0)=0$ and $A$ is closed, we deduce
        \begin{equation}\label{eq:2}
            A \tilde v^{(k)}(t) = E \tilde v^{(k+1)}(t) - \frac{t^{p-(k+1)}}{(p-(k+1))!} Ex_0 - \int_0^t \frac{(t-\sigma)^{p-(k+1)}}{(p-(k+1))!} f(\sigma) \dx[\sigma],
        \end{equation}
        for $k\in \{ 0,\ldots, p-1\}$. Hence, for $k=p-1$ we have
        \begin{equation*}
            A \tilde v^{(p-1)}(t) = E \tilde v^{(p)}(t) - Ex_0 - \int_0^t f(\sigma)\dx[\sigma].
        \end{equation*}
        Thus, $x(t)\coloneqq \tilde v^{(p)}(t)$ is a mild solution of \eqref{eq:dae-0}.
        
        \item If $E\tilde v = v\in C^{p+1}([0,\infty); Z_{\ran})$, we derive
        \begin{equation*}
            A\tilde v^{(p)}(t) = E\tilde v^{(p+1)}(t) - f(t).
        \end{equation*}
        Together with \eqref{eq:1} and \eqref{eq:2} we conclude $E\tilde v(0) = \ldots = E \tilde v^{(p-1)} (0)=0$ and $E\tilde v^{(p)}(0)= Ex_0$. Thus, $x(t) = \tilde v^{(p)}(0)$ is a classical solution of \eqref{eq:dae-0}. \qedhere
    \end{enumerate}
\end{proof}

The condition $f=E\tilde f$ may appear somewhat restrictive at first.
However, one should keep in mind that we are already restricting the system to $X_{\ran}$ and $Z_{\ran}$.
Under the additional assumptions that $X=Z$ and that $\ran E$ is closed, we always have $Z_{\ran}\subseteq \ran E$.
Consequently, every $f \in L^1([0,\infty); Z_{\ran})$ lies in the range of $E$.

In what follows, we specify a condition ensuring the existence of such a function $\tilde f$.
\begin{prop}\label{prop:E-on-X_ran-is-bd-inv}\hfill\\
    Let $(E,A)$ satisfy Assumption \ref{assumption:1} and set $p:=p_{\mathrm{res}}^{(E,A)}+1$.
    If $E(X_{\ran})$ is closed and $X_{\ran} \cap \ker R_r(\mu)^p = \{0\}$, then $E\colon X_{\ran} \to Z_{\ran}$ is boundedly invertible. 
\end{prop}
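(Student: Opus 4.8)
The plan is to verify three things about the restriction $E|_{X_{\ran}}\colon X_{\ran}\to Z_{\ran}$: that it indeed maps into $Z_{\ran}$, that it is injective, and that it is surjective; the claim then follows from the bounded inverse theorem, since $X_{\ran}$ and $Z_{\ran}$ are closed subspaces of Banach spaces and hence Banach spaces themselves. The starting point is the elementary intertwining identity $ER_r(\mu)=R_l(\mu)E$, read off from $R_r(\mu)=(\mu E-A)\inv E$ and $R_l(\mu)=E(\mu E-A)\inv$; iterating gives $ER_r(\mu)^p=R_l(\mu)^pE$. Consequently $E(\ran R_r(\mu)^p)\subseteq\ran R_l(\mu)^p\subseteq Z_{\ran}$, and since $E$ is bounded and $Z_{\ran}$ is closed, $E(X_{\ran})=E(\overline{\ran R_r(\mu)^p})\subseteq Z_{\ran}$, so $E|_{X_{\ran}}$ is well defined. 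Injectivity is where the hypothesis $X_{\ran}\cap\ker R_r(\mu)^p=\{0\}$ enters: if $x\in X_{\ran}$ with $Ex=0$, then $R_r(\mu)x=(\mu E-A)\inv Ex=0$, so $x\in\ker R_r(\mu)\subseteq\ker R_r(\mu)^p$, whence $x\in X_{\ran}\cap\ker R_r(\mu)^p=\{0\}$.

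Granting for the moment that $E(X_{\ran})=Z_{\ran}$, the remaining work is soft: $E|_{X_{\ran}}$ is a bounded linear bijection between the Banach spaces $X_{\ran}$ and $E(X_{\ran})=Z_{\ran}$, and the bounded inverse theorem yields that its inverse is bounded, i.e.\ $E\colon X_{\ran}\to Z_{\ran}$ is boundedly invertible. The hypothesis that $E(X_{\ran})$ is closed is exactly what guarantees that $E(X_{\ran})$ is a Banach space, so that this theorem applies.

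The main obstacle is therefore the surjectivity $E(X_{\ran})=Z_{\ran}$. Here the naive attempt to solve $Ex=R_l(\mu)^pz$ with $x\in\ran R_r(\mu)^p$ fails, because pulling $E$ through $R_l(\mu)^p$ produces only a preimage in $\ran R_r(\mu)^{p-1}$, which need not lie in $X_{\ran}$. The way around this is to spend one extra power of the resolvent: for $z\in Z$ set $w:=R_r(\mu)^p(\mu E-A)\inv z\in\ran R_r(\mu)^p\subseteq X_{\ran}$, and compute, using the intertwining identity together with $E(\mu E-A)\inv=R_l(\mu)$, that $Ew=R_l(\mu)^pE(\mu E-A)\inv z=R_l(\mu)^{p+1}z$. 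Thus $\ran R_l(\mu)^{p+1}\subseteq E(X_{\ran})$. Since (as for $R_r$ in the proof of Lemma~\ref{lem:solutions-live-in-xran}, applied via the transformed pair of Corollary~\ref{cor:1}) the closures $\overline{\ran R_l(\mu)^k}$ stagnate at $k=p$, one has $\overline{\ran R_l(\mu)^{p+1}}=\overline{\ran R_l(\mu)^p}=Z_{\ran}$. Taking closures in $\ran R_l(\mu)^{p+1}\subseteq E(X_{\ran})$ and using that $E(X_{\ran})$ is closed gives $Z_{\ran}\subseteq E(X_{\ran})$, which together with the first paragraph yields $E(X_{\ran})=Z_{\ran}$ and completes the proof.
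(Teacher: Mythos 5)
Your proof is correct and takes essentially the same route as the paper's: the same intertwining identity $ER_r(\mu)^p = R_l(\mu)^p E$ for well-definedness and injectivity, the same device of spending one extra resolvent power to obtain $\ran R_l(\mu)^{p+1} \subseteq E(\ran R_r(\mu)^p) \subseteq E(X_{\ran})$, and then closure plus the open mapping theorem. The only difference is cosmetic: you state explicitly the stagnation $\overline{\ran R_l(\mu)^{p+1}} = \overline{\ran R_l(\mu)^{p}} = Z_{\ran}$, which the paper uses tacitly when passing to closures.
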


\begin{proof}
     Since $E(\ran R_r(\mu)^p)\subseteq \ran R_l(\mu)^p$, it is easy to see that $E(X_{\ran})\subseteq Z_{\ran}$. Now, let $z\in \ran R_l(\mu)^{p+1}$. Then there exists some $y\in Z$ with
     \begin{equation*}
         z=R_l(\mu)^{p+1}y = ER_r(\mu)^p (\mu E-A)\inv y.
     \end{equation*}
     Hence,
     \begin{equation*}
         \ran R_l(\mu)^{p+1}\subseteq E(\ran R_r(\mu)^p)\subseteq E(X_{\ran}),
     \end{equation*}
     and a~closure gives $Z_{\ran}\subseteq \overline{E(X_{\ran})} = E(X_{\ran})$. Therefore, $E\colon X_{\ran} \to Z_{\ran}$ is surjective. 
     Since $X_{\ran} \cap \ker R_r(\mu)^p = \{0\}$ and $\ker E\subseteq \ker R_r(\mu)^p$, $E\colon X_{\ran} \to Z_{\ran}$ is injective. Thus, the statement follows from the open mapping theorem.
\end{proof}

% \marginpar{TR: Ich verstehe beim besten Willen nicht, was damit gesagt werden möchte}By Proposition \ref{prop:E-on-X_ran-is-bd-inv} it is always possible to find a $\tilde f = E\inv(f)$, as long as $E(X_{\ran})$ is closed and $X_{\ran} \cap \ker R_r(\mu)^p=\{0\}$. The condition that $E(X_{\ran})$ is closed is not as hard to check, as long as $X_{\ran}$ is known. This alone gives the surjectivity of $E\colon X_{\ran} \to Z_{\ran}$. This might be enough to find a suitable function $\tilde f$, as long as the operator $E$ is given in a \textit{nice} way, as seen in the next example. 

\subsection{An Example with \texorpdfstring{$X\neq X_{\ran} \oplus X_{\ker}$}{}}\label{section:example}

In the following example, we highlight that the spaces $X$ and $Z$ do not always decompose into $X_{\ran}\oplus X_{\ker}$ and $Z_{\ran}\oplus Z_{\ker}$, respectively. Thus, it is not always possible to represent the solution of the inhomogeneous system \eqref{eq:inh-without-initial-value} using Proposition \ref{prop:sol-with-inh-in-ker} and Lemma \ref{lem:solution-equals-derivative-of-v}.

%\begin{ex}
%    In the following, we show that the space $X$ and $Z$ do not always split into a direct sum of $X_{\ran} = \overline{\ran (\lambda E-A)\inv E}$ and $X_{\ker} = \ker (\lambda E-A)\inv E$, and, respectively, $Z_{\ran}=\overline{\ran E(\lambda E-A)\inv }$ and $Z_{\ker} = \ker E(\lambda E-A)\inv$. To do this, 

We consider the following system
    \begin{align*}
        & & \tfrac{\partial}{\partial t}x_1(\xi, t) &= -\tfrac{\partial}{\partial \xi} x_1(\xi, t),  &\xi \in (0,1), t\geq 0,& & \\
        & & 0 &= -\tfrac{\partial}{\partial \xi}x_2(\xi,t),  &\xi \in (1,2), t\geq 0,& & \\ 
        & & x_1(0,t) &= 0,  & t\geq 0,& & \\
        & & x_1(1,t) &= x_2(1,t),  & t\geq 0.& &
    \end{align*}
    Next we rewrite this as an infinite-dimensional DAE. To this end, we introduce the evaluation operator at $\xi\in[a,b]$ by $\delta_\xi\in H^1(a,b)^\ast$.%  Similarly, we define $\delta_\xi\in H^1(1,2)^\ast$ for $\xi \in [1,2]$. 
 Then the above system is represented by a~DAE with operators $E\colon X\to Z$ and $A\colon \dom(A)\subseteq X\to Z$ with
    \begin{equation*}
        E=\begin{bmatrix}
            I & 0\\ 0 & 0\\ 0 & 0 \\ 0 & 0
        \end{bmatrix}, \qquad A=\begin{bmatrix}
            -\frac{\dd}{\dd \xi} & 0\\ 0 & -\frac{\dd}{\dd \xi}\\ -\delta_0 & 0\\ -\delta_1 & \delta_1
        \end{bmatrix},
    \end{equation*}
    for
    \begin{align*}
        \dom(A) &= H^1(0,1)\times H^1(1,2),\\
        X &= L^2(0,1)\times L^2(1,2), & Z&=L^2(0,1)\times L^2(1,2)\times \C^2.
    \end{align*}
    For $\lambda \in \C_{\Re \geq 0}$ we compute
    \begin{equation}\label{ex:equation1}
        (\lambda E-A)\inv \begin{bmatrix}
            f \\g \\ \mu_1 \\ \mu_2
        \end{bmatrix} = \begin{bmatrix}
            \displaystyle \mu_1 \e^{-\lambda \cdot} + \int_0^\cdot \e^{-\lambda(\cdot-\sigma)} f(\sigma)\dx[\sigma]\\
            \displaystyle -\mu_2 + \mu_1 \e^{-\lambda} + \int_0^1 \e^{-\lambda(1-\sigma)} f(\sigma)\dx[\sigma] + \int_1^\cdot g(\sigma)\dx[\sigma]
        \end{bmatrix}, \quad \begin{bmatrix}
            f \\g \\ \mu_1 \\ \mu_2
        \end{bmatrix} \in Z.
    \end{equation}
    Since the right-hand side of \eqref{ex:equation1} is bounded with respect to $\lambda$, the complex resolvent index $p_{\mathrm{res}}^{(E,A)}$ is at most $1$. 
    
    Next, we determine the nullspaces and the closure of the ranges of $(\lambda E-A)\inv E$ and $E(\lambda E-A)\inv$ in order to show that $X$ and $Z$ do not split into $X_{\ran}$, $X_{\ker}$ and $Z_{\ran}$, $Z_{\ker}$, respectively. 
    By \eqref{ex:equation1} we have
    \begin{equation}\label{ex:right-resolvent}
        (\lambda E-A)\inv E \begin{bmatrix}
            x_1 \\ x_2
        \end{bmatrix} = \begin{bmatrix}
            \displaystyle \int_0^\cdot \e^{-\lambda(\cdot-\sigma)}x_1(\sigma)\dx[\sigma]\\
            \displaystyle \int_0^1 \e^{-\lambda(1-\sigma)}x_1(\sigma)\dx[\sigma]
        \end{bmatrix}, \qquad \begin{bmatrix}
            x_1 \\x_2
        \end{bmatrix} \in X.
    \end{equation}
    Assume that $\int_0^\cdot \e^{-\lambda(\cdot-\sigma)}x_1(\sigma)\dx[\sigma] = 0$ in $L^2(0,1)$. By defining $x_1(t)=0$, $t\ge 1$, we obtain $x_1\in L^2(0,\infty)$ and $\e^{-\lambda \cdot}\ast x_1 =0$ in $ L^2(0,\infty)$. Applying the Laplace transform $\mathcal{L}$ to this equation yields  
    \begin{equation*}
        0 = \mathcal{L}(\e^{-\lambda \cdot} \ast x_1)(s) = \mathcal{L}(\e^{-\lambda \cdot})(s) \mathcal{L}(x_1)(s), \quad s\in \C_{\Re >0}.
    \end{equation*}
    Since $\mathcal{L}(\e^{-\lambda \cdot})(s)\neq 0$ for a suitable $\lambda$ and for all $s\in \C_{\Re >0}$ we deduce $\mathcal{L}(x_1)=0$ and, by the uniqueness of the Laplace transform, $x_1= 0$. Hence
    \begin{equation*}
        X_{\ker} = \ker (\lambda E-A)\inv E = \{0\} \times L^2(1,2).
    \end{equation*}
    
    Now, for simplicity we assume that $\lambda = 0$. This is possible as the kernels and ranges of the left- and right-resolvents are independent of the choice of $\lambda \in \rho(E,A)$ (see \cite[Lem.~2.1.2]{sviridyuk_linear_2003}). 
    Since 
    $$
    H^1_0(0,1) = \{ x \in L^2(0,1) \, \vert \, \exists x_1 \in L^2(0,1): x = \int_0^\cdot x_1(\sigma)\dx[\sigma] \; \text{and} \; x(0)=x(1)=0\},
    $$
    we observe $H^1_0(0,1)\times \{ 0 \} \subseteq \ran (\lambda E-A)\inv E$ and, since $H^1_0(0,1)$ is dense in $L^2(0,1)$,
    \begin{equation*}
        L^2(0,1)\times \{0\}\subseteq \overline{\ran (\lambda E-A)\inv E} = X_{\ran}.
    \end{equation*}
    Thus $X= X_{\ran} + X_{\ker}$. However, as the second entry of \eqref{ex:right-resolvent} is not empty, we see that
    $X_{\ran} \cap X_{\ker} \neq \{0\}$ meaning $X$ does not decompose into a direct sum of $X_{\ran}$ and $X_{\ker}$.
    
    Next, using \eqref{ex:equation1} we compute
    \begin{equation*}
        E(\lambda E-A)\inv \begin{bmatrix}
            f \\g \\ \mu_1 \\ \mu_2
        \end{bmatrix} = \begin{bmatrix}
            \displaystyle \mu_1 \e^{-\lambda \cdot} + \int_0^\cdot \e^{-\lambda(\cdot-r)} f(\sigma)\dx[\sigma]\\
            0 \\0 \\ 0
        \end{bmatrix}, \qquad \begin{bmatrix}
            f \\g \\ \mu_1 \\ \mu_2
        \end{bmatrix} \in Z.
    \end{equation*}
    To compute the kernel we assume $\mu_1 \e^{-\lambda \cdot} + \int_0^\cdot \e^{-\lambda(\cdot-\sigma)} f(\sigma)\dx[\sigma] = 0$ in $L^2(0,1)$. Inserting $t=0$ we deduce $\mu_1 = 0$ and $\int_0^\cdot \e^{-\lambda(\cdot-\sigma) f(\sigma)\dx[\sigma]} = \e^{-\lambda \cdot} \ast f = 0$. As already shown while computing the kernel of the right-resolvent, we deduce $f=0$. Thus, 
    \begin{equation*}
        \ker E (\lambda E-A)\inv = \{0\} \times L^2(1,2) \times \{0\} \times \C.
    \end{equation*}
    Further, similar to the steps before, one has
    \begin{equation*}
        Z_{\ran} = L^2(0,1)\times \{0\} \times \{0\} \times \{0\}.
    \end{equation*}
    Hence, in contrast to the space $X$, the spaces $Z_{\ran}$ and $Z_{\ker}$ are disjoint, but $Z_{\ran} \oplus Z_{\ker}\subsetneqq Z$.
    Note that it is not possible to apply Proposition \ref{prop:E-on-X_ran-is-bd-inv}, as $X_{\ran} \cap X_{\ker}$ is not empty. Since $E(X_{\ran}) = L^2(0,1)\times \{0\}^3$ is closed, it is still possible to find for every inhomogeneity $f$ on $Z_{\ran}$ a function $\tilde f\in L^2(0,1)\times \{0\}$, which is a subset of $X_{\ran}$, with $E\tilde f = f$. 

    Note that by choosing 
    \begin{equation*}
        E=\begin{bmatrix}
            I & 0\\ 0 & 0 \\ 0 & 0
        \end{bmatrix}, \qquad A=\begin{bmatrix}
            -\frac{\dd}{\dd \xi} & 0\\ 0 & -\frac{\dd}{\dd \xi}\\ -\delta_1 & \delta_1
        \end{bmatrix},
    \end{equation*}
    and $\dom(A)=\{(x_1, x_2)\in H^1(0,1)\times H^1(1,2) \, \vert \, x_1(0)=0\}$ one obtains $Z=Z_{\ran}\oplus Z_{\ker}$. 
    %Thus, it might be possible to decompose the spaces into $X=X_{\ran} \oplus X_{\ker}$ or $Z=Z_{\ran} \oplus Z_{\ker}$ by choosing appropriate operators. However, one should keep in mind that by doing so there is a risk of losing other desired operator properties.

\subsection{Hilbert space decomposition}\label{Section:hilbert-space-decomp}
In contrast to the preceding sections, we now assume that $X$ and $Z$ are complex Hilbert spaces.
It was shown in \cite{gernandt_pseudo-resolvent_2023} that these spaces can always be decomposed in such a way that the solutions of \eqref{eq:inh-without-initial-value} can be described explicitly for inhomogeneities lying outside $Z_{\ran}$.
This is possible whenever the spaces $\overline{\ran R_r(\mu)^k}$ and $\overline{\ran R_l(\mu)^k}$ stabilize for some $k \in \N_0$, which is guaranteed under the assumption of the existence of the complex resolvent index \cite[Prop.~5.1]{trostorff_semigroups_2020}.

To construct such a decomposition, we define
\begin{equation*}
    X_k\coloneqq \overline{\ran R_r(\mu)^k},\quad Z_k\coloneqq \overline{\ran R_l(\mu)^k}.
\end{equation*}
Starting with 
\begin{align*}
    X &= X_1 \oplus \ker (R_r(\mu))^\ast \eqqcolon X_1 \oplus W_{X,1},\\
    Z &= Z_1 \oplus \ker (R_l(\mu))^\ast \eqqcolon Z_1 \oplus W_{Z,1},
\end{align*}
we decompose the spaces $X_k$ and $Z_k$ by
\begin{align*}
    X_k &= X_{k+1} \oplus W_{X,k+1},\\ 
    Z_k &= Z_{k+1} \oplus W_{Z,k+1},\qquad k\in \{1,\ldots, p-2\},
\end{align*}
until the spaces $X_k$ and $Z_k$ stagnate. Here, $W_{X,k+1}$ and $W_{Z,k+1}$ denote the orthogonal complement of $X_{k+1}$ and $Z_{k+1}$ with respect to $X_k$ and $Z_k$, respectively. Thus, 
\begin{align}\label{eq:hilbert-space-decomposition}
    \begin{split}
        X &= X_{\ran} \oplus W_{X,p-1} \oplus W_{X,p-2} \oplus \ldots \oplus W_{X,1},\\
        Z &= Z_{\ran} \oplus W_{Z,p-1} \oplus W_{Z,p-2} \oplus \ldots \oplus W_{Z,1}.
    \end{split}
\end{align}
Let $\mu \in \rho(E,A)$. In \cite[Lem.~8.1]{gernandt_pseudo-resolvent_2023} it was shown that $x$ is a classical solution of \eqref{eq:inh-without-initial-value} with initial value $x_0$ if and only if $w_\mu=(A-\mu E)\e^{-\mu \cdot}x$ is a classical solution of 
\begin{align}\label{eq:left-resolvent-dae}
    \begin{cases}
        \frac{\dd}{\dd t} R_l(\mu) w_\mu(t) &= w_\mu(t) + \e^{-\mu t} f(t),\\
        R_l(\mu)w_\mu(0) &= R_l(\mu) (A-\mu E)x_0.
    \end{cases}
\end{align}
Additionally, in \cite[Sec.~5]{gernandt_pseudo-resolvent_2023} it was shown that $R_l(\mu)$ can be rewritten as
\begin{equation}\label{eq:left-resolvent-decomposition}
    R_l(\mu) = \begin{bmatrix}
        R_l(\mu)|_{Z_{\ran}} & R_l(\mu)|_{W_{Z, p-1}} & P_{Z_{\ran}} R_l(\mu)|_{W_{Z,p-2}} & \cdots & P_{X_{\ran}} R_l(\mu)|_{W_{Z,1}}\\
        0 & 0 & P_{W_{Z,p-1}} R_l(\mu)|_{W_{Z,p-2}} & \cdots & P_{W_{Z,p-1}}R_l(\mu)|_{W_{Z,1}}\\ 
        & \ddots & \ddots & \ddots & \vdots\\
        & & \ddots & \ddots &  P_{W_{Z,2}}R_l(\mu)|_{W_{Z,1}}\\
        & & & 0 & 0
    \end{bmatrix}
\end{equation}
by making use of the decomposition \eqref{eq:hilbert-space-decomposition}. We want to make use of this decomposition of $R_l(\mu)$ to analyse and represent solutions of \eqref{eq:inh-without-initial-value} with inhomogeneities on the whole space $Z$ and without needing further assumptions compared to \cite[Prop.~8.1]{gernandt_pseudo-resolvent_2023}. 
In the following, we focus on the decomposition of $Z$ in \eqref{eq:hilbert-space-decomposition} and rewrite $R_l(\mu)$. To do that, we write $W_k\coloneqq W_{Z,k}$, $k\in \{ 1,\ldots, p-1\}$. Of course the same procedure can be done for $R_r(\mu)$ on $X$.

By \eqref{eq:left-resolvent-decomposition} the DAE \eqref{eq:left-resolvent-dae} can be rewritten as
\begin{equation}\label{eq:left-resolvent-dae2}
    \frac{\dd}{\dd t} \begin{bsmallmatrix}
        R_l(\mu)|_{Z_{\ran}} & R_l(\mu)|_{W_{Z, p-1}} & P_{Z_{\ran}} R_l(\mu)|_{W_{Z,p-2}} & \cdots & P_{Z_{\ran}} R_l(\mu)|_{W_{Z,1}}\\
        0 & 0 & P_{W_{Z,p-1}} R_l(\mu)|_{W_{Z,p-2}} & \cdots & P_{W_{Z,p-1}}R_l(\mu)|_{W_{Z,1}}\\ 
        & \ddots & \ddots & \ddots & \vdots\\
        & & \ddots & \ddots &  P_{W_{Z,2}}R_l(\mu)|_{W_{Z,1}}\\
        & & & 0 & 0
    \end{bsmallmatrix} \begin{bmatrix}
        z_p \\ z_{p-1} \\ \vdots \\ z_2 \\ z_1 
    \end{bmatrix} = \begin{bmatrix}
        z_p \\ z_{p-1} \\ \vdots \\ z_2 \\ z_1 
    \end{bmatrix} + \begin{bmatrix}
        f_p \\ f_{p-1} \\ \vdots \\ f_2 \\ f_1 
    \end{bmatrix},
\end{equation}
as long as $f$ decomposes into $f_i \coloneqq P_{W_i}f$, $i\in \{ 1,\ldots,p-1\}$ and $f_p \coloneqq P_{Z_{\ran}}$.
In the proof of \cite[Prop.~8.3]{gernandt_pseudo-resolvent_2023} it was shown that one can solve $z_1,\ldots, z_{p-1}$ iteratively with $z_1 = -f_1$ and 
\begin{equation*}
    z_i = -f_i - \frac{\dd}{\dd t}\sum_{j=1}^{i-1} P_{W_{i-1}} R_l(\mu)\vert_{W_j} z_{i-1}, \quad i\in \{ 2,\ldots,p-1\},
\end{equation*}
if $f_i \in C^{p-i}([0,\infty); Z_{\ran})$ for $i\in \{1,\ldots,p-1\}$. Thus, \eqref{eq:left-resolvent-dae2} transforms into a DAE of the form
\begin{equation}\label{eq:DAE-after-decomposition}
    \frac{\dd}{\dd t} R_l(\mu)\vert_{Z_{\ran}} z_p = z_p + \hat f_p, \quad t\geq 0,
\end{equation}
where
\begin{equation*}
    \hat f_p = f_p + \frac{\dd}{\dd t} \sum_{j=1}^{p} P_{Z_{\ran}} R_l(\mu)\vert_{W_j} z_j.
\end{equation*}
Now, if $(R_l(\mu), I)$ fulfill the conditions of Proposition \ref{prop:E-on-X_ran-is-bd-inv}, one can find a $\tilde f_p$ such that $R_l(\mu) \tilde f_p = \hat f_p$.  
Thus, if 
\begin{equation}\label{eq:inh-for-hilber-space-case}
    \tilde v(t) = \int_0^t S_r(t-\tau) \tilde f_p(\tau)\dx[\tau], \quad t\geq 0,
\end{equation}
is $p+1$-times continuously differentiable, one can apply Lemma \ref{lem:solution-equals-derivative-of-v-part-2} to obtain a classical solution of \eqref{eq:left-resolvent-dae2}.
Note that the integrated semigroup $S_r$ inside the integral in \eqref{eq:inh-for-hilber-space-case} stands for the integrated semigroup generated by $(R_l(\mu), I)$, as we consider \eqref{eq:left-resolvent-dae2}. As a matter of fact, this coincides with the left-integrated semigroup generated by $(E,A)$.

If we now apply \cite[Lem.~8.1]{gernandt_pseudo-resolvent_2023} again, it is possible to transform the obtained solution of \eqref{eq:left-resolvent-dae2} back to a solution of \eqref{eq:inh-without-initial-value} and cover the case, where the inhomogeneity lies on the whole space $Z_{\ran}$.

\begin{ex}\hfill\\
    We return to the example from Section \ref{section:example}. Since $Z_{\ran} = \overline{\ran R_l(\mu)^1} = \overline{\ran R_l(\mu)^2} = L^2(0,1)\times \{0\}^3$ and $Z=Z_{\ran} \oplus W_1$ with $W_1= \{0\} \times L^2(0,1)\times \C \times \C$, one can decompose $R_l(\lambda)$ as seen above
    \begin{align*}
        R_l(\lambda) \begin{bmatrix}
            f \\ g \\ \mu_1 \\ \mu_2
        \end{bmatrix} 
        = R_l(\lambda)\vert_{Z_{\ran}} \begin{bmatrix}
            f \\ 0 \\ 0 \\ 0
        \end{bmatrix} + R_l(\lambda)\vert_{W_1}\begin{bmatrix}
            0 \\ g \\ \mu_1 \\ \mu_2
        \end{bmatrix} 
        = \begin{bmatrix}
            \int_0^\cdot\e^{-\lambda(\cdot-\sigma)}f(\sigma)\dx[\sigma] \\ 0 \\ 0 \\ 0
        \end{bmatrix} + \begin{bmatrix}
            \mu_1 \e^{-\lambda\cdot} \\ 0 \\ 0 \\ 0
        \end{bmatrix},
    \end{align*}
    for $\begin{bsmallmatrix}
        f & g & \mu_1 & \mu_2
    \end{bsmallmatrix}^\top\in Z$.
    Thus, for an inhomogeneity $h=\begin{bsmallmatrix}
        h_f & h_g & h_{\mu_1} & h_{\mu_2}
    \end{bsmallmatrix}^\top\in Z$ one can rewrite \eqref{eq:inh-without-initial-value} as it was done in \eqref{eq:DAE-after-decomposition}
    \begin{equation*}
        \frac{\dd}{\dd t} R_l(\lambda)\vert_{Z_{\ran}} \begin{bmatrix}
            f \\ 0 \\ 0 \\ 0
        \end{bmatrix} = \begin{bmatrix}
            f \\ 0 \\ 0 \\ 0
        \end{bmatrix} + \begin{bmatrix}
            h_f \\ 0 \\ 0 \\ 0
        \end{bmatrix} + \frac{\dd}{\dd t} R_l(\lambda)\vert_{W_1}\begin{bmatrix}
            0 \\ h_g \\ h_{\mu_1} \\ h_{\mu_2}
        \end{bmatrix} = \begin{bmatrix}
            f \\ 0 \\ 0 \\ 0
        \end{bmatrix} + \begin{bmatrix}
            h_f -\lambda h_{\mu_1} \e^{-\lambda\cdot} \\ 0 \\ 0 \\ 0
        \end{bmatrix},
    \end{equation*}
    or equivalently
    \begin{equation*}
        \frac{\dd}{\dd t} \int_0^t\e^{-\lambda(t-\sigma)}f(\sigma)\dx[\sigma] = f(t) + (h_f(t)-\lambda h_{\mu_1} \e^{-\lambda t}).
    \end{equation*}
\end{ex}

\section{Generation of a strongly continuous semigroup on a subspace} \label{section:semigroup-generation}

In this section, we show that the differential-algebraic integrated semigroups introduced in Section~\ref{section:solutions} generate a strongly continuous semigroup on a suitable subspace, analogous to \cite[Thm.~5.2]{neubrander_integrated_1988} for the abstract Cauchy problem. To address this, we extend the Assumption \ref{assumption:1} as follows.
\begin{assumption}\label{assumption:2}\hfill
    \begin{enumerate}[label=({\alph*)}]
        \item $X$ and $Z$ are Banach spaces.
        \item $E$ is a bounded linear operator from $X$ to $Z$.
        \item $A\colon\dom(A)\subseteq X\to Z$ is closed and densely defined.
        \item The operator pair $(E,A)$ has a \textit{complex resolvent index}, i.e.~the smallest number $p_{\mathrm{res}}^{(E,A)} \in \N_0$, such that there exists a $\omega \in\R$, $C>0$ with $\C_{\Re>\omega}\subseteq \rho(E,A)$ and 
        \begin{equation*}
            \Vert (\lambda E-A)^{-1} \Vert \leq C\vert\lambda\vert^{p_{\mathrm{res}}^{(E,A)}-1}\quad \text{for all }\lambda\in\C_{\Re>\omega}.
        \end{equation*}
        \item\label{assumption:2-e} The spaces $X_{\ran}$ and $\ker E$ intersect trivially, i.e.~$X_{\ran}\cap\ker E = \{0\}$.
    \end{enumerate}
\end{assumption}
The Assumption \ref{assumption:2}\ref{assumption:2-e} does not hold in general. It is, for example, satisfied in the presence of the radiality index \cite{erbay_index_2024, sviridyuk_linear_2003} or the resolvent growth condition $(D_k)$ introduced in \cite{gernandt_pseudo-resolvent_2023}, since these conditions imply $X_{\ran} \cap \ker R_r(\mu)^p = \{0\}$ and hence $X_{\ran} \cap \ker E = \{0\}$. 
Later, we focus on a specific class of DAEs which, when rewritten appropriately, always satisfy this condition.
Before stating the main result, we begin with the following necessary lemma and, in accordance with the notation of Section~\ref{section:solutions}, we set $p := p_{\mathrm{res}}^{(E,A)} + 1$.
\begin{lem}\label{lem:ker-and-X_ran-disjoint}\hfill\\
    Let $(E,A)$ satisfy Assumption \ref{assumption:2}, set $p:=p_{\mathrm{res}}^{(E,A)}+1$ and let $(S_l(t))_{t\geq 0}$ and $(S_r(t))_{t\geq 0}$  be the $p$-times integrated semigroups of $(E,A)$. Let $x\in X_{\ran}$. If $S_r(t)x=0$ for all $t\geq 0$, then $x=0$.  
\end{lem}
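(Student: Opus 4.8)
The plan is to exploit the Laplace-transform representation of the right integrated semigroup established in Theorem~\ref{thm:1}, which turns the pointwise vanishing of $S_r(\cdot)x$ into an algebraic statement about the right resolvent. Concretely, since $x\in X_{\ran}$, formula~\eqref{eq:rechts-res} applies and reads
\begin{equation*}
    R_r(\lambda)x = \lambda^p \int_0^\infty \e^{-\lambda t} S_r(t)x \dx[t]
    \qquad\text{for all } \lambda\in\rho(E,A)\cap\C_{\Re>\omega}.
\end{equation*}
First I would observe that the hypothesis $S_r(t)x=0$ for every $t\geq 0$ makes the integrand identically zero, so the right-hand side vanishes and hence $R_r(\lambda)x=0$ for all admissible $\lambda$. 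The set $\rho(E,A)\cap\C_{\Re>\omega}$ is non-empty (it contains a right half-plane by Assumption~\ref{assumption:2}), so this conclusion is genuinely available; note that no deep uniqueness theorem for the Laplace transform is needed here, only the trivial direction that a vanishing function has vanishing transform.

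Next I would unwind the definition $R_r(\lambda)=(\lambda E-A)\inv E$. Fixing one such $\lambda$, the identity $R_r(\lambda)x=0$ becomes $(\lambda E-A)\inv Ex=0$. Since $\lambda\in\rho(E,A)$, the operator $(\lambda E-A)\inv\in L(Z,X)$ is the inverse of the bijection $\lambda E-A\colon\dom(A)\to Z$ and is in particular injective. Applying $\lambda E-A$ from the left (equivalently, using injectivity of $(\lambda E-A)\inv$) yields $Ex=0$, i.e.~$x\in\ker E$.

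Finally I would combine $x\in\ker E$ with the standing assumption $x\in X_{\ran}$ and invoke Assumption~\ref{assumption:2}\ref{assumption:2-e}, which states precisely that $X_{\ran}\cap\ker E=\{0\}$; this forces $x=0$ and completes the argument. I do not anticipate a genuine obstacle in this proof: the only points requiring care are the verification that the representation~\eqref{eq:rechts-res} is applicable (which holds because $x\in X_{\ran}$ and $\lambda$ lies in the relevant half-plane) and the injectivity of $(\lambda E-A)\inv$, both of which are immediate from the setup. The essential content is the transversality condition in Assumption~\ref{assumption:2}\ref{assumption:2-e}, which is exactly what converts the resolvent vanishing into $x=0$.
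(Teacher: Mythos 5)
Your proof is correct, but it reaches the key identity $Ex=0$ by a different mechanism than the paper. You work in the frequency domain: you plug the hypothesis into the Laplace representation of Theorem~\ref{thm:1}, obtain $R_r(\lambda)x=(\lambda E-A)\inv Ex=0$ on a half-plane, and then use injectivity of $(\lambda E-A)\inv$ to extract $Ex=0$. The paper instead stays entirely in the time domain: it invokes Lemma~\ref{lem:properties-integrated-semigroup}~\ref{lem:properties-integrated-semigroup-d}, which for $x\in X_{\ran}$ gives
\begin{equation*}
    A\int_0^t S_r(\tau)x\dx[\tau] = ES_r(t)x - \frac{t^p}{p!}Ex,
\end{equation*}
so that vanishing of $S_r(\cdot)x$ immediately forces $\frac{t^p}{p!}Ex=0$ for all $t$, hence $Ex=0$. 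Both arguments then finish identically by citing Assumption~\ref{assumption:2}\ref{assumption:2-e}. The trade-off is minor: your route needs only the representation formula~\eqref{eq:rechts-res} (valid on all of $X_{\ran}$) plus the elementary injectivity of the generalised resolvent, and it avoids the machinery of Lemma~\ref{lem:properties-integrated-semigroup}; the paper's route avoids any mention of the Laplace transform and of resolvent injectivity, using instead a pointwise-in-$t$ identity that was already established. Since Lemma~\ref{lem:properties-integrated-semigroup}~\ref{lem:properties-integrated-semigroup-d} is itself derived from the resolvent representation, the two proofs rest on the same foundations, and yours is a legitimate, self-contained alternative.
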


\begin{proof}
    By Lemma \ref{lem:properties-integrated-semigroup} \ref{lem:properties-integrated-semigroup-d} one obtains
    \begin{equation}\label{eq:lem-proof}
        0 = A\int_0^t S_r(\tau)x\dx[\tau] - ES_r(t)x = -\frac{t^p}{p!} Ex, \quad t\geq 0,
    \end{equation}
    and, especially, $Ex=0$.
    Since $X_{\ran} \cap \ker E = \{0\}$, the assertion follows.
\end{proof}

The next result is a generalization of \cite[Thm.~5.2]{neubrander_integrated_1988}. The main difficulty lies in omitting the fact that here $R_r(\mu)$ is a pseudo-resolvent, which can not be written as a resolvent on a subspace without further assumptions.
To ease the notation in the following, we denote for $k\in \N$ the $k$-th integral of $t\mapsto S_r(t)x$ for $x\in X_{\ran}$ by $S_r^{[k]}(t)x\coloneqq \int_0^t \frac{(t-v)^{k-1}}{(k-1)!} S_r(v)\dx[v]$. This exists due to Lemma \ref{lem:continuity-of-int-semigroup}. Further, we define $m\in \N_0$
\begin{equation}
    C_m\coloneqq \{ x\in X_{\ran} \; | \; t\mapsto S_r(t)x\in C^{m}([0,\infty); X_{\ran})\},\label{eq:Cmdef}
\end{equation}
where $C^{m}([0,\infty); X_{\ran})$ denotes the space of $m$-times continuously differentiable functions with values in $X_{\ran}$. We write $S_r^{(p)}(t)x$ to denote the $p$-th derivative of $t\mapsto S_r^{(p)}(t)x$ for $x\in C_p$.

\begin{thm}\label{thm:semigroup-on-a-subspace}\hfill\\
    Let $(E,A)$ satisfy Assumption \ref{assumption:2}, set $p:=p_{\mathrm{res}}^{(E,A)}+1$ and let $(S_l(t))_{t\geq 0}$ and $(S_r(t))_{t\geq 0}$  be the $p$-times integrated semigroups of $(E,A)$, with $\Vert S_r(t)\Vert\leq C\e^{\omega t}$ and $\Vert S_l(t)\Vert\leq C\e^{\omega t}$. 
    Then, for $C_m$ as in \eqref{eq:Cmdef},
    \begin{enumerate}[label=({\alph*)}]
        \item $C_{2p}\subseteq \ran R_r(\mu)^p \subseteq C_p$.
        \item $F\coloneqq \overline{C_{2p}}^{\Vert \cdot\Vert_F}\subseteq C_p$ is a Hilbert space, where
        \begin{equation*}
            \Vert x_0\Vert_{F}\coloneqq \sup_{t\geq 0} \Vert \e^{-\omega t} S_r^{(p)}(t)x_0\Vert
        \end{equation*}
        defines a norm on $\ran R_r(\mu)^p$.
        \item $(S_r^{(p)}(t))_{t\geq 0}$ is a strongly continuous semigroup on $F$. 
    \end{enumerate}
\end{thm}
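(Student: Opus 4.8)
The guiding idea is that for $x\in\ran R_r(\mu)^p$ the orbit $t\mapsto S_r^{(p)}(t)x$ is exactly the mild solution of \eqref{eq:dae} with initial value $x$, so the whole statement becomes a statement about solutions, and Assumption \ref{assumption:2}\ref{assumption:2-e} is what fixes the initial data of the relevant derivatives. Concretely, I would first differentiate Lemma \ref{lem:properties-integrated-semigroup}\ref{lem:properties-integrated-semigroup-d} repeatedly, using closedness of $A$, to obtain for every $x\in C_p$ the identities
\begin{equation*}
  AS_r^{(k-1)}(t)x=ES_r^{(k)}(t)x-\frac{t^{\,p-k}}{(p-k)!}Ex,\qquad k=1,\dots,p,
\end{equation*}
together with $S_r^{(k-1)}(t)x\in\dom(A)$. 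Evaluating at $t=0$ and writing $y_j:=S_r^{(j)}(0)x\in X_{\ran}$ (so $y_0=0$), these become $Ey_1=0$, $Ey_k=Ay_{k-1}$ ($2\le k\le p-1$) and $Ey_p=Ay_{p-1}+Ex$. Since every $y_j$ lies in $X_{\ran}$ and $X_{\ran}\cap\ker E=\{0\}$, an induction yields $y_1=\dots=y_{p-1}=0$ and then $E(y_p-x)=0$, so
\begin{equation*}
  S_r^{(j)}(0)x=0\ (0\le j\le p-1),\qquad S_r^{(p)}(0)x=x .
\end{equation*}
In particular $u:=S_r^{(p)}(\cdot)x$ is a mild solution of \eqref{eq:dae} with $u(0)=x$.

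These boundary values settle part (a). The inclusion $\ran R_r(\mu)^p\subseteq C_p$ holds because the solution formula of Theorem \ref{thm:solutions-complex-resolvent-index} has an integrand decaying like $|\lambda|^{-2}$, hence represents a continuous function, and $S_r(\cdot)x$ is its $p$-fold primitive, so $S_r(\cdot)x\in C^p$. For $C_{2p}\subseteq\ran R_r(\mu)^p$ I would use that $x\in C_{2p}$ makes $u$ of class $C^p$ and a classical solution; any classical solution $v\in C^1$ of \eqref{eq:dae} satisfies $v=R_r(\mu)(\mu v-v')$ (rearranging $(\mu E-A)v=E(\mu v-v')$), and $\mu v-v'$ is again a classical solution, so iterating $p$ times yields $x=u(0)\in\ran R_r(\mu)^p$.

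For (b) I would check that $\|\cdot\|_F$ is a norm on $\ran R_r(\mu)^p$: it is finite because $u$ is exponentially bounded, and definite by Lemma \ref{lem:ker-and-X_ran-disjoint}, since $\|x\|_F=0$ forces $S_r^{(p)}(\cdot)x\equiv0$, whence $S_r(\cdot)x\equiv0$ (integrating with vanishing initial data) and $x=0$. Evaluating the supremum at $t=0$ gives $\|x\|\le\|x\|_F$. As $\|\cdot\|_F$ also dominates $\sup_{t\in[0,T]}\|S_r^{(k)}(t)\cdot\|$ for each $k\le p$ (because $S_r^{(k)}(\cdot)x$ is an explicit iterated integral of $S_r^{(p)}(\cdot)x$), every $\|\cdot\|_F$-Cauchy sequence in $C_{2p}$ converges in $X_{\ran}$ to some $\hat x$, and $S_r(\cdot)x_n$ together with its first $p$ derivatives converge uniformly on compacta; hence $\hat x\in C_p$ with $\|x_n-\hat x\|_F\to0$, realizing the completion $F$ isometrically inside $C_p$ and showing it is complete. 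For the semigroup law in (c) I would argue by uniqueness: for $x\in C_{2p}$, $u(\cdot+s)$ is the mild solution with initial value $u(s)$, and applying the factorization above to this $C^p$ translate shows $u(s)\in C_{2p}\subseteq\ran R_r(\mu)^p$, so $S_r^{(p)}(t)S_r^{(p)}(s)x=u(t+s)=S_r^{(p)}(t+s)x$. The same translation gives $\|S_r^{(p)}(t)x\|_F=\sup_{s\ge0}\e^{-\omega s}\|u(s+t)\|\le\e^{\omega t}\|x\|_F$, so each $S_r^{(p)}(t)$ maps $C_{2p}$ into itself and extends to a bounded operator on $F$ with $\|S_r^{(p)}(t)\|_F\le\e^{\omega t}$; combined with $S_r^{(p)}(0)=\Id$ the semigroup identities pass to $F$ by density.

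I expect strong continuity to be the main obstacle. By the uniform bound it suffices to show $\|S_r^{(p)}(h)x-x\|_F\to0$ as $h\to0^+$ for $x$ in the dense set $C_{2p}$, i.e.\ $\sup_{s\ge0}\e^{-\omega s}\|u(s+h)-u(s)\|\to0$. On compact $s$-intervals this is just the uniform continuity of $u$; the delicate part is the tail $s\to\infty$, where exponential boundedness alone yields only a bound of size $\|x\|_F$. The resolution is to choose the exponent $\omega$ in the norm strictly above the growth abscissa of the solutions: by Theorem \ref{thm:solutions-complex-resolvent-index} the mild solution obeys $\|u(s)\|\le C_0\e^{\omega_0 s}$ with $\omega_0<\omega$, so $\e^{-\omega s}\|u(s)\|\to0$ and the tail is controlled uniformly for small $h$. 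Combining the compact and tail estimates gives strong continuity on $C_{2p}$, and then on $F$ by an $\varepsilon/3$-argument using $\|S_r^{(p)}(t)\|_F\le\e^{\omega t}$; since this extension coincides with the genuine $S_r^{(p)}(t)$ on $F\subseteq C_p$, it exhibits $(S_r^{(p)}(t))_{t\ge0}$ as a strongly continuous semigroup on $F$.
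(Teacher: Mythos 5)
Most of your proposal is sound, and in several places it takes a genuinely different route from the paper. Your derivation of the boundary values $S_r^{(j)}(0)x=0$ ($0\le j\le p-1$) and $S_r^{(p)}(0)x=x$ for $x\in C_p$, by differentiating Lemma \ref{lem:properties-integrated-semigroup} \ref{lem:properties-integrated-semigroup-d} and using $X_{\ran}\cap\ker E=\{0\}$ inductively, replaces the paper's route through the composition formula (Lemma \ref{lem:properties-integrated-semigroup} \ref{lem:properties-integrated-semigroup-f}) and Lemma \ref{lem:ker-and-X_ran-disjoint}, and is arguably cleaner. Your proof of $C_{2p}\subseteq\ran R_r(\mu)^p$ via the factorization $v=R_r(\mu)(\mu v-v')$ iterated $p$ times through classical solutions (each iterate stays classical because the previous one is $C^2$, and closedness of $A$ upgrades a $C^1$ mild solution to a classical one) is a solution-theoretic substitute for the paper's identity $R_r(\mu)\left(\mu I-S_r^{(p+1)}(0)\right)=I$ on $C_m$ and $R_r(\mu)C_{m-1}=C_m$; likewise your semigroup law via uniqueness of mild solutions (Theorem \ref{thm:solutions-complex-resolvent-index}) replaces the paper's differentiation of \eqref{eq:thm-proof-2}. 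Part (b) is handled essentially as in the paper.

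The genuine gap is in the strong continuity argument of part (c). You control the tail of $\sup_{s\geq 0}\e^{-\omega s}\Vert u(s+h)-u(s)\Vert$ by demanding that the norm exponent $\omega$ lie \emph{strictly} above the growth abscissa of the mild solutions, i.e.\ $\Vert u(s)\Vert\le C_0\e^{\omega_0 s}$ with $\omega_0<\omega$. But $\omega$ is not yours to choose: it is fixed in the hypotheses as any constant satisfying $\Vert S_r(t)\Vert\le C\e^{\omega t}$, and the norm $\Vert\cdot\Vert_F$, hence the space $F$ itself, is built from that $\omega$. Nothing in the assumptions provides strict slack. For instance, with $X=Z=\C$, $E=1$, $A=\omega>0$ one has $p=1$, $S_r(t)x=\tfrac{\e^{\omega t}-1}{\omega}x$, so the pair $(C,\omega)$ with $C=1/\omega$ is admissible in the hypothesis, while the solutions $\e^{\omega t}x_0$ grow with abscissa exactly $\omega$; no $\omega_0<\omega$ exists, your tail estimate is unavailable, and yet the conclusion of the theorem holds for this $\omega$. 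Replacing $\omega$ by a larger exponent changes $\Vert\cdot\Vert_F$ (the norms for different exponents are not equivalent in general) and therefore proves a statement about a different space $F$, not the theorem as stated.

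The missing idea is the paper's commutation trick, which makes the tail estimate unnecessary. Write $x=R_r(\mu)^p z$ with $z\in C_p$ (available since $C_{2p}=R_r(\mu)^pC_p$, or from your factorization), use $S_r^{(p)}(t)R_r(\mu)^p z=R_r(\mu)^p S_r^{(p)}(t)z$, and invoke the time-uniform solution estimate \eqref{eq:estimate-2}, which says precisely that $\Vert R_r(\mu)^p w\Vert_F\le C\Vert w\Vert$ for $w\in X_{\ran}$. Then
\begin{equation*}
    \Vert S_r^{(p)}(t)x-x\Vert_F=\bigl\Vert R_r(\mu)^p\bigl(S_r^{(p)}(t)z-z\bigr)\bigr\Vert_F\le C\,\Vert S_r^{(p)}(t)z-z\Vert\longrightarrow 0,\qquad t\to 0^+,
\end{equation*}
because $z\in C_p$ makes $t\mapsto S_r^{(p)}(t)z$ continuous in the $X$-norm with value $z$ at $t=0$. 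This reduces $F$-norm continuity (a supremum over all times) to pointwise continuity at $t=0$, and it only needs the estimate \eqref{eq:estimate-2} with the \emph{same} $\omega$ (non-strict), which is how the constants are set up in Theorem \ref{thm:1}. Your density/$\varepsilon/3$ extension to all of $F$ is then fine as written.
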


\begin{proof}    
    Let $m\in \N_0$ and $x_0 \in C_m$. By Lemma \ref{lem:properties-integrated-semigroup} \ref{lem:properties-integrated-semigroup-f} one has
    \begin{equation}\label{eq:langer-beweis-1}
        S_r(t)S_r(s)x_0 = \int_0^t \frac{(t-v)^{p-1}}{(p-1)!} S_r(v+s)x_0 \dx[v] - \sum_{j=0}^{p-1} \frac{s^j}{j!}S_r^{[p-1-j]}(t)x_0.
    \end{equation}
    Thus, $S_r(s)x_0 \in C_{m+1}$ and consequently $S_r(s)C_m\subseteq C_{m+1}$. Let $k \in \N_0$ with $k\leq p-1$ and let $m\geq p$. Then, using \eqref{eq:langer-beweis-1} and the continuity of $S_r(t)$ from $X_{\ran}$ to $X_{\ran}$, one shows inductively
    \begin{align}\label{eq:thm-proof-1}
        \begin{split}
            S_r(t) S_r^{(k)}(s)x_0 & = \int_0^t \frac{(t-v)^{p-1-k}}{(p-1-k)!} S_r(\tau+s) x_0 \dx[\tau] - \sum_{j=0}^{k-1} \frac{t^{p-1-j}}{(p-1-j)!} S_r^{(k-1-j)}(s)x_0 \\
            & \quad -\sum_{j=k}^{p-1} \frac{s^{j-k}}{(j-k)!} S_r^{[p-1-j]}(t)x_0.
        \end{split}
    \end{align}
    Thus $S_r^{(k)}(s) C_m\subseteq C_{m+1}$ and $S_r(t)S_r^{(k)}(0) = 0$ for all $1\leq k \leq p-1$. Since $X_{\ran} \cap \ker E =\{0\}$, one can use Lemma \ref{lem:ker-and-X_ran-disjoint} to obtain $S_r^{(k)}(0)=0$ on $C_k$ for all $0\leq k \leq p-1$. Differentiating \eqref{eq:thm-proof-1} with respect to $s$ once more, one derives
    \begin{equation}\label{eq:thm-proof-2}
        S_r(t) S_r^{(p)}(s)x_0 = S_r(t+s)x_0 - \sum_{j=0}^{p-1} \frac{t^j}{j!} S_r^{(j)}(s)x_0
    \end{equation}
    and, thus, $S_r^{(p)}(s) C_m \subseteq C_m$ for all $p\leq m$. Rewriting \eqref{eq:thm-proof-2}, one obtains
    \begin{equation*}
        S_r(t) \left(  S_r^{(p)}(0) - I_{C_p} \right) = 0
    \end{equation*}
    and $\frac{\dd^p}{\dd s^p} S_r(0) = I_{C_p}$ on $C_p$ by Lemma \ref{lem:ker-and-X_ran-disjoint}, where $I_{C_p}$ denotes the identity on $C_p$. Taking the $p$-th derivative with respect to $t$ from \eqref{eq:thm-proof-2} one obtains
    \begin{equation*}
        S_r^{(p)}(t) S_r^{(p)}(s) = S_r^{(p)}(t+s)
    \end{equation*}
    on $C_m$, $m\geq p$. Thus, $(S_r(t))_{t\geq 0}$ is a strongly continuous semigroup on $C_m$ for all $m\geq p$.
    Now, let $x_0 \in C_m$, $m\geq p+1$, and take the derivative of \eqref{eq:thm-proof-2} along $s$
    \begin{equation*}
        S_r(t) S_r^{(p+1)}(s)x_0 = S_r^{(1)}(t+s)x_0 - \sum_{j=0}^{p-1}\frac{t^j}{j!} S_r^{(j+1)}(s)x_0.
    \end{equation*}
    Thus, $S_r^{(p+1)}(s)C_m\subseteq C_{m-1}$, for all $m\geq p+1$, and 
    \begin{equation*}
        S_r(t)S_r^{(p+1)}(0)x_0 = S_r^{(1)}(t)x_0 - \frac{t^{p-1}}{(p-1)!} x_0,
    \end{equation*}
    as $S_r^{(p)}(0)=I_{C_p}$. Hence, by integration by parts,
    \begin{align*}
        R_r(\mu) S_r^{(p+1)}(0)x_0 &= \mu^p \int_0^\infty \e^{-\mu t} S_r(t) S_r^{(p+1)}(0)x_0 \dx[t] \\
        &= \mu^p \int_0^\infty \e^{-\mu t}S_r^{(1)}(t)x_0 - \frac{t^{p-1}}{(p-1)!} x_0 \dx[t]\\
        &= \mu R_r(\mu)x_0 - x_0.
    \end{align*}
Consequently, one obtains
    \begin{equation*}
        R_r(\mu) \left( \mu I_{C_m} - S_r^{(p+1)}(0)\right) = I_{C_m}
    \end{equation*}
    on $C_m$ and $C_m\subseteq R_r(\mu) C_{m-1}$ for $m\geq p+1$. Since $S_r(t)C_m\subseteq C_{m+1}$ and $R_r(\mu) = \mu^p \int_0^\infty \e^{-\mu t} S_r(t)\dx[t]$ one has $R_r(\mu)C_m \subseteq C_{m+1}$ for all $m\geq 0$.
    Combining these two, one obtains
    \begin{equation}\label{eq:resolvent-on-C_m}
        R_r(\mu) C_{m-1} = C_m, \quad m\geq p+1
    \end{equation}
    and $C_{2p} = R_r(\mu) C_{2p-1} = \ldots = R_r(\mu)^p C_p \subseteq \ran R_r(\mu)^p\vert_{X_{\ran}}$.
    By Theorem \ref{thm:solutions-complex-resolvent-index} there exists for all $x_0\in \ran R_r(\mu)^p$ a $z_0\in X$ with $x_0=R_r(\mu)^p z_0$ and a mild solution $x(\cdot)$ with
    \begin{equation}\label{eq:estimate-2}
        \Vert S_r^{(p)}(t)x_0\Vert =\Vert x(t)\Vert \leq C \e^{\omega t} \Vert z_0\Vert, \quad t\geq 0,
    \end{equation}
    and, in particular, $\ran R_r(\mu)^p\subseteq C_p$ holds, which shows the first assertion. Note that we used the fact that $S_r(\cdot)x_0$ is the $p$-th integral of the mild solution $x(\cdot)$.
    
    To address the second assertion, we define on $\ran R_r(\mu)^p$ the norm 
    \begin{equation*}
        \Vert x_0\Vert_F\coloneqq \sup_{t\geq 0} \Vert \e^{-\omega t} S_r^{(p)}(t) x_0\Vert
    \end{equation*}
    and denote the closure of $C_{2p}$ with respect to $\Vert \cdot\Vert$ by $F$. By invoking $S_r^{(p)}(0)x_0 = x_0$, we have\begin{equation}\label{eq:norm-estimate}
        \Vert x\Vert \leq \Vert x\Vert_F.
    \end{equation}
    Let $(x_n)_n\subseteq C_{2p}$ with $x_n \to x\in F$ with respect to $\Vert \cdot \Vert_F$. By \eqref{eq:norm-estimate} $x_n$ converges to $x$ in $X_{\ran}$ and, since $S_r(t)$ is bounded, $S_r(t)x_n$ converges to $S_r(t)x$. Since $x_n\in C_{2p}$ the functions $t\mapsto S_r(t)x_n$ are at least $p$-times continuously differentiable. Using the semigroup property of $(S_r^{(p)}(t))_{t\geq 0}$ on $C_{2p}$, one has
    \begin{equation}\label{eq:estimate-3}
        \Vert S_r^{(p)}(s)x_n\Vert_F = \sup_{t\geq 0} \Vert \e^{-\omega t} S_r^{(p)}(t+s)x_n \Vert \leq \e^{\omega s} \Vert x_n\Vert_F.
    \end{equation}
    Together with \eqref{eq:norm-estimate} one obtains
    \begin{equation}\label{eq:cauchy-sequence-estimate}
        \Vert S_r^{(p)}(s)x_n - S_r^{(p)}(s)x_m\Vert\leq \e^{\omega s}\Vert x_n -x_m\Vert_F,
    \end{equation}
    and since $S_r^{(k)}(0)x_n=0$ for all $1\leq k \leq p-1$ and $n\in \N$, $t\mapsto S_r(t)x$ is $p$-times continuously differentiable. Thus, $x\in C_p$ and $F\subseteq C_p$.

    Now, let $x\in F\subseteq C_p$. Then $S_r^{(p)}(t)x$ is well-defined. Let $(x_n)_n\subseteq C_{2p}$ be a sequence converging to $x$ with respect to $\Vert \cdot\Vert_F$. Then by \eqref{eq:cauchy-sequence-estimate} and \eqref{eq:norm-estimate} $(S_r^{(p)}(t)x_n)_n$ is a Cauchy sequence in $F$ and in $X_{\ran}$. Thus, $S_r^{(p)}(t)x_n$ converges to some $g \in F$ in $\Vert \cdot\Vert_F$ and $S_r^{(p)}(t)x$ in $X_{\ran}$. Thus, $S_r^{(p)}(t)x = g\in F$. Consequently, $S_r^{(p)}(t)\colon F\to F$ is well-defined for all $t\geq 0$. Since $(S_r^{(p)}(t))_{t\geq 0}$ is a semigroup on $C_{p}$, it is easy to see that it is a semigroup on $F$ as well, as $F$ is the closure of $C_{2p}(\subseteq C_p)$. 
    
    What is left to show is that $(S_r^{(p)}(t))_{t\geq 0}$ is strongly continuous. 
    Let $x\in C_{2p}$. Since $C_{2p}\subseteq \ran R_r(\mu)^p\vert_{C_p}$ there exist $z\in C_p$ with $x=R_r(\mu)^pz$. Similar to Lemma \ref{lem:properties-integrated-semigroup} \ref{lem:properties-integrated-semigroup-a} one can show that $S_r^{(p)}(t) R_r(\mu)^p z = R_r(\mu)^p S_r^{(p)}(t)z$ holds. Then \eqref{eq:estimate-2} leads to
    \begin{equation*}
        \Vert R_r(\mu)^p z\Vert_F = \sup_{t\geq 0} \Vert \e^{-\omega t} S_r^{(p)}(t)R_r(\mu)^p z_0 \Vert \leq C \Vert z_0 \Vert,
    \end{equation*}
    and thus, by using that $(S_r^{(p)}(t))_{t\geq 0}$ is a strongly continuous semigroup on $C_p$,
    \begin{align*}
        \Vert S_r^{(p)}(t) x_0 - x_0 \Vert_F &= \Vert S_r^{(p)}(t) R_r(\mu)^p z_0 - R_r(\mu)^p z_0 \Vert_F\\
        &= \Vert R_r(\mu)^p \left( S_r^{(p)}(t)z_0 - z_0\right)\Vert_F\\
        &\leq \Vert S_r^{(p)}(t)z_0 - z_0 \Vert_F\\
        &\to 0, \qquad t\to 0.
    \end{align*}
     Since $C_{2p}$ is dense in $F$, it follows from \eqref{eq:estimate-3} that $\Vert S_r^{(p)}(t) x-x\Vert_F\to 0$ for $t\to 0$ holds for all $x\in F$. Thus, $(S_r^{(p)}(t))_{t\geq 0}$ is strongly continuous on $F$.
\end{proof}

Finally, let us place some emphasis on the class of \textit{abstract dissipative Hamiltonian DAE}
\begin{equation}\label{eq:adH-DAE-1}
    \tfrac{\dd}{\dd t} Ex(t) = DQx(t), \quad t\geq 0.
\end{equation}
Hereby, $E, Q\in L(X,Z)$, such that $Q$ is boundedly invertible, $E$ has  closed range, and $E^\ast Q = Q^\ast E$ is self-adjoint and nonnegative. Further, $A\colon \dom(A)\subset Z\to Z$ is closed, densely defined and maximally dissipative. Note that the finite-dimensional counterpart of these equations has been treated in \cite{beattie_linear_2018}.

%In this instance, we call DAEs of the form
%\begin{equation}\label{eq:adH-DAE-1}
%    \frac{d}{dt} Ex(t) = AQx(t), \quad t\geq 0,
%\end{equation}
%\textit{abstract dissipative Hamiltonian DAE} (short \textit{adH-DAE}) or \textit{port-Hamiltonian DAE} (short \textit{pH-DAE}).
%Recall that an operator $T\subseteq \dom(T)\subseteq Z \to Z$ is called \textit{nonnegative}, if $\langle Tz,z\rangle_Z\geq 0$ for all $z\in \dom(T)$.

Using that $Q$ is boundedly invertible and $E^\ast Q = Q^\ast E$ is nonnegative, it follows that $EQ\inv = Q\ainv E^\ast$ is nonnegative \cite[Rem.~4.1 b)]{erbay_jacob_morris24}. 
Thus, by a~multiplication of \eqref{eq:adH-DAE-1} from the left with $Q\inv$, we obtain 
an~abstract dissipative Hamiltonian DAE with $X=Z$, $Q=I_Z$ and $E$ being self-adjoint and nonnegative. 
As done in in Corollary \ref{cor:1}, we may focus on the (not necessarily abstract dissipative Hamiltonian) DAE 
\begin{equation}\label{eq:DAE-left-resolvent}
    \frac{\dd }{\dd t} \underbrace{E(\mu E-A)\inv}_{\eqqcolon \tilde E} z(t) = \underbrace{A(\mu E-A)\inv }_{\eqqcolon \tilde A} z(t), \qquad t\geq 0.
\end{equation}
%ote that $(E,A)$ being an adH-DAE does not imply that \eqref{eq:DAE-left-resolvent} is one as well. 
Then the left- and right-integrated semigroups generated by $(\tilde E,\tilde A)$, 
denoted by $(\tilde S_r(t))_{t\geq 0}$ and $(\tilde S_l(t))_{t\geq 0}$, 
both coincide with the left-integrated semigroup generated by $(E,A)$. Thus we may write
\begin{equation*}
    (\tilde S(t))_{t\geq 0} \coloneqq (\tilde S_r(t))_{t\geq 0} 
      = (\tilde S_l(t))_{t\geq 0} = (S_l(t))_{t\geq 0}.
\end{equation*}
Moreover, the spaces $X_{\ran}$ and $Z_{\ran}$ associated with $(\tilde E,\tilde A)$ 
both agree with $Z_{\ran}$ associated with $(E,A)$.
 By \cite[Prop.~7.3\&Thm.~5.1]{gernandt_pseudo-resolvent_2023} we have
\begin{equation}\label{eq:Z_ran-and-Z_ker-disjoint}
    Z_{\ran} \cap \ker R_l(\mu)^p = \{0\}
\end{equation}
and, in particular $Z_{\ran} \cap \ker \tilde E=\{0\}$. Thus it is possible to apply Theorem \ref{thm:semigroup-on-a-subspace} to $(\tilde E, \tilde A)$ and obtain a strongly continuous semigroup on a subspace. 
Further, in \cite[Thm.~5.1]{gernandt_pseudo-resolvent_2023} it was shown that there exist an operator $A_R\colon \dom(A_R)\subseteq Z_{\ran} \to Z_{\ran}$ with $\dom(A_r)=R_l(\mu)(Z_{\ran})$ and
\begin{equation*}
    (\lambda I -A_R)\inv = R_l(\mu)\vert_{Z_{\ran}}.
\end{equation*}
Furthermore, it was shown in \cite[Prop.~8.1]{gernandt_pseudo-resolvent_2023} that if $A_R$ generates a $C_0$-semigroup, then \eqref{eq:DAE-left-resolvent} has weak solutions on the whole subspace $Z_{\ran}$.
In the setting of Theorem~\ref{thm:semigroup-on-a-subspace}, the operator $A_R$ is 
precisely the generator of $(S_r^{(p)}(t))_{t\geq 0}$ and coincides with
\begin{equation*}
    S_r^{(p+1)}(0)\colon C_{p+1}\subseteq F \to F
\end{equation*}
on $C_{p+1}$. Thus, Theorem~\ref{thm:semigroup-on-a-subspace} highlights the role 
of $A_R$ in cases where it fails to generate a strongly continuous semigroup. 
%--- a situation often overlooked in the analysis of DAEs.

\section*{Acknowledgement}
The authors would like to thank Hannes Gernandt, who drew our attention to the Hilbert space decomposition which is used in Section \ref{Section:hilbert-space-decomp}.\\
The authors gratefully acknowledge funding from the Deutsche Forschungsgemeinschaft (DFG, German Research Foundation), Project-ID 531152215,
CRC 1701 “Port-Hamiltonian Systems”.

% \section*{Funding}
% Please insert a funding statement (if applicable) here.

% \section*{Data Availability Statement}
% No new data were created or analysed during this study. Data sharing is not applicable to this article.

% \section*{Underlying and related material}
% No underlying or related material.

% \section*{Author contributions}
% All authors contributed equally.

% \section*{Competing interests}
% No competing interests to declare.
\bibliographystyle{abbrv}
\bibliography{references}
%\printbibliography

\end{document}